\newtheorem{theoremn}{Theorem}[section]
\newtheorem{lemman}[theoremn]{Lemma}
\newtheorem{propositionn}[theoremn]{Proposition}
\newtheorem{corollaryn}[theoremn]{Corollary}
\newtheorem*{theorem*}{Theorem}
\newtheorem*{lemma*}{Lemma}
\theoremstyle{definition}
\newtheorem{definition}[theoremn]{Definition}
\newtheorem{remark}[theoremn]{Remark}
\newtheorem{example}[theoremn]{Example}
\newtheoremstyle{definition*}
{\topsep}
{\topsep}
{}
{0pt}
{\bfseries}
{.}
{ }
{\thmname{#1}\thmnumber{ #2}\thmnote{ (#3)}}
\theoremstyle{definition*}
\newtheorem*{definition*}{Definition}
\newtheorem*{remark*}{Remark}
\newtheorem*{claim*}{Claim}
\newtheorem*{corollary*}{Corollary}
\begin{document}
\title[Commuting Graph of a group action with few edges]{Commuting Graph of
a Group Action\\
with few edges}
\author{\.{I}sma\.{I}l \c{S}. G\"{u}lo\u{g}lu}
\address{\.{I}sma\.{I}l \c{S}. G\"{u}lo\u{g}lu, Department of Mathematics, Do%
	\u{g}u\c{s} University, Istanbul, Turkey}
\email{iguloglu@dogus.edu.tr}
\author{G\"{u}l\.{I}n Ercan$^{*}$}
\address{G\"{u}l\.{I}n Ercan, Department of Mathematics, Middle East
	Technical University, Ankara, Turkey}
\email{ercan@metu.edu.tr}
\thanks{$^{*}$Corresponding author}
\subjclass[2000]{20D10, 20D15, 20D45}
\keywords{commuting graph, group action, friendship graph, $\mathcal{F}$-graph}
\maketitle

\begin{abstract}
Let $A$ be a group acting by automorphisms on the group $G.$ \textit{The
commuting graph $\Gamma(G,A)$ of $A$-orbits} of this action is the simple
graph with vertex set $\{x^{A} : 1\ne x \in G \}$, the set of all $A$-orbits
on $G\setminus \{1\}$, where two distinct vertices $x^{A}$ and $y^{A}$ are
joined by an edge if and only if there exist $x_{1}\in x^{A}$ and $y_{1}\in
y^{A}$ such that $[x_{1},y_{1}]=1$. The present paper characterizes the
groups $G$ for which $\Gamma(G,A)$ is an $\mathcal{F}$-graph, that is, a
connected graph which contains at most one vertex whose degree is not less
than three.
\end{abstract}

\baselineskip12pt


\section{introduction}

Throughout this paper group means a finite group. A great deal is known on
deriving information about the structure of a group $G$ from some certain
properties of an associated graph. The commuting graph of a group has been
one of the most popular amongst such graphs; and several results indicating
the influence of the commutativity relation on the structure of a group have
been obtained. As a generalization, the commuting graph of conjugacy classes
has been introduced and analyzed in \cite{Lo}. Recently, focusing attention
on a further generalization, the concept of the commuting graph of a group
action has been introduced in \cite{IG} as follows.

\begin{definition}
Let $A$ be a group acting by automorphisms on the group $G.$ \textit{The
commuting graph $\Gamma(G,A)$ of $A$-orbits} is the graph with vertex set $%
\{x^{A} : 1\ne x \in G \}$, the set of all $A$-orbits on $G\setminus \{1\}$,
where two distinct vertices $x^{A}$ and $y^{A}$ are joined by an edge if and
only if there exist $x_{1}\in x^{A}$ and $y_{1}\in y^{A}$ such that $x_{1}$
and $y_{1}$ commute.
\end{definition}

In \cite{IG} the connectedness of $\Gamma(G,A)$ has been studied and the
structure of the group $G$ has been investigated in cases where $\Gamma(G,A)$
is complete or triangle free or contains a complete vertex or contains an
isolated vertex. We would like to mention here only the fact that $G$ is nilpotent if 
$\Gamma(G,A)$ is complete  (see Theorem 3.1 in 
\cite{IG}) due to its appearance in the next sections.

Of course, for further study on $\Gamma(G,A)$ several problems can be
suggested. In the present article, as a matter of taste, we want to
characterize all groups $G$ admitting a group $A$ of automorphisms so that $%
\Gamma(G,A)$ has few edges. More precisely we fix the notion of being a
graph ``having few edges" as being an $\mathcal{F}$-graph in the following
sense.

\begin{definition}
A finite simple graph $\Gamma$ is an \textbf{$\mathcal{F}$-graph} if it is
connected and contains at most one vertex whose degree is not less than
three. If an $\mathcal{F}$-graph $\Gamma$ contains a vertex $v$ with more
than two neighbours then $v$ is uniquely determined by this property and is
called \textit{\ \textbf{the singular vertex}} of $\Gamma.$
\end{definition}

All path-graphs $P_{n}$ and cycles $C_{n}$ with $n$ vertices are $\mathcal{F}
$-graphs. Friendship graphs and starlike trees are
special $\mathcal{F}$-graphs.

The purpose of the present paper is to describe the groups $G$ which has a
group $A$ of automorphisms such that $\Gamma(G,A)$ is an $\mathcal{F}$%
-graph. The main result we obtained is as follows.\newline

\begin{theorem*}\label{Theorem}
Let $A$ be a group acting by automorphisms on the group $G$ with $\left\vert
\pi(G)\right\vert \geq2$ so that the commuting graph $\Gamma(G,A)$ of $A$%
-orbits is an $\mathcal{F}$-graph. Then either $\Gamma(G,A)$ does not
possess a singular vertex and $G=P\times Q$ where $P$ and $Q$ are elementary
abelian $p$- and $q$-groups for some distinct primes $p $ and $q$,
respectively, and $P\setminus \{1\},Q\setminus \{1\}$ and $G\setminus (P\cup
Q)$ are all the distinct $A$-orbits; or $\Gamma(G,A)$ does have a uniquely
determined singular vertex $z^A$. Then $|z|$ is a prime $p$ and one of the
following holds:\newline

$(1)$ $O_{p}(G)=1$ and $p=2$ and $G=F(G)S$ where $S\in Syl_2(G)$ and $F(G)$
is elementary abelian. Moreover one of the following
holds where $H=GA$ and $\overline{H}=H/C_{H}(F(G))$.

\begin{enumerate}
\item[(a)]  $S\cong D_{8}$, $\left\vert \overline{H}/\overline{G}\right\vert
=q-1 $ and $\left\vert F(G)\right\vert =q^{2}$ where $q\in \{3,7\}$.
\item[(b)] $S\cong Q_{8}\ast D_{8}$, the extraspecial group of order $32$
with $20$ elements of order $4$, $\left\vert F(G)\right\vert =3^{4}$, and $%
\overline{H}/\overline{G}$ is a Frobenius group of order $10$ or $20.$
\end{enumerate}

$(2)$ $1\neq O_{p}(G)=F(G)$ and $C_{G}(F(G))\leq F(G)$ and for $\overline{G}%
=G/F(G)$ one of the following holds:

\begin{enumerate}
\item[(a)] $\overline{G}$ has a normal subgroup of prime order which is
complemented by a (possibly trivial) cyclic $p$-group of order dividing $%
p^{2}$ and acting Frobeniusly on it.

\item[(b)] $\overline{G}\cong SL(2,4)$,$\ Z(G)=z^{A}\cup\{1\},\; 1\neq
F(G)/Z(G)$ is a direct sum of natural irreducible modules of $\overline{G}.$
Furthermore $G$ does not split over $Z(G)$.

\item[(c)] $\overline{G}\cong SL(2,4),\; p=5,\; F(G)$ is elementary abelian, 
$F(G)\setminus \{1\}$ is the union of two $A$-orbits, all $p$-elements in $%
G\setminus F(G)$ lie in the same $A$-orbit, $z^A\cap Z(S)=\phi $ for any $%
S\in Syl_p(G) $, and $A/C_{A}(\overline{G})$ is isomorphic to $S_{4}$ or $%
S_{5}$.
\end{enumerate}

$(3)$ $1\neq O_{p}(G)=F(G)$ and $C_{G}(F(G))\not \leq F(G)$ where either $G$
is a quasisimple group which is isomorphic to one of $SL(2,5),\; 2PSL(3,4),\;
2^{2}PSL(3,4)$, or $G=O_{p}(G)\times E(G)$ with $O_{p}(G)=\langle
z^{A}\rangle $ and $E(G)\cong PSL(2,5).$ \

$(4)$ $1\neq O_{p}(G)\neq F(G)$ such that $F(G)=P\times Q$ where $P=O_{p}(G)$
is an elementary abelian $p$-group, $Q=O_{p^{\prime}}(F(G))$ is a Sylow $q$%
-subgroup of $G$ for another prime $q$ which is elementary abelian, $G/P$ is
a Frobenius group with kernel $F(G)/P$ and complement either of prime order
or a $p$-group which has a unique subgroup of order $p $. Furthermore both $%
P\setminus \{1\}$ and $Q\setminus \{1\}$ are $A$-orbits.
\end{theorem*}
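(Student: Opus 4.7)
The plan is to split into two broad cases based on whether $\Gamma(G,A)$ has a singular vertex, and in the singular case to reduce first to $|z|$ being prime and then to run a four-way case analysis based on the configuration of $O_{p}(G)$, $F(G)$ and $C_{G}(F(G))$.

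In the no-singular case every vertex has degree at most two, so $\Gamma(G,A)$ is a path or a cycle. The first reductions are structural: a third prime in $\pi(G)$ or a second $A$-orbit inside some $P\setminus\{1\}$ produces three pairwise-commuting orbits whose mutual adjacencies (together with direct-product commutativity to a Sylow $q$-subgroup) create a vertex of degree at least three. Once $|\pi(G)|=2$ and each of $P\setminus\{1\}$, $Q\setminus\{1\}$ is a single orbit, the nilpotence criterion (Theorem~3.1 of \cite{IG}) applied to the induced clique on these orbits plus any mixed orbit forces $G=P\times Q$ with $P,Q$ elementary abelian, and a final orbit count on $G\setminus(P\cup Q)$ delivers exactly three vertices.

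For the singular vertex case, the first reduction is that $|z|$ is prime. If $|z|$ had two distinct prime divisors $p,q$, then the prime-power parts $(z_{p})^{A},(z_{q})^{A}$ are distinct neighbours of $z^{A}$, and any third neighbour $w^{A}$ required to make $z^{A}$ singular is also adjacent to $(z_{p})^{A}$ and $(z_{q})^{A}$ (because $w$ commuting with a conjugate of $z$ commutes with that conjugate's prime-power parts), pushing those orbits to degree at least three and contradicting uniqueness. An analogous cascade using $(z^{p})^{A}$ rules out $|z|=p^{a}$ with $a\ge 2$. I would then classify by the triple $(O_{p}(G),F(G),C_{G}(F(G)))$ as in the four subcases of the theorem. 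The strategy in each is the same: pin the singular vertex inside a canonical subgroup (typically $Z(S)$ for $S\in\mathrm{Syl}_{p}(G)$ or $F(G)$), translate the constraint that every other vertex have degree at most two into rigid restrictions on the number of $A$-orbits in each Sylow subgroup and on the action of $G/F(G)$ on $F(G)$, and then identify the resulting group using module-theoretic and characteristic-subgroup methods. In case (1) this singles out $S\cong D_{8}$ or $S\cong Q_{8}\ast D_{8}$ via Hering-type classifications of groups acting with few orbits on a faithful module over $\mathbb{F}_{q}$. In case (2) the faithful action of $G/F(G)$ on $F(G)$ combined with the $\mathcal{F}$-property collapses $G/F(G)$ either to a small Frobenius extension of a subgroup of prime order or to $SL(2,4)$ on a natural module; the non-split case (2)(b) is isolated by an $H^{2}$ computation. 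In case (3) the presence of a quasi-simple component $E(G)$ with very few classes of $p$-elements restricts $E(G)$, by Atlas inspection, to $SL(2,5)$, $2.PSL(3,4)$, $2^{2}.PSL(3,4)$, or a $PSL(2,5)$ direct factor. In case (4) the Frobenius structure of $G/P$ with kernel $F(G)/P$ is forced by requiring both $P\setminus\{1\}$ and $Q\setminus\{1\}$ to be single $A$-orbits, and the fixed-point-free action of the complement determines its shape.

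The main obstacle will be the identification work for quasi-simple groups in case (3) and the modular-representation analysis underlying cases (1) and (2)(b,c): one must translate orbit-degree bookkeeping on $\Gamma(G,A)$ into a precise statement about the number of $\overline{G}$-orbits on a small faithful module and on conjugacy classes of prime-order elements in $\overline{G}$, then match against the very short list of groups meeting such rigid counts. Eliminating spurious quasi-simple candidates and spurious $SL(2,4)$-modules, and handling the non-split extension in (2)(b), will require careful cohomological work. Throughout, Theorem~3.1 of \cite{IG} is the workhorse that converts local clique information inside $\Gamma(G,A)$ back into nilpotence of the associated subgroups.
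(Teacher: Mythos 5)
Your architecture coincides with the paper's: the same dichotomy on the existence of a singular vertex, the same reduction of $|z|$ to a prime via the observation that $(z^{p})^{A}$ inherits every neighbour of $z^{A}$, the same four-way split governed by $O_{p}(G)$, $F(G)$ and $C_{G}(F(G))$, and the same use of the Huppert--Hering classification of transitive linear groups in the case $O_{p}(G)=1$ (where transitivity of $\overline{H}$ on $F(G)\setminus\{1\}$, not merely ``few orbits'', is what is actually available and needed). So the plan is the right plan. What is missing is the content, and three of the omissions are not routine. First, the engine that terminates every nonsolvable branch is never identified: the paper proves that any $A$-invariant section avoiding $z^{A}$ is a group all of whose elements have prime power order, and then runs everything through the Delgado--Wu classification of such Eppo-groups; without that theorem your ``very short list of groups meeting such rigid counts'' and the ``Atlas inspection'' in case (3) have no finite list to inspect, and the individual eliminations of $PSL(2,8)$, $PSL(2,17)$, $Sz(8)$, $Sz(32)$, $SL(2,7)$, $SL(2,9)$, $M_{10}$ (which occupy several lemmas, some using Walter's classification of simple groups with abelian Sylow $2$-subgroups and facts about Schur multipliers) constitute a large fraction of the proof.

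Second, the non-splitting assertion in (2)(b) cannot be ``isolated by an $H^{2}$ computation.'' The claim is that the particular group $G$ forced by the hypothesis does not split over $Z(G)$; no computation of $H^{2}(G/Z(G),Z(G))$ decides whether a given extension is the split one. The paper instead argues that a complement would equal $G'$, hence be $A$-invariant, and would contain an element $h$ of order $4$, so that $h^{A},(h^{2})^{A},(hz)^{A},(h^{2}z)^{A}$ span a $K_{4}$ in $\Gamma$ --- a graph-theoretic contradiction. Third, the local lemmas that make the ``orbit-degree bookkeeping'' possible are exactly where the leverage lies and are deferred wholesale: that for $q\neq p$ every Sylow $q$-subgroup has exponent $q$ with its nonidentity elements in a single $A$-orbit (whence every $A$-invariant subgroup has index or order a power of $p$); that $z^{A}$ is a normal subset of $G$; that $z\in Z(S)$ for some $S\in Syl_{p}(G)$ forces $z^{A}\subseteq O_{p}(G)$; and that centralizers of $q$-elements are $\{p,q\}$-groups. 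These are what convert the degree condition into the structural statements in each of the four cases; as written, the proposal is a faithful outline of the paper's strategy rather than a proof of the theorem.
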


This paper is divided into five sections. Section 2 investigates the
structure of the group $G$ when $\Gamma (G,A)$ contains no singular vertex.
Section 3 is devoted to the completion of the proof of the theorem above in
case $\Gamma (G,A)$ contains a singular vertex. This section is divided into
four subsections the first of which includes several basic observations
arising from the existence of a singular vertex to most of which we appeal
frequently throughout the rest of the paper, while the second presents some
critical groups $G$ for which $\Gamma (G,A)$ is not an $\mathcal{F}$-graph
for any $A\leq Aut(G)$. The other subsections study the cases $O_{p}(G)=1$
and $O_{p}(G)\neq 1$ separately and contain several examples. The paper
ends with Section 4 including some final remarks on some immediate
consequences of the Theorem\ref{Theorem}.

Before closing this introduction we want to remind the reader of two basic well-known facts which we shall use repeatedly without explicit reference:

	\begin{enumerate}
		\item If $A$ is a noncyclic, elementary abelian group acting coprimely by automorphisms
		 on the group $G$ then $G=\left\langle C_{G}(a):1\neq a\in
		A\right\rangle .$
		
		\item If $A=FH$ is a Frobenius group with kernel $F$ and complement $H,$ acting
		on the group $G$ by automorphisms so that $C_G(F)=1,$ then
		$C_{G}(H)\neq1.$
	\end{enumerate}

For all the properties of the simple groups appearing in this paper we refer
to Atlas \cite{Atlas} without mentioning it explicitly.

Throughout, for any graph $\Lambda$ with vertex set $V,$ the induced graph
on $W\subset V$ will be denoted by $\Lambda[W]$. If $W=\{v_{1},\ldots,v_{n}%
\} $ we simply write $\Lambda[v_{1},\ldots,v_{n}]$ rather than $\Lambda[%
\{v_{1},\ldots,v_{n}\}]$ for $\Lambda[W]$.

To simplify the notation, throughout $\Gamma $ will denote the commuting
graph $\Gamma (G,A)$ of $A$-orbits of the finite group $G.$

\section{\textbf{When $\Gamma$ is an $\mathcal{F}$-graph without singular
vertex}}

We begin by examining the structure of $G$ in case where $\Gamma$ is an $%
\mathcal{F}$-graph with no singular vertex.

\begin{propositionn}
\label{Propo 1} Suppose that $\Gamma$ has no singular vertex. Then

\begin{enumerate}
\item[(i)] $\Gamma$ is either $P_{n}$ with $n\leq 3$ or $C_{3} $;

\item[(ii)] Either $G$ is a $p$-group for some prime $p$, or $\Gamma=C_{3}$
and $G=P\times Q$ where $P$ and $Q$ are elementary abelian $p$- and $q$%
-groups for some distinct primes $p $ and $q$, respectively. Furthermore, $%
P\setminus \{1\},Q\setminus \{1\}$ and $G\setminus (P\cup Q)$ are all the
distinct $A$-orbits.
\end{enumerate}
\end{propositionn}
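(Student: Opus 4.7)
The plan is to combine a simple graph-theoretic skeleton with two group-theoretic inputs: a central element of a Sylow subgroup forces many adjacencies, and any commuting pair of coprime-order elements automatically generates a triangle in $\Gamma$. Since $\Gamma$ is connected with every vertex of degree at most~$2$, it must be either a path $P_{n}$ or a cycle $C_{n}$. First I would dispose of the case $|\pi(G)|=1$: then $G$ is a $p$-group, so $Z(G)\neq 1$, and for any $z\in Z(G)\setminus\{1\}$ the orbit $z^{A}$ is adjacent to every other vertex. Together with the degree bound this forces $\Gamma$ to have at most three vertices, giving $\Gamma\in\{P_{1},P_{2},P_{3},C_{3}\}$.

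Next I would show that if $|\pi(G)|\geq 2$ then $\Gamma$ necessarily contains a triangle, which among paths and cycles forces $\Gamma=C_{3}$. Suppose $\Gamma$ is triangle-free. If some $p$-element $x$ and some $q$-element $y$ commute for distinct primes $p,q$, then the orbits $x^{A}$, $y^{A}$, $(xy)^{A}$ are pairwise distinct (the representatives have orders $p^{\ast}$, $q^{\ast}$, $p^{\ast}q^{\ast}$) and pairwise adjacent, contradicting triangle-freeness. Hence no $p$-element commutes with any $q$-element; decomposing any element into its commuting prime-power parts, every nontrivial element of $G$ must have prime-power order. Consequently orbits split according to the prime dividing the common order of their elements, and no $p$-orbit is adjacent to a $q$-orbit. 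With $|\pi(G)|\geq 2$ this makes $\Gamma$ disconnected, contradicting the $\mathcal{F}$-graph hypothesis; so $\Gamma=C_{3}$, completing~(i).

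For~(ii), in the remaining case $|\pi(G)|\geq 2$ we have $\Gamma=C_{3}=K_{3}$, whence $G$ is nilpotent by Theorem~3.1 of \cite{IG} and $G=\prod_{i}P_{i}$. If $|\pi(G)|\geq 3$, products of nontrivial Sylow elements over singletons and pairs of distinct primes already yield at least $\binom{3}{1}+\binom{3}{2}=6$ orbits of pairwise distinct orders, which is impossible. Hence $|\pi(G)|=2$ and $G=P\times Q$, and the three nonempty orbit types (pure $p$-, pure $q$-, mixed) each form a single $A$-orbit. To conclude that $P$ is elementary abelian: the elements of the orbit $P\setminus\{1\}$ share a common order, which must be $p$ (otherwise the $p$-th power of an element of order $\geq p^{2}$ would lie in the same orbit with smaller order); and since $Z(P)$ is characteristic in $P$, hence $A$-invariant, and meets $P\setminus\{1\}$ nontrivially, the $A$-transitivity on $P\setminus\{1\}$ forces $Z(P)=P$. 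The symmetric argument handles $Q$. The principal obstacle is the triangle-reduction step of~(i): once one notices that a commuting coprime pair automatically builds a triangle, the absence of triangles forces every element to be of prime-power order, and connectedness of $\Gamma$ then collapses $\pi(G)$ to a single prime.
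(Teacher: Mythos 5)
Your proof is correct and follows essentially the same route as the paper: a central element bounds the vertex count in the $p$-group case, a commuting coprime pair forces a triangle (hence $\Gamma=C_3$) when $|\pi(G)|\geq 2$, and then nilpotency via Theorem~3.1 of \cite{IG} plus the three-vertex count pins down $G=P\times Q$ with each Sylow elementary abelian. The only cosmetic difference is that you derive the contradiction in the prime-power-order case by observing the graph decomposes by prime and is therefore disconnected, whereas the paper argues that connectedness would force an adjacency between coprime-order vertices; these are the same argument read in opposite directions.
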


\begin{proof}
$(i)$ An $\mathcal{F}$-graph with no singular vertex is a connected graph in
which every vertex is of degree at most $2$ and hence it is either $P_{n}$
for some positive integer $n$ or $C_{m}$ with $m\geq3$. If $G$ is a $p$%
-group for a prime $p$ then any vertex $x^{A}$ for some $x\in Z(G)$ is a
complete vertex with $\deg(x^{A})\leq2$ which yields that $\left\vert
V(\Gamma)\right\vert \leq3$ proving the claim. Assume next that $\left\vert
\pi(G)\right\vert \geq2$ and every element is of prime power order. As the
graph is connected there must exist two adjacent vertices $x^{A}$ and $y^{A}$
represented by elements of coprime orders which is impossible. Finally
assume that there exists $x\in G$ the order of which is divisible by two
distinct primes. Then $x^{A}$ is a vertex of a subgraph isomorphic to $C_{3}$
whence $\Gamma=C_{3}$ which completes the proof of $(i)$. \medskip

$(ii)$ Assume that $\left\vert \pi(G)\right\vert \geq2$. An argument in the
proof of $(i)$ shows that $\Gamma= C_{3}$ is a complete graph. It follows by
Theorem 3.1 of \cite{IG} that the group $G$ is nilpotent. Let $p$ and $q$ be
two distinct elements of $\pi(G)$; and $x$ and $y$ be elements of $G$ of
orders $p$ and $q$, respectively. Since $x$ and $y$ commute it holds that $%
\Gamma[x^{A} ,y^{A},(xy)^{A}]=C_{3}=\Gamma$, that is, $V(\Gamma)=\{x^{A}
,y^{A},(xy)^{A}\}$. In particular, $\pi(G)=\{p,q\}$, $x^{A}$ is the set of
elements of $G$ of order $p$, $y^{A}$ is the set of elements of order $q$
and all the other nonidentity elements of $G$ are of order $pq$ forming the
orbit $(xy)^{A}$.

Let $P$ be the Sylow $p$-subgroup of $G.$ Clearly $P\setminus \{1\}=x^{A}$.
We may assume that $x\in \Omega _{1}(Z(P))$. It follows that $P$ is
elementary abelian. Similarly the Sylow $q$-subgroup $Q$ of $G$ is
elementary abelian and $y^{A}=Q\setminus \{1\}$. Then the set $\{uv:u\in
P\setminus \{1\},v\in Q\setminus \{1\}\}=(xy)^{A}$ consists of elements of $%
G $ of order $pq$. This establishes $(ii)$.
\end{proof}

\begin{remark}
Observe that for any two distinct prime numbers $p$ and $q$ and any
elementary abelian $p$-group $P$ and any elementary abelian $q$-group $Q$
there exists a subgroup $A$ of $Aut(P\times Q)$ such that $\Gamma (P\times
Q,A)=C_{3}.$
\end{remark}

\section{\textbf{When $\Gamma $ is an $\mathcal{F}$-graph with singular
vertex}}

The following precise description of Eppo-groups (or $CP$-groups by some
authors), namely, groups in which every element is of prime power order will
be frequently used in the rest of the paper.

\begin{theoremn}[Main Theorem of \protect\cite{Del}]
\label{Theo 1} One of the following holds for any Eppo-group $E$ with $%
\left\vert \pi(E)\right\vert \geq2$:

\begin{enumerate}
\item[(a)] $E$ is a Frobenius group with $\left\vert \pi(E)\right\vert =2;$

\item[(b)] $E$ is a $2$-Frobenius group with $\left\vert \pi(E)\right\vert
=2;$

\item[(c)] $E$ is isomorphic to one of the following groups:\newline
$PSL(2,q)$ for $q\in\{5,7,8,9,17\}$, $PSL(3,4)$, $Sz(8)$, $Sz(32)$, $%
M_{10}$;

\item[(d)] $O_{2}(E)\neq1$ and $E/O_{2}(E)$ is isomorphic to one of the
following groups: \newline
$PSL(2,q)$ for $q\in\{5,8\}$,  $Sz(8)$, $Sz(32).$\newline
Furthermore $O_{2}(E)$ is isomorphic to a direct sum of natural irreducible
modules for $E/O_{2}(E).$
\end{enumerate}
\end{theoremn}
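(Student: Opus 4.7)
The plan is to split into the solvable and non-solvable cases, with the latter requiring CFSG. For the solvable case, let $E$ be a solvable Eppo-group with $|\pi(E)| \geq 2$. For any two primes $p, q \in \pi(E)$, consider a Hall $\{p,q\}$-subgroup $H$ of $E$, which is itself Eppo. If $P \in Syl_p(H)$ and $Q \in Syl_q(H)$, then no non-trivial element of $P$ commutes with a non-trivial element of $Q$, since such a commuting pair would yield an element of order $pq$. This forces one of the Sylow subgroups to be normal and the complement to act fixed-point-freely, so $H$ is Frobenius. A standard argument via Sylow fusion and coprime action then shows $|\pi(E)| = 2$, and an analysis of the Fitting series of $E$ produces either the Frobenius structure of (a), or the $2$-Frobenius structure of (b).

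For the non-solvable case with $O_2(E) = 1$, I would appeal to the classification of finite simple groups. Every non-abelian composition factor of $E$ must itself be Eppo; inspecting the simple groups through ATLAS shows that the non-abelian simple Eppo-groups are exactly $PSL(2,q)$ for $q \in \{5,7,8,9,17\}$, $PSL(3,4)$, $Sz(8)$, and $Sz(32)$. I would then check which near-simple extensions preserve the Eppo property by examining the action of outer automorphisms on the spectrum; the only proper such extension is $M_{10}$, the non-split extension of $A_6 \cong PSL(2,9)$ by $C_2$, yielding the list in (c).

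The remaining case is non-solvable $E$ with $V := O_2(E) \neq 1$. Set $\overline{E} = E/V$, which must again be an Eppo-group, hence one of the groups just classified. Take any $\overline{g} \in \overline{E}$ of odd prime order and, via Schur-Zassenhaus applied to $V\langle g\rangle / V$, lift it to an element $g \in E$ of the same order acting on $V$. If $C_V(g) \neq 1$, then any $1 \neq v \in C_V(g)$ gives $vg$ of composite order $|v|\cdot|g|$, contradicting the Eppo property. Hence every non-trivial odd-order element of $\overline{E}$ acts fixed-point-freely on $V$. Combined with the known $2$-modular character tables of the candidate simple groups (where most elements of odd order fix non-zero vectors except in the natural representation), this identifies $\overline{E}$ as one of $PSL(2,5)$, $PSL(2,8)$, $Sz(8)$, $Sz(32)$ and forces $V$ to be a direct sum of natural irreducible $\overline{E}$-modules, giving (d). The main obstacle throughout is the CFSG-dependent identification of simple Eppo-groups, together with the module-theoretic sieve that rules out all non-natural composition factors of $V$ and the verification that only the listed outer automorphism extensions preserve the Eppo property.
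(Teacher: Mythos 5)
The first thing to say is that the paper does not prove this statement: it is quoted verbatim as the Main Theorem of the reference by Delgado and Wu and is used throughout as a black box, so there is no internal proof to compare yours against. Judged on its own terms, your outline does follow the standard route by which this classification is obtained in the literature (Higman's analysis of solvable groups with all elements of prime power order for (a) and (b); Suzuki's list of simple CP-groups, nowadays recovered via CFSG, for (c); and the fixed-point-free action of odd-order elements on $O_{2}(E)$ for (d)). But as written it is a scaffold rather than a proof: the two steps that carry essentially all of the weight, namely the determination of the simple Eppo-groups and the identification of the admissible $2$-modules in (d), are delegated to ``inspecting the ATLAS'' and ``the known $2$-modular character tables,'' which is precisely the content of the theorem.

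Two concrete points would need repair even at the level of a sketch. First, in the solvable case the assertion that a Hall $\{p,q\}$-subgroup $H$ must have a normal Sylow subgroup and be Frobenius is false as stated: $S_{4}$ is a solvable Eppo-group in which neither Sylow subgroup is normal and which is $2$-Frobenius, not Frobenius. What the absence of commuting $p$- and $q$-elements gives you directly is that $F(H)$ is a group of prime-power order, that every element of order coprime to $|F(H)|$ acts fixed-point-freely on $Z(F(H))$, and hence (since a Frobenius complement of order divisible by two primes contains a cyclic subgroup of order $qr$) the bound $|\pi(E)|=2$ and the Frobenius/$2$-Frobenius dichotomy you eventually invoke. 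Second, in case (d) you assume without comment that $E/O_{2}(E)$ is one of the groups ``just classified,'' i.e.\ appears in list (c); a priori $E/O_{2}(E)$ is merely a nonsolvable Eppo-group with trivial core at $2$, and one must first rule out a nontrivial odd-order normal subgroup (equivalently, show $O_{2',2}(E)=O_{2}(E)$) before the module-theoretic sieve over the simple candidates applies. With those gaps filled, your outline matches the proof one finds in the cited sources.
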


Henceforth we shall concentrate on the question of what information can be
deduced about the group $G$ when $\Gamma$ is an $\mathcal{F}$-graph with
singular vertex. All results of this section are obtained under the
hypothesis below without explicitly mentioning it in each case. \newline

\textbf{\textit{Hypothesis.}} \textit{\, Suppose that $\left\vert
\pi(G)\right\vert \geq2$ and that $\Gamma=\Gamma(G,A)$ is an $\mathcal{F}$%
-graph with the singular vertex $z^{A}$. Fix $p$ as one of the prime
divisors of $|z|$.}\newline

\subsection{\textbf{KEY OBSERVATIONS}}

\begin{propositionn}
\label{Propo 2} The following hold for $\Gamma $.

\begin{itemize}
\item [(a)] We have $|z|=p$.\medskip

\item[(b)] Every connected component $\Delta $ of $\Gamma \lbrack V\setminus
\{z^{A}\}]$ is isomorphic to $P_{n}$ for some positive integer $n.$
One of the pendant vertices of $\Delta $ is connected to $z^{A}$ in $\Gamma$ and the only vertices of $\Delta $ which are connected to $z^{A}$ in $\Gamma $ are contained in the set of pendant vertices of $\Delta .$\medskip

\item[(c)] If $x^{A}\sim y^{A}$ with $(|x|,|y|)=1$ then $x$ and $y$ are of
prime orders and $z^{A}\in \{x^{A},y^{A}\}.$ In particular, for any $q$%
-element $u$ where $q\neq p$, the $C_{G}(u)$ is a $\{p,q\}$-group and for
any $p$-element $v$ such that $v^{A}\neq z^{A}$, $C_{G}(v)$ is a $p$-group.\medskip

\item[(d)] Let $M$ and $N$ be $A$-invariant subgroups of $G$ such that $%
N\vartriangleleft M$. If $z^{A}\cap M\subset N,$ then $M/N$ is an
Eppo-group. In particular, $G/N$ is an Eppo-group if $z^{A}\cap N\neq
\emptyset $ and $N$ is an Eppo-group if $z^{A}\cap N=\emptyset $.\medskip

\item[(e)] If $z^{A}$ is a complete vertex of $\Gamma $ then either $A$ is
not contained in $Inn(G)$ or $z\in Z(G)$ and $G/Z(G)$ is an Eppo-group.
Furthermore $\Gamma $ consists of a certain number of $C_{3}$'s and a
certain number of $P_{2}$'s joined at $z^{A}$.\medskip

\item[(f)] If $\Delta $ is a connected component of $\Gamma \lbrack
V\setminus \{z^{A}\}]$ which is not a subgraph of a triangle in $\Gamma $,
then $|x|$ divides $p^{2}$ for each $x^{A}\in V(\Delta )$. In particular,
the exponent of a Sylow $p$-subgroup of $G$ divides $p^{3}$.\medskip

\item[(g)] Let $Q$ be a Sylow $q$-subgroup of $G$ for a prime $q\neq p$ and
let $1\neq x\in \Omega _{1}(Z(Q))$. Then $x^{A}\sim z^{A}$, $Q\setminus
\{1\}\subseteq x^{A}$ and hence $exp(Q)=q.$ In particular, any two commuting
nontrivial $q$-elements lie in the same $A$-orbit.\medskip

\item[(h)] Let $H$ be an $A$-invariant subgroup of $G$ and $q$ be a prime
different from $p$. Then for any $Q\in Syl_{q}(G)$, either $Q\leq H$ or $%
Q\cap H=1.$ In particular, $(\left\vert H\right\vert ,[G:H])$ is a power of $%
p.$\medskip

\item[(i)] The Gr\"{u}nberg-Kegel graph of $G$ is the complete binary graph $%
K_{1,n}$ where $n+1=\left\vert \pi (G)\right\vert .$\medskip

\item[(j)] $(z^{x})^{A}=z^{A}$ for any $x\in GA$. In particular $z^{A}$ is
invariant under conjugation by elements of $G$, that is, $z^{A}$ is a normal
subset of $G$. \medskip

\item[(k)] If there exists a Sylow $p$-subgroup $P$ such that $z\in Z(P)$
then $z^{A}$ is a complete vertex of $\Gamma $ and $z^{A}\subset O_{p}(G)$%
.\medskip

\item[(l)] The distance of a pendant vertex from $z^{A}$ is at most
two.\medskip

\item[(m)] If $C_{n}$ appears as the subgraph of $\Gamma $ then it contains $%
z^{A}$ as a vertex and $n\leq 4.$\medskip
\end{itemize}
\end{propositionn}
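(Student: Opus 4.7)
The backbone of the argument is the pair of tools: every non-singular vertex of $\Gamma$ has degree at most two, and if $x,y$ commute with $\gcd(|x|,|y|)=1$ then $x,y,xy$ are three pairwise commuting elements of three pairwise distinct orders, producing a triangle $x^{A},y^{A},(xy)^{A}$ in $\Gamma$. I would prove (a)--(m) in the stated order, since each part leans on its predecessors.

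For (a), if $|z|$ were divisible by three distinct primes $r,s,t$, the three prime-part orbits pairwise commute inside the abelian group $\langle z\rangle$ and each picks up three neighbours in $\Gamma$ (two from the others plus $z^{A}$), contradicting the degree bound; if $|z|=r^{a}s^{b}$ with $r\neq s$ and $a\geq 2$, the three distinct orbits $z_{r}^{A},(z_{r}^{r})^{A},z_{s}^{A}$ all commute with $z_{r}$, giving $z_{r}^{A}$ degree three; the residual cases $|z|=rs$ and $|z|=p^{k}$ with $k\geq 2$ fall to a closer analysis of $C_{G}(z_{r})$ and the chain $z,z^{p},z^{p^{2}}$ via the triangle trick. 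Part (b) is purely graph-theoretic: every component of $\Gamma[V\setminus\{z^{A}\}]$ is a path or a cycle by the degree bound, a cycle would be disconnected from $z^{A}$ violating connectedness, and an interior vertex of a path cannot be adjacent to $z^{A}$ without attaining degree three. Part (c) is the triangle trick itself, and the supplementary claim that $|x|,|y|$ are prime comes from peeling off a prime-power part that would produce a fourth neighbour; the centraliser consequences follow from (c) applied to a hypothetical commuting element of forbidden order. Part (d) lifts an element of $M/N$ of order $rs$ to $x\in M$ whose commuting prime parts $x_{r},x_{s}$ satisfy $\{x_{r}^{A},x_{s}^{A}\}\ni z^{A}$ by (c), and $z^{A}\cap M\subseteq N$ kills the offending prime part modulo $N$. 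Part (e) follows because a complete $z^{A}$ combined with $A\leq\mathrm{Inn}(G)$ makes $G$ the union of $G$-conjugates of $C_{G}(z)$, forcing $z\in Z(G)$; (d) with $N=Z(G)$ then gives $G/Z(G)$ Eppo, and the triangle/$P_{2}$ description follows from (b) because a middle vertex of a path component would acquire a third neighbour once joined to the complete $z^{A}$.

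The technical heart is (f)--(m). For (f), if $x^{A}\in V(\Delta)$ with $|x|$ not dividing $p^{2}$, then either $|x|$ has a prime divisor $q\neq p$ (in which case the $p$- and $q$-parts of $x$ produce via (c) a triangle through $z^{A}$ that must contain $\Delta$, contradicting the hypothesis) or $|x|=p^{k}$ with $k\geq 3$ (in which case $x^{A},(x^{p})^{A},(x^{p^{2}})^{A}$ are three distinct orbits all centralising $x$, so $\deg(x^{A})\geq 3$). Part (g) uses that $\Omega_{1}(Z(Q))$ centralises all of $Q$: a nontrivial $x\in\Omega_{1}(Z(Q))$ is adjacent in $\Gamma$ to every nontrivial $q$-orbit; the adjacency $x^{A}\sim z^{A}$ is forced by (c) via a commuting $p$-element; (f) and the path structure then collapse every nontrivial $q$-element into $x^{A}$, yielding $\exp(Q)=q$. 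Part (h) is (g) applied to $Q\cap H$, since a proper nontrivial intersection would produce two distinct $q$-orbits, contradicting the one-orbit conclusion. Part (i) is immediate from (c), since edges of the Gr\"unberg--Kegel graph correspond to commuting coprime-order prime elements. Part (j) observes that $z^{A}$ is graph-theoretically characterised as the unique singular vertex and hence is invariant under conjugation by $GA$. Part (k) combines (j) with (g): $z\in Z(P)$ centralises $P$ and, via (g), every Sylow subgroup, so $z^{A}$ is complete. Parts (l) and (m) are corollaries of (b) and (f): a pendant vertex at distance $\geq 3$ from $z^{A}$ would produce an interior path vertex whose order is forbidden by (f), and a cycle $C_{n}$ with $n\geq 5$ would require more such interior vertices than (f) permits.

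The main obstacle I foresee is the threat of circularity between (f) and (g), since each invites an appeal to the other. I would break the loop by proving (f) using only (a)--(e) and the triangle trick, with no Sylow-theoretic input, and then deducing (g) from (c), (f), and the common-centraliser property of $\Omega_{1}(Z(Q))$.
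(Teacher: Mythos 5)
Your overall architecture (the degree bound plus the coprime--commuting triangle trick, proved in the order (a)--(m)) matches the paper's, and your sketches of (b), (c), (d), (e), (h) and (i) are essentially the paper's arguments. But several individual parts have genuine gaps. The most serious is (j): you claim $z^{A}$ is ``graph-theoretically characterised as the unique singular vertex and hence invariant under conjugation by $GA$'', but conjugation by $g\in G$ does \emph{not} permute the $A$-orbits (the sets $(x^{g})^{A}$ and $(x^{A})^{g}$ differ unless $g$ suitably normalizes $A$), so it induces no automorphism of $\Gamma(G,A)$ and there is no singular vertex for it to fix. The paper instead characterises $z^{A}$ algebraically: by (c) every $p$-element $v$ with $v^{A}\neq z^{A}$ has $C_{G}(v)$ a $p$-group, while by (g) some $A$-conjugate of $z$, and hence $z$ itself, centralises a $q$-element; since $C_{G}(z^{x})=C_{G}(z)^{x}$ for $x\in GA$, each $z^{x}$ is a $p$-element with non-$p$-group centraliser and so lies in $z^{A}$. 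Equally serious is (l)--(m): part (f) does not forbid a long path $z^{A}\sim x_{1}^{A}\sim x_{2}^{A}\sim x_{3}^{A}\sim\cdots$ all of whose vertices are orbits of elements of order dividing $p^{2}$, so ``an interior vertex whose order is forbidden by (f)'' is not a proof. The paper's argument needs a further idea: take $T\in Syl_{p}(G)$ containing $\langle x_{i},x_{i+1}\rangle$; a nontrivial element of $Z(T)$ gives a common neighbour of $x_{i}^{A}$ and $x_{i+1}^{A}$ which by the degree bound must equal one of them, and $T$ also contains a $G$-conjugate of $z$ lying in $z^{A}$ by (j); chasing this along the path bounds the distance from $z^{A}$ by two and the cycle length by four.

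Two smaller problems. In (a) your residual cases are not actually handled: for $|z|=p^{2}$ the chain $z,z^{p},z^{p^{2}}$ has only two nontrivial terms and the triangle trick gives nothing. The paper's argument is uniform and one line: if $z^{p}\neq 1$ then every neighbour of $z^{A}$ is also a neighbour of $(z^{p})^{A}$, so $\deg((z^{p})^{A})\geq\deg(z^{A})\geq 3$, contradicting uniqueness of the singular vertex. In (f), for $|x|=p^{k}$ with $k\geq 3$ the three orbits $x^{A},(x^{p})^{A},(x^{p^{2}})^{A}$ give $x^{A}$ only \emph{two} neighbours, not three; the correct conclusion is that they form a triangle which by (b) must contain $z^{A}$, forcing $k=3$, $(x^{p^{2}})^{A}=z^{A}$, and hence forcing $\Delta$ into a triangle, against the hypothesis. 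Finally, in (g) the ``commuting $p$-element'' you invoke to get $x^{A}\sim z^{A}$ must first be produced (via connectedness of $\Gamma$ together with the fact that all $q$-elements commuting with $x$ lie in $x^{A}$, which is part of what is being proved, so the order of deductions matters), and in (k) you omit any justification of $z^{A}\subseteq O_{p}(G)$, for which the paper invokes Theorem 3.5 of \cite{IG}.
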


\begin{proof}
$(a)$ Suppose that $z^{p}\ne 1$. Then $(z^{p})^{A}$ is a vertex different
from $z^{A}$ which is adjacent to each vertex adjacent to $z^{A}.$ This
leads to the contradiction $\deg((z^{p})^{A})\geqq\deg(z^{A})>2$. Therefore $%
|z|=p$ as claimed.\medskip

$(b)$ Let $\Delta $ be a connected component of $\Gamma \lbrack V\setminus
\{z^{A}\}]$. It follows by a similar argument as in the proof of Proposition %
\ref{Propo 1} $(i)$ that $\Delta =P_{n}$ for some positive integer $n$ or $%
C_{m}$ with $m\geq 3.$ The latter is impossible as $\Gamma $ is connected
and $z^{A}$ is the only vertex of $\Gamma $ of degree greater than two.
Because of the same reason the only vertices of $\Delta $ which are connected to $z^{A}$ in $\Gamma $ are contained in the set of pendant
vertices of $\Delta .$ 
Clearly at least one of the pendant vertices of $%
\Delta $ is connected in $\Gamma $ to $z^{A}.$\medskip

$(c)$ Assume that $x^{A}\sim y^{A}$ and that $(|x|,|y|)=1$. Without loss of
generality we may assume that $x$ and $y$ commute. Then $\Gamma \lbrack
x^{A},y^{A},(xy)^{A}]=C_{3}$ and hence is not a subgraph of $\Gamma \lbrack
V\setminus \{z^{A}\}]$ by $(b)$. It follows that $z^{A}\in $ $%
\{x^{A},y^{A},(xy)^{A}\}.$ Without loss of generality we may assume that $%
x^{A}=z^{A}$. Let $q$ be a prime divisor of $|y|.$ If $|y|\neq q$ then we
get $\Gamma \lbrack x^{A},y^{A},(xy)^{A},(y^{q})^{A}]=K_{4}$ which is also
impossible by paragraph $(b)$. Let now $u$ be a $q$-element where $q\neq p$
and let $w\in C_{G}(u)$ be an $r$-element for some $r\neq q$. Then $%
u^{A}\sim w^{A}$, and so $r=p$ by the above argument. Next let $v$ be a $p$%
-element such that $v^{A}\neq z^{A}.$ Clearly $v$ cannot be centralized by
an element the order of which is divisible by a prime different from $p$.\medskip

$(d)$ If $z^{A}\cap M\subset N,$ and there exists an element $xN$ of order $%
rs$ for two distinct primes $r$ and $s$ in $M/N$. Since $\left\langle
x\right\rangle $ contains an element of order $rs$ we observe by $(c)$ that $%
p\in \{r,s\}$ and the $p$-part $u$ of $x$ belongs to $z^{A}$. This leads to
a contradiction as $z^{A}\subset N$ but $u\notin N$. Therefore every
nontrivial element of $M/N$ has prime power order, that is $M/N$ is an
Eppo-group. \newline

$(e)$ That $z^{A}$ is a complete vertex means that for any nonidentity $x\in
G$ there exists some $a\in A$ such that $[z^{a},x]=1,$ that is, $\bigcup
_{a\in A} C_{G}(z)^{a}=G.$ If $A\leq Inn(G)$ then $\bigcup _{a\in A}
C_{G}(z)^{a}\subseteq \bigcup_{g\in G} C_{G}(z)^{g}$ which yields that $%
C_{G}(z)=G.$ Thus one can immediately conclude by $(d)$ that $G/Z(G)$ is an
Eppo-group.\newline

$(f)$ Let $\Delta $ be a connected component of $\Gamma \lbrack V\setminus
\{z^{A}\}]$ which is not a subgraph of a $C_{3}$ in $\Gamma $ and pick $%
x^{A} $ from $V(\Delta )$. Suppose that $|x|$ is divisible by two distinct
primes $r$ and $s$. Then there are $x_{1}$ and $x_{2}$ in $\left\langle
x\right\rangle $ of orders $r$ and $s$, respectively. Then $\Gamma \lbrack
x_{1}^{A},x_{2}^{A},x^{A}]=C_{3}$ is a subgraph of $\Gamma $ which is not
possible$.$ This shows that $|x|$ is a power of a prime $q$. In fact every
vertex in $\Delta $ must have a representative having order which is a power
of the same prime $q$ because otherwise $z^{A}$ appears as a vertex in $%
\Delta $ by $(c)$. Let $u^{A}$ be a vertex of $\Delta $ which is adjacent to 
$z^{A}.$ Without loss of generality we may assume that $u$ and $z$ commute.
If $q\neq p$ then $(uz)^{A}\in V(\Delta )$ which is impossible. Thus we have 
$q=p.$ Furthermore, if $p^{3}$ divides $|u|,$ we see that $\Gamma \lbrack
u^{A},(u^{p})^{A},(u^{p^{2}})^{A}]=C_{3}$ and hence $(u^{p^{2}})^{A}=z^{A}$
by $(b)$ and $(a)$. In particular $|u|=p^{3}$ completing the proof.\medskip

$(g)$ Let $Q$ be a Sylow $q$-subgroup of $G$ for some $q\ne p$ and let $1\ne
x\in\Omega_{1}(Z(Q)).$ Suppose that $x^{A}\ne y^{A}$ for some $y\in
Q\setminus \{1\}$. It holds that $x^{A}\sim y^{A}$. Let $\Delta$ denote the
connected component of $\Gamma[V\setminus \{z^{A}\}]$ containing $x^A$ and $%
y^A$. As $p\ne q$, we have $\left\vert \Delta\right\vert =2$ by $(f)$. We
may assume that $x^{A}\sim z^{A}$ and that $x$ commutes with $z.$ It follows
now that $(xz)^A\in V(\Delta)$ which is a contradiction. Therefore $%
Q\setminus \{1\}\subseteq x^A$ and hence $\exp(Q)=q$.\medskip

$(h)$ Let $H$ be an $A$-invariant subgroup of $G,$ and let $Q$ be a Sylow $q$%
-subgroup of $G$ for $q\in\pi(G)\setminus \{p\}$ such that $Q\cap H\neq1.$
Recall that $Q\setminus \{1\}\subset x^{A}$ for any $x\in\Omega_{1}(Z(Q))%
\setminus \{1\}$ by $(e)$. Pick $y\in Q\cap H$ of order $q$. It holds now
that $Q\setminus \{1\}\subseteq x^{A}=y^{A}\subset H$ and the claim
follows.\medskip

$(i)$ We observe by $(g)$ that for each prime $q\in \pi(G)\setminus \{p\}$
there exists an element of order $pq$ in $G$. On the other hand the
existence of an element in $G$ of order $qr$ for distinct primes $q$ and $r$
in $\pi(G)\setminus \{p\}$ is impossible by $(c)$.\medskip

$(j)$ Notice that, for any $x\in GA$, $z^{x}$ is a $p$-element of $G$
centralizing a $q$-element of $G$ for some prime $q\ne p$ by $(g)$. Then $%
z^{x}$ is $A$-conjugate to $z$ and so $(z^{x})^{A}=z^{A}$.\medskip

$(k)$ Suppose that there exists a Sylow $p$-subgroup $P$ such that $z\in
Z(P) $. Then for any $g\in G$ there exists some $a\in A$ by $(j)$ such that $%
z^{a}\in Z(P^{g})$ which means that $z^{A}\cap Z(S)\neq\emptyset$ for any $%
S\in Syl_{p}(G)$. We shall observe that $z^{A}\sim x^{A}$ for any $1\ne x\in
G.$ This is clear by $(c)$ if $x$ is of composite order, and by $(g)$ if $%
|x| $ is a power of a prime different from $p.$ Assume now that $x$ is a $p$%
-element, and let $S\in Syl_{p}(G)$ such that $x\in S.$ As $z^{A}\cap
Z(S)\neq\emptyset$ we see that $x^{A}$ is adjacent to $z^{A}$ as claimed. As 
$x$ is arbitrary we see that $z^{A}$ is a complete vertex. It follows now by
Theorem 3.5 in \cite{IG} that $z^{A}\subset O_{p}(G)$.\medskip

$(l)-(m)$ Let the induced graph on $\{x_{i}^{A} : i=0,1,\ldots,n\}$ be a
path with $n>4$ so that $x_{0}=z$ and $x_{i}^{A}\sim x_{i+1}^{A}$ for $%
i=0,1,\ldots ,n-1.$ Without loss of generality we may assume that $%
[x_{i},x_{i+1} ]=1, i=0,1,\ldots,n-1$. By $(f)$ $|x_i|$ divide $p^2$ for
each $i>0$. Let $T\in Syl_{p}(G)$ containing $\left\langle x_{1},
x_{2}\right\rangle $ and pick a nonidentity element $t_{1}$ from $Z(T).$ We
see by $(b)$ that $t_{1}^{A}\in\{x_{1}^{A}, x_{2}^{A}\}$. On the other hand
there exists $g\in G$ such that $z^{g}\in T$. Note that $(z^{g})^{A}=z^{A}$
by $(j)$. If $t_{1}^{A}=$ $x_{2}^{A}$ then $x_{2}^{A}$ is adjacent to $z^{A}$
and hence $n=2$ and $\Gamma[\{z^{A},x_{1}^{A}, x_{2}^{A}\}]$ is $C_{3}$
which is impossible. If $t_{1}^{A}= $ $x_{1}^{A}$ and $n>2$ then we apply
the same argument to $\{x_{2}^{A}$, $x_{3}^{A}\}$ and find an element $t_{2}$
such that $t_{2}^{A}$ is adjacent to each member of $\{z^{A},x_{2}^{A}$, $%
x_{3}^{A}\}$. As $x_{2}^{A}$ is not adjacent to $z^{A}$ we see that $%
t_{2}^{A}\notin \{ z^{A},x_{2}^{A} \}$, that is, $t_{2}^{A}=x_{3}^{A}\sim
z^{A}.$ This proves that the distance of any pendant vertex from $z^{A}$ is
at most two and any cycle appearing as a subgraph of $\Gamma$ is of length
at most four.\medskip
\end{proof}

We also frequently appeal to the next proposition in the rest of this paper.

\begin{propositionn}
\label{Propo 3} Let $M$ and $N$ be $A$-invariant subgroups of $G$ such that $%
N\vartriangleleft M$. Then the following hold.

\begin{enumerate}
\item[(a)] $M/N$ is nonabelian simple if $M/N$ is a nonsolvable $A$-chief
factor of $G.$

\item[(b)] $z^{A}\cap M\subset N$ if $M/N$ is nonabelian simple.
\end{enumerate}
\end{propositionn}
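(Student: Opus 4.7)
The plan for part (a) is to combine a standard chief-factor decomposition with Proposition 3.2(i). A nonsolvable $A$-chief factor $M/N$ of $G$ decomposes as a direct product $T_{1}\times\cdots\times T_{k}$ of pairwise isomorphic nonabelian simple groups; the target is $k=1$. Supposing $k\geq 2$, Burnside's $p^{a}q^{b}$-theorem gives $|\pi(T_{1})|\geq 3$, so I can pick distinct primes $r,s\in\pi(T_{1})\setminus\{p\}$ and take $\bar a\in T_{1}$ of order $r$ and $\bar b\in T_{2}$ of order $s$. These commute in $M/N$ (different direct factors), so $\bar a\bar b$ has order $rs$ in $M/N$, and a suitable power of any lift in $M\le G$ gives an element of $G$ of order exactly $rs$. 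This contradicts the star graph $K_{1,n}$ of Proposition 3.2(i) because $r,s$ are both different from $p$, forcing $k=1$.

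For part (b), suppose for a contradiction that $M/N$ is nonabelian simple and $z^{A}\cap M\not\subseteq N$. The first observation is a dichotomy: since $M$ is $A$-invariant and $z^{A}$ is a single $A$-orbit, from $z_{0}=z^{a}\in M\cap z^{A}$ one gets $z=z_{0}^{a^{-1}}\in M^{a^{-1}}=M$, whence $z^{A}\subseteq M$; the same reasoning applied to $N$ gives either $z^{A}\subseteq N$ or $z^{A}\cap N=\emptyset$. The hypothesis therefore forces $z^{A}\subseteq M$ and $z^{A}\cap N=\emptyset$. The image $\overline{z^{A}}$ in $M/N$ is then a single $A$-orbit of nontrivial $p$-elements (by Proposition 3.2(a)) and, using the $G$-normality of $z^{A}$ from Proposition 3.2(j), it is also $M$-invariant, so it is a union of $M/N$-conjugacy classes of $p$-elements in the simple group $M/N$.

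Next, for each prime $q\in\pi(M/N)\setminus\{p\}$, Proposition 3.2(h) places a Sylow $q$-subgroup $Q$ of $G$ entirely inside $M$, and Proposition 3.2(g) identifies $Q\setminus\{1\}=y_{q}^{A}$ and asserts $y_{q}^{A}\sim z^{A}$ in $\Gamma$. Because $z^{A}\subseteq M$ and $Q\subseteq M$, any witness of this adjacency lies in $M$, and its projection to $M/N$ produces a commuting pair $\bar z'\in\overline{z^{A}}$ of order $p$ and $\bar y\in M/N$ of order $q$. Hence $M/N$ admits an element of order $pq$ for every $q\in\pi(M/N)\setminus\{p\}$.

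Finally, since $|\pi(M/N)|\geq 3$ one selects two distinct primes $q,r\in\pi(M/N)\setminus\{p\}$ and aims to manufacture an element of $G$ of order exactly $qr$, forbidden by Proposition 3.2(i). The plan is to use the $M/N$-invariance of $\overline{z^{A}}$: after $M/N$-conjugation the commutation data for $q$ and for $r$ can be concentrated at a single $\bar z_{0}\in\overline{z^{A}}$, so $\{q,r\}\subseteq\pi\bigl(C_{M/N}(\bar z_{0})\bigr)$. Exploiting the internal structure of $C_{M/N}(\bar z_{0})$ then produces commuting $q$- and $r$-elements, whose product has order $qr$ in $M/N$, and lifting gives the desired element of $G$. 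The main obstacle will be this concentration-and-commutation step; in the degenerate cases where $C_{M/N}(\bar z_{0})$ is a $p$-group (for instance when $M/N$ is one of the simple Eppo-groups of Theorem 3.1, such as $A_{5}$), the alleged commuting pair $(\bar z',\bar y)$ of the preceding paragraph cannot exist in $M/N$ at all, so the adjacency $y_{q}^{A}\sim z^{A}$ fails to be realized inside $M$, contradicting $z^{A}\subseteq M$ and $Q\subseteq M$ directly. Either way we reach a contradiction, and so $z^{A}\cap M\subseteq N$.
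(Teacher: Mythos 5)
Part (a) of your proposal is correct and is essentially the paper's argument: decompose the nonsolvable chief factor into isomorphic nonabelian simple factors, use Burnside to find two primes $r,s\neq p$ in $\pi(T_1)$, and produce a forbidden element of order $rs$ from two distinct direct factors. Your opening moves for part (b) also track the paper exactly: from $z^{A}\cap M\not\subseteq N$ and $A$-invariance you correctly get $z^{A}\subseteq M$ and $z^{A}\cap N=\emptyset$, and then Proposition \ref{Propo 2} $(g)$ and $(h)$ give, for every $q\in\pi(M/N)\setminus\{p\}$, a commuting pair of nontrivial $p$- and $q$-elements in $M/N$; that is, $p$ is a complete vertex of the prime graph of the simple group $M/N$.

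The gap is in your final step. From that point on you try to reach a contradiction locally, by producing an element of order $qr$ for two primes $q,r\neq p$. This cannot work for two reasons. First, the "concentration" is unjustified: $\overline{z^{A}}$ may be a union of several $M/N$-classes, and even if $q$ and $r$ both divide $\left\vert C_{M/N}(\bar z_{0})\right\vert$ for a single $\bar z_{0}$, a group of order divisible by $q$ and $r$ need not contain commuting $q$- and $r$-elements, so no element of order $qr$ is forthcoming. Second, and more fundamentally, the constraints you have derived are not self-contradictory: a prime graph equal to a star $K_{1,n}$ centered at $p$ is exactly what Proposition \ref{Propo 2} $(i)$ asserts for $G$ itself, so nothing forbids $M/N$ from having such a prime graph a priori. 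What must be shown is that no nonabelian simple group has $p$ as a complete vertex of its prime graph compatibly with the other constraints of Proposition \ref{Propo 2}, and this is where the paper invokes genuinely nonlocal input: for $p=2$, Theorem 7.1 of Vasiliev--Vdovin forces $M/N\cong A_{n}$ with no primes in $[n-3,n]$, which is then excluded via a long cycle $\sigma$ with $C_{A_n}(\sigma)=\langle\sigma\rangle$; for $p$ odd, Proposition \ref{Propo 2} $(g)$ makes the Sylow $2$-subgroups elementary abelian, Walter's classification reduces to $PSL(2,2^{t})$, $PSL(2,s)$ with $s\equiv\pm3\ (\mathrm{mod}\ 8)$, or a component case, and each is eliminated by centralizer-order arithmetic. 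Your "degenerate case" fallback (e.g.\ $M/N\cong A_{5}$) only disposes of particular candidates and implicitly presupposes the very classification you are missing. Without an appeal to results of this type, part (b) is not proved.
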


\begin{proof}
$(a)$ If $M/N$ is a nonsolvable $A$-chief factor of $G$ then $M/N$ is a
direct product of isomorphic nonabelian simple groups. In case where $M/N$
is not simple there exist two prime numbers $r$ and $s$ distinct from $p$ so
that $M/N$ and hence $G$ contains an element of order $rs$ which is
impossible by Proposition \ref{Propo 2} $(c)$.\medskip

$(b)$ Let $M/N$ be a nonabelian simple group such that $z^{A}\cap
M\nsubseteq N.$ Then $z^{A}\subseteq M$ and $zN\neq N$. Let $p\neq
q\in\pi(M/N).$ Then $q\notin \pi (N)$ by Proposition \ref{Propo 2} $(h)$. On
the other hand, by Proposition \ref{Propo 2} $(g)$, for any $x\in M$ of
order $q$ there exists $a\in A$ such that $[z^{a},x]=1$ which means that $%
N\neq xN$ commutes with $z^{a}N=(zN)^{a}.$ This implies that the Gr\"{u}%
nberg-Kegel graph of the simple group $M/N$ has the vertex $p$ as a complete
vertex.

In case where $p=2$ it follows by Theorem 7.1 in \cite{Va} that $M/N$ is an
alternating group $A_{n}$ for some $n$ such that there are no prime numbers $%
r$ with $n-3\leqq r\leqq n.$ Let now $m=n$ if $n$ is odd \ and $m=n-1$ if $n 
$ is even. Then there exists an $m$-cycle, say $\sigma$, in $A_{n}.$ Now $%
|\sigma|=m$ is an odd integer which is not a prime and cannot be divisible
by two distinct primes by Proposition \ref{Propo 2} $(c)$. So $m=r^{k}$ is a
prime power with $k>1$ which is impossible by Proposition \ref{Propo 2} $(g)$
as $C_{A_n}(\sigma)=\langle\sigma \rangle$. This shows that $p$ must be odd,
and so Sylow $2$-subgroups of $M/N$ are elementary abelian by Proposition %
\ref{Propo 2} $(g)$. Using the main result of \cite{JW} we see that one of
the following holds for the simple group $M/N$:

\begin{enumerate}
\item[(i)] $M/N\cong PSL(2,2^{t})$ with $t\geq 2$;

\item[(ii)] $M/N\cong PSL(2,s)$ where $s\equiv\pm 3(mod~~8)$;

\item[(iii)] $M/N$ contains an involution $u$ with $C_{M/N} (u)=\left\langle
u\right\rangle \times K$ with $K\cong PSL(2,s)$ where $s\equiv \pm 3$ $%
(mod~~8).$
\end{enumerate}

Since $\pi (K)$ contains at least three distinct primes the case (iii)
cannot occur because otherwise we would get $p=2$. The case (i) cannot also
occur because otherwise the centralizers of involutions are Sylow $2$%
-subgroups of $M/N$ but their orders must be divisible by $p.$ Therefore we
are left with the case (ii), that is, $M/N\cong PSL(2,s)$ where $s\equiv \pm
3(mod~~8)$. Suppose that $s=3+8k$ for some $k$. Then $\left\vert
M/N\right\vert =s(1+4k)4(1+2k)$ and $M/N$ has cyclic subgroups of orders $%
(1+4k)$ and $2(1+2k)$ which contain the centralizer in $M/N$ of any of its
nontrivial elements of odd order by Kapital II Satz 8.3 and Satz 8.4 in \cite%
{Hu}. So we see that $p$ divides both $1+2k$ and $1+4k\ $which is not
possible. Similarly if $s=5+8k$ for some $k$ then we have $\left\vert
M/N\right\vert =s(3+4k)4(1+2k)$ and $M/N$ has cyclic subgroups of orders $%
(3+4k)$ and $2(1+2k)$ which contain the centralizer in $M/N$ of any of its
nontrivial elements of odd order by Kapital II Satz 8.3 and Satz 8.4 in \cite%
{Hu}. This forces that $p$ divides both $1+2k$ and $3+4k$ which is also
impossible completing the proof of (iii).
\end{proof}

\begin{propositionn}
\label{Prop 1}If a Sylow $2$-subgroup of $G$ is nonabelian then $p=2$ and
all Sylow subgroups for odd primes have prime exponent . If a Sylow $2$%
-subgroup of $G$ is abelian then either $G$ is solvable or $G$ has only one
nonabelian $A$-chief factor in an $A$-chief series and that is isomorphic to $%
PSL(2,5).$
\end{propositionn}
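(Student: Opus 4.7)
I would tackle the two assertions of Proposition~\ref{Prop 1} separately, handling the nonabelian Sylow $2$ case first since it is essentially immediate.

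Assume first that a Sylow $2$-subgroup $T$ of $G$ is nonabelian. By Proposition~\ref{Propo 2}(g), for every prime $q \in \pi(G)\setminus\{p\}$ a Sylow $q$-subgroup of $G$ has exponent $q$. If $p \neq 2$, then $T$ itself has exponent $2$ and is therefore elementary abelian, contradicting the assumption on $T$. Hence $p = 2$, and applying Proposition~\ref{Propo 2}(g) to every odd prime $q \in \pi(G)$ (which must differ from $p$) yields that every Sylow subgroup for an odd prime has prime exponent.

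Now assume that a Sylow $2$-subgroup of $G$ is abelian and that $G$ is not solvable. Pick a nonabelian $A$-chief factor $M/N$. By Proposition~\ref{Propo 3}(a), $M/N$ is nonabelian simple, and by Proposition~\ref{Propo 3}(b) we have $z^{A} \cap M \subseteq N$, so Proposition~\ref{Propo 2}(d) shows that $M/N$ is an Eppo-group. A Sylow $2$-subgroup of $M/N$, being a quotient of a subgroup of the abelian Sylow $2$-subgroup of $G$, is abelian. Running through the list in Theorem~\ref{Theo 1}(c), the only nonabelian simple Eppo-groups with abelian Sylow $2$-subgroup are $PSL(2,5)$ (Sylow $2 \cong V_{4}$) and $PSL(2,8)$ (Sylow $2$ elementary abelian of order $8$).

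The heart of the proof is ruling out $M/N \cong PSL(2,8)$. The Sylow $3$-subgroup of $PSL(2,8)$ is cyclic of order $9$, so Proposition~\ref{Propo 2}(g) forces $p = 3$; otherwise a Sylow $3$-subgroup of $M/N$, being a quotient of a subgroup of a Sylow $3$-subgroup of $G$, would have exponent $3$. Proposition~\ref{Propo 2}(i) then says the Gr\"{u}nberg--Kegel graph of $G$ is a star centered at $3$, so $G$ contains elements of order $6$ and of order $21$. Since in $PSL(2,8)$ no nonidentity $3$-power element commutes with an involution or with a nontrivial $7$-element, the $3$-parts of the commuting $\{2,3\}$- and $\{3,7\}$-pairs in $G$ must lie in $N$. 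Lifting the Frobenius subgroup $C_{7}\rtimes C_{3}$ of $PSL(2,8)$ into $G$ and tracing the commuting structure of the lifts of elements of orders $3,7,9$ through Proposition~\ref{Propo 2}(b), (c), (f), one produces either a path in $\Gamma[V\setminus\{z^{A}\}]$ whose length exceeds the bound of Proposition~\ref{Propo 2}(l) or a cycle of length greater than $4$, contradicting Proposition~\ref{Propo 2}(m). Hence $M/N \cong PSL(2,5)$.

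For uniqueness, suppose an $A$-chief series of $G$ contained two nonabelian factors $M_{1}/N_{1}$ and $M_{2}/N_{2}$; by the above each is isomorphic to $PSL(2,5)$. The corresponding layer carries two commuting copies of $PSL(2,5)$, hence nonidentity commuting elements of prime orders $2,3,5$ arising from the two copies. Using Proposition~\ref{Propo 2}(g) and (h) together with $\pi(PSL(2,5)) = \{2,3,5\}$, these produce commuting elements of distinct prime orders whose $A$-orbits differ from $z^{A}$, contradicting Proposition~\ref{Propo 2}(c). The main obstacle is the $PSL(2,8)$ elimination: the Gr\"{u}nberg--Kegel constraint alone is not enough, and one must carefully track how the coprime action of a $7$-element of $G$ on the normal $3$-subgroup $N$ interacts with the path-and-triangle structure of $\Gamma$ encoded in Proposition~\ref{Propo 2}(b), (l), and (m).
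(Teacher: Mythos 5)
Your first paragraph (nonabelian Sylow $2$-subgroup $\Rightarrow$ $p=2$ and odd Sylow subgroups of prime exponent) and your reduction of a nonabelian $A$-chief factor to $PSL(2,5)$ or $PSL(2,8)$ match the paper exactly. But the step you yourself identify as ``the heart of the proof'' --- eliminating $PSL(2,8)$ --- is not actually carried out. You assert that ``lifting the Frobenius subgroup $C_{7}\rtimes C_{3}$ \dots one produces either a path \dots or a cycle of length greater than $4$,'' without naming the vertices or verifying adjacency/non-adjacency; this is a plan, not an argument, and it is far from clear it can be executed, since everything depends on where $z^{A}$ sits relative to $M$ and on the structure of $N$. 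The paper's elimination is quite different: after fixing $p=3$ it first shows $2\nmid|N||G/M|$, uses the coprime action of an elementary abelian Sylow $2$-subgroup of $M$ on $N/N'$ and $N'$ to conclude $N=1$ or $N$ is elementary abelian generated by $z^{A}$, kills the case $N=1$ with a $K_{4}$ on $z^{A},x^{A},(x^{p})^{A},(zx)^{A}$ for $x$ of order $9$ in $M$, and kills $N\neq 1$ by forcing $N=Z(M)\leq M'$ and invoking the triviality of the Schur multiplier of $PSL(2,8)$. None of this structural analysis appears in your sketch.

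Your uniqueness argument also has a genuine error: two nonabelian factors in an $A$-chief series are stacked sections, not commuting subgroups, so ``the corresponding layer carries two commuting copies of $PSL(2,5)$'' does not follow --- you are conflating chief factors with components. A correct elementary route from the paper's toolkit is Proposition~\ref{Propo 2}(h): if $M_{1}/N_{1}$ and $M_{2}/N_{2}$ are both nonabelian with $M_{1}\leq N_{2}$, then $3$ divides both $|N_{2}|$ and $[G:N_{2}]$, forcing $p=3$, while the same argument with $5$ forces $p=5$. The paper instead argues that for $p$ odd $|N|\,|G/M|$ is odd (hence solvable), and for $p=2$ it passes to the solvable radical $S$, identifies $K/S\cong PSL(2,5)$, and shows $C_{G}(K/S)/S$ is solvable via $C_{G}(K/S)\subseteq\bigcup_{a\in A}Sz^{a}$. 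As written, your proposal leaves both the $PSL(2,8)$ elimination and the uniqueness step unproven.
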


\begin{proof}
The first claim is clear by Proposition \ref{Propo 2} $(g)$. Assume now that $G$ is
nonsolvable and a Sylow $2$-subgroup of $G$ is abelian. Let $M/N$ be a
nonabelian $A$-chief factor of $G.$ Then by Proposition \ref{Propo 3} (b) and Proposition \ref{Propo
2} (d) $M/N$ is a simple Eppo-group with abelian Sylow $2$-subgroup and
hence isomorphic to $PSL(2,5)$ or $PSL(2,8).$

Assume first that $M/N$ is isomorphic to $PSL(2,8).$ Then $M/N$ has a cyclic
subgroup of order $9$ and so $p=3$. This shows that $2$ does not
divide $\left\vert N\right\vert \left\vert G/M\right\vert $ whence 
$M/N$ is the only nonsolvable $A$-chief factor. Therefore $N$ is
solvable. Let $T$ be a Sylow $2$-subgroup of $M.$ Then $T$ is elementary
abelian and acts by automorphisms on $N/N^{\prime }$ and $N^{\prime }$. Thus $%
N/N^{\prime }=\left\langle C_{N}(t)N^{\prime }/N^{\prime }:1\neq t\in
T\right\rangle $ and $N^{\prime }=\left\langle C_{N^{\prime }}(t):1\neq t\in
T\right\rangle $. Since it is not possible that both $N^{\prime }$ and $%
N\setminus N^{\prime }$ contain elements of $z^{A}$ we see that either $N=1$ or $N$
is an elementary abelian $p$-group generated by $z^{A}.$

If $N=1$ then $M$
is a minimal normal subgroup of $GA$ and $z\in G\setminus M$ by Proposition \ref{Propo 3} (b) and
centralizes a Sylow $p$-subgroup $\left\langle x\right\rangle $ of $M.$ But
this is not possible because then $x$ is an element of order $p^{2}$ and $%
\Gamma \lbrack z^{A},x^{A},(x^{p})^{A},(zx)^{A}]=K_{4}.$ 

If $N\neq1$ and $P\in Syl_{p}(G)$ then $N\leq P$ and hence $N\cap Z(P)\neq1.$
Without loss of generality we may assume that $z\in N\cap Z(P).$ This shows
that $\left\vert C_{M}(z)\right\vert $ is divisible by $2\cdot3^{2}\cdot
7\cdot\left\vert N\right\vert .$ As $M/N$ cannot have a proper subgroup of
index less than $5$ we see that $C_{M}(z)=M$ and hence $N=Z(M).$ As
$M^{\prime}N=M$ we have either $N\cap M^{\prime}=1$ or $N\leq
M^{\prime}.$ The first case brings us back to the situation $N=1$ which was
seen as leading to a contradiction. The second case is also not possible
because it implies that the Schur multiplier of $PSL(2,8)$ contains a
nontrivial $p$-subgroup, but it is known to be trivial.

So we have $M/N$ is isomorphic to $PSL(2,5).$ If $p\neq2$ then $\left\vert
N\right\vert \left\vert G/M\right\vert $ is odd and the claim follows. Thus we
can also assume that $p=2.$

Let us now consider the $A$-chief factor $K/S$ where $S$ is the solvable
radical of $G$. What we have seen so far shows that $K/S\cong PSL(2,5).$
$G/KC_{G}(K/S)$ is isomorphic to a subgroup of $Out(PSL(2,5)\cong Z_{2}$. To
show that $G/K$ is solvable we only need to get that $KC_{G}(K/S)/K\cong
C_{G}(K/S)/(K\cap C_{G}(K/S))=$ $C_{G}(K/S)/S$ is solvable. As for $r\in$
$\pi(K/S)\backslash\{p\}=\{3,5\}$ we know that $r$ does not divide $\left\vert
S\right\vert $, a Sylow $r$-subgroup $R$ of $K$ is a Sylow $r$-subgroup of
\ $G$ and is isomorphic to a Sylow $r$-subgroup of $K/S.$Therefore
$C_{G}(K/S)\leq C_{G}(RS/S)\leq SC_{G}(R)$ for any $r\in$ $\pi(K/S)\backslash
\{p\}$ and hence $C_{G}(K/S)\subseteq\bigcup\nolimits_{a\in A}Sz^{a}.$ This
establishes the claim.
\end{proof}

\subsection{\textbf{SOME CRITICAL GROUPS $G$ FOR WHICH $\Gamma $ IS NOT AN $%
\mathcal{F}$-GRAPH}}

\begin{lemman}
\label{Lemma1}Let $G$ be a group having a characteristic subgroup $N$ such
that $G/N\cong Sz(8).$ Then there exists no $A\leq Aut(G)$ such that $\Gamma 
$ is an $\mathcal{F}$-graph.
\end{lemman}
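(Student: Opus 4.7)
The plan is to contradict the hypothesis by first pinning down $p=2$ and $z\in N$, then exhibiting many neighbors of $z^A$, and finally overflowing the degree of a non-singular vertex via the $2$-structure of $G$.

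Since $N$ is characteristic in $G$ it is $A$-invariant, and $G/N\cong Sz(8)$ is a simple nonabelian $A$-chief factor. Proposition~\ref{Propo 3}(b) applied to the pair $(G,N)$ forces $z^A\cap G\subseteq N$, so $z\in N$. Because $Sz(8)$ contains elements of order $4$, any Sylow $2$-subgroup of $G/N$ (and hence of $G$) has exponent at least $4$; by Proposition~\ref{Propo 2}(g) the Sylow $q$-subgroup of $G$ has exponent $q$ for every prime $q\neq p$, forcing $p=2$, and Proposition~\ref{Propo 2}(a) gives $|z|=2$. For each $q\in\{5,7,13\}\subseteq\pi(G/N)$, Proposition~\ref{Propo 2}(h) forces the Sylow $q$-subgroup of $G$ to intersect $N$ trivially and to be cyclic of order $q$; fix generators $u_q$. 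By Proposition~\ref{Propo 2}(g), $u_q^A\sim z^A$ in $\Gamma$. By Proposition~\ref{Propo 2}(c), no $2$-element of order $\geq 4$ can commute with $u_q$, so $C_G(u_q)=\langle u_q\rangle\times C_N(u_q)$ with $C_N(u_q)$ elementary abelian and all its involutions lying in $z^A$. Products $y_q=u_qt$ with $t$ an involution in $C_N(u_q)$ have order $2q$, and the orbit $y_q^A$ is adjacent to both $u_q^A$ and $z^A$, producing triangles $\{z^A,u_q^A,y_q^A\}$ for each $q\in\{5,7,13\}$.

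Now let $r\in G$ be a $2$-element whose image in $G/N$ has order $4$; by Proposition~\ref{Propo 2}(f), $|r|\in\{4,8\}$. If $|r|=8$, then $r,r^2,r^4$ pairwise commute with distinct orders, so $\{r^A,(r^2)^A,(r^4)^A\}$ is a $C_3$ in $\Gamma$; by Proposition~\ref{Propo 2}(m) it contains $z^A$, hence $r^4\in z^A$ and both $r^A$ and $(r^2)^A$ are adjacent to $z^A$. If $|r|=4$, then $r^2$ is an involution with $r^2N\neq 1$, so $r^2\notin N$ and $(r^2)^A\neq z^A$. In either case I exploit that in a Sylow $2$-subgroup $P'$ of $G/N$ the centre $Z(P')$ is elementary abelian of order $8$, so $r^2N$ is centralized in $G/N$ by the six further involutions of $Z(P')\setminus\langle r^2N\rangle$; lifting these to $2$-elements of $G$ produces additional $A$-orbits of $2$-elements adjacent to $(r^2)^A$ (respectively $r^A$) in $\Gamma$. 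This pushes the degree of $(r^2)^A$ (or $r^A$) above $2$, contradicting the $\mathcal{F}$-graph property since these vertices are not the singular one.

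The main obstacle is this last step: converting the rich $2$-local structure of $Sz(8)$ into a concrete overload of non-singular degrees. The delicate points are verifying that distinct lifts of $Z(P')$-involutions give genuinely distinct $A$-orbits, and ruling out the possibility that the new edges are absorbed into a single cycle through $z^A$, which in any event is bounded in length by $4$ through Proposition~\ref{Propo 2}(m). Pulling this off requires combining Propositions~\ref{Propo 2}(c), (j), and (m) with the Sylow structure already pinned down, together with the fact that by Proposition~\ref{Prop 1} the Sylow $2$-subgroup of $G$ is nonabelian of exponent dividing $8$.
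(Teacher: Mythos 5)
Your reductions are sound up to a point: $z\in N$ by Proposition~\ref{Propo 3}(b), $p=2$ and $|z|=2$, and the observation that any $2$-element $r$ with $rN$ of order $4$ satisfies $|r|\in\{4,8\}$, with $(r^4)^A=z^A$ in the order-$8$ case, all check out. But the step you yourself flag as the main obstacle is where the argument genuinely breaks, and it cannot be repaired along the lines you sketch. The six further involutions of $Z(P')\setminus\langle r^2N\rangle$ are all conjugate to $r^2N$ in $G/N$, since $Sz(8)$ has a \emph{single} conjugacy class of involutions; likewise its elements of order $4$ fall into only two classes. So when $A$ is large (e.g.\ $A=Aut(G)$, which is the case you must handle since $A$ is arbitrary), all your lifts may well collapse into one or two $A$-orbits, and you get no overload of $\deg((r^2)^A)$ at all. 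Nothing in your argument shows the lifted orbits are pairwise distinct, and in general they are not. The triangles at $z^A$ built from $u_5,u_7,u_{13}$ are harmless to the $\mathcal{F}$-graph property ($z^A$ is allowed arbitrary degree), so that paragraph contributes nothing toward a contradiction.

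The missing idea, which is exactly what the paper's proof supplies, is a guarantee of \emph{two distinct} $A$-orbits of elements of order $4$ outside $N$ that commute with each other. In $Sz(8)$ the two classes of order-$4$ elements are represented by $\alpha$ and $\alpha^{-1}$, and since $|Aut(Sz(8)):Inn(Sz(8))|=3$ one checks that no automorphism of $Sz(8)$ can interchange these two classes; hence for any $A\leq Aut(G)$ and any $x\in G$ with $xN$ of order $4$ one has $x^{A}\neq(x^{-1})^{A}$. Then $\{x^{A},(x^{-1})^{A},(x^{2})^{A}\}$ is a triangle in $\Gamma$, which by Proposition~\ref{Propo 2}(b)/(m) must contain the singular vertex $z^{A}$; but all three orbits lie in $G\setminus N$ while $z^{A}\subseteq N$, a contradiction. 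This non-fusion argument is robust against $A$ being large, which is precisely the regime your orbit-counting approach cannot survive.
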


\begin{proof}
For\ $A\leq Aut(G)$ suppose that $\Gamma $ is an $\mathcal{F}$-graph. It is
clear by Proposition \ref{Propo 1} that $\Gamma $ is an $\mathcal{F}$-graph
with a singular vertex. As $G/N\cong Sz(8)$ holds there exists a $2$-element 
$x$ of $G$ such that $\alpha =xN$ is of order $4$. Note that there are two
conjugacy classes of elements of order $4$ represented by $\alpha $ and $%
\alpha ^{-1}$ in $Sz(8)$ and that they cannot fuse in $Aut(Sz(8))$ as $%
|Aut(Sz(8)):Inn(Sz(8))|=3$. Therefore there cannot exist any $a\in A$ such
that $x^{a}=x^{-1},$ that is, $x^{A}\neq (x^{-1})^{A}$. This yields that $%
\{x^{A},(x^{-1})^{A},(x^{2})^{A}\}$ forms a triangle and hence must contain
the singular vertex. By $(b)$ of Proposition \ref{Propo 3} we know that the
singular vertex is contained in $N.$ This contradicts the fact that $%
x^{A}\cup (x^{-1})\cup (x^{2})^{A}\subseteq G\setminus N$ and completes the
proof.
\end{proof}

\begin{lemman}
There exists no $A\leq Aut(SL(2,9))$ such that $\Gamma (SL(2,9),A)$ is an $%
\mathcal{F}$-graph.
\end{lemman}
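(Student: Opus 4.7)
The plan is to assume for contradiction that $\Gamma=\Gamma(SL(2,9),A)$ is an $\mathcal{F}$-graph for some $A\leq\mathrm{Aut}(SL(2,9))$ and extract a contradiction from the structure this imposes. Since $|\pi(SL(2,9))|=3\geq 2$ the Hypothesis applies. The Sylow 2-subgroup of $SL(2,9)$ is the generalized quaternion group $Q_{16}$ (nonabelian), so Proposition~\ref{Prop 1} forces $p=2$; and because $-I$ is the unique involution of $SL(2,9)$, we must have $z=-I$ and hence $z^{A}=\{-I\}$. Since $z$ lies in the centre of every Sylow 2-subgroup, Proposition~\ref{Propo 2}(k) shows $z^{A}$ is a complete vertex, and Proposition~\ref{Propo 2}(e) then forces $\Gamma$ to be a union of triangles and pendant edges joined at $z^{A}$, with $SL(2,9)/\langle -I\rangle\cong A_{6}$ necessarily an Eppo-group (which it is, since $A_{6}$-element orders are $1,2,3,4,5$).

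Next I would apply Proposition~\ref{Propo 2}(g) to the Sylow-$q$ subgroups for $q\in\{3,5\}$: every nontrivial element of such a subgroup must lie in a single $A$-orbit. The Sylow 3-subgroup $\mathbb{Z}_{3}\times\mathbb{Z}_{3}$ has its eight nontrivial elements split into two $SL(2,9)$-conjugacy classes (distinguished by the square/nonsquare invariant of the unipotent parameter in $\mathbb{F}_{9}^{*}$), and the cyclic Sylow 5-subgroup has its four nontrivial elements in two $SL(2,9)$-classes (the two Galois orbits of 5th roots of unity in $\mathbb{F}_{81}$), so $A$ must fuse each pair. In parallel, at a split torus $T=\langle g\rangle$ of order 8, if the two $SL(2,9)$-classes of order-8 elements were distinct $A$-orbits then $\{z^{A},(g^{2})^{A},g^{A},(g^{3})^{A}\}$ would induce a $K_{4}$-subgraph (all four non-central elements of $T$ pairwise commute), contradicting the $\mathcal{F}$-graph condition at the order-4 vertex $(g^{2})^{A}$; hence $A$ must also fuse the order-8 classes.

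The crux of the argument, and its main obstacle, is then to derive a contradiction from these simultaneous fusion demands. Using $\mathrm{Aut}(SL(2,9))\cong\mathrm{Aut}(A_{6})$ with $\mathrm{Out}\cong\mathbb{Z}_{2}\times\mathbb{Z}_{2}$ generated by the diagonal and Frobenius automorphisms, I would track which cosets of $\mathrm{Inn}$ in $A$ supply which fusion: the diagonal automorphism fuses the unipotent classes but fixes the Galois-distinguished torus classes, while the Frobenius fuses the Galois orbits (and hence both order-5 and order-8 classes) yet preserves the square/nonsquare invariant of the unipotents. The delicate final step---where the structure of $SL(2,9)$ (as opposed to $PSL(2,9)=A_{6}$) must intervene---is to show that no $A$ realising all three fusions can avoid producing an additional commuting configuration in $\Gamma$: for instance, an extra vertex of degree $\geq 3$, or a cycle of length greater than 4 contradicting Proposition~\ref{Propo 2}(m), or a pendant vertex at distance greater than two from $z^{A}$ in violation of Proposition~\ref{Propo 2}(l). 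Making this incompatibility precise, by explicit bookkeeping of how lifted outer automorphisms act on the specific conjugacy classes of the double cover $2.A_{6}$, is the technically demanding portion of the proof.
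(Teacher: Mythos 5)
Your reduction is sound as far as it goes: $p=2$, $z=-I$ is a complete vertex, and Proposition \ref{Propo 2}(g) together with your $K_4$ argument in the split torus forces $A$ to fuse the two $SL(2,9)$-classes of elements of order $3$, of order $5$ and of order $8$. But the proposal stops exactly where it has to do its work: you state that ``no $A$ realising all three fusions can avoid producing an additional commuting configuration'' and defer this to unspecified bookkeeping. That step is the entire content of the lemma, so as written this is a genuine gap, not a technicality. Worse, your own setup suggests the step cannot be carried out. If, as you assert, the diagonal automorphism (conjugation by $\mathrm{diag}(\nu,1)$ with $\nu$ a nonsquare) fuses the two unipotent classes while the field automorphism fuses the order-$5$ and order-$8$ classes, then the subgroup of $Aut(SL(2,9))$ generated by $Inn(SL(2,9))$ and the product of these two outer automorphisms realises all three fusions simultaneously; its orbits on $SL(2,9)\setminus\{1\}$ are then exactly the sets of elements of a fixed order ($2,3,4,5,6,8,10$), and since $SL(2,9)$ has no elements of order $12,15,20,24,30$ or $40$, the resulting orbit graph is the friendship graph with the three triangles $\{2,3,6\}$, $\{2,4,8\}$, $\{2,5,10\}$ glued at $(-I)^A$ --- which is an $\mathcal{F}$-graph. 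So along your route there is no contradiction to be found.

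The paper's proof is entirely different and much shorter: it asserts that $Aut(SL(2,9))=Inn(G)\langle\gamma\rangle$ where $\gamma$ is the field automorphism, notes that $\gamma$ fixes a copy of $SL(2,3)$ and hence an element of order $3$, concludes that the two classes of order-$3$ elements can never fuse into one $A$-orbit, and then exhibits the $5$-clique on $X^A,Y^A,(XZ)^A,(YZ)^A,Z^A$ for unfused order-$3$ representatives $X,Y$ in one Sylow $3$-subgroup. Everything therefore turns on whether the diagonal automorphism lies in $Aut(SL(2,9))$: you say it does and that it fuses the unipotent classes (and indeed $\mathrm{diag}(\nu,1)\,u_1\,\mathrm{diag}(\nu,1)^{-1}=u_\nu$ while $u_1\sim_{SL(2,9)}u_a$ only for $a$ a square), whereas the paper's argument rests on $Out(SL(2,9))$ being generated by $\gamma$ alone. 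You must confront and resolve this discrepancy explicitly; until you do, the proposal does not prove the lemma, and your observation about the diagonal automorphism is in direct tension with the argument the paper actually uses.
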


\begin{proof}
Let $G=SL(2,9)$ and suppose that $\Gamma $ is an $\mathcal{F}$-graph for
some $A\leq Aut(G)$. Let $T\in Syl_{3}(G)$. Then $T$ is elementary abelian
of order $9,$ $C_{G}(T)=T\times Z(G)$ and $N_{G}(T)/C_{G}(T)$ is cyclic of
order $4$. Hence there are two conjugacy classes of elements of order $3$ in 
$G$. Now $Aut(G)=Inn(G)\left\langle \gamma \right\rangle $ where $\gamma $
is the automorphism of $SL(2,9)$ arising from the automorphism $x\mapsto
x^{3}$ of the field $GF(9).$ Since $\gamma $ fixes a subgroup of $G$ which
is isomorphic to $SL(2,3)$ and hence fixes an element of order $3$, we see
that the two conjugacy classes of elements of order $3$ do not fuse to one $%
Aut(G)$-orbit. If $X$ and $Y$ are representatives of different $A$-orbits of
elements of order $3$ lying in the same Sylow $3$-subgroup and $Z$ is the
involution in the center of $G$ we see that the induced graph on the set of $%
A$-orbits represented by $X,Y,XZ,YZ,Z$ is a clique which shows that $\Gamma $
is not an $\mathcal{F}$-graph. This contradiction completes the proof.
\end{proof}

\begin{lemman}
There exists no $A\leq Aut(SL(2,7))$ such that $\Gamma (SL(2,7),A)$ is an $%
\mathcal{F}$-graph.
\end{lemman}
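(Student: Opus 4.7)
My plan is to assume that $\Gamma = \Gamma(SL(2,7),A)$ is an $\mathcal{F}$-graph for some $A\leq Aut(SL(2,7))$ and derive a contradiction by exhibiting an induced $K_{4}$ in $\Gamma$. Since $|\pi(SL(2,7))|=3$ and $SL(2,7)$ is neither a $p$-group nor a direct product of two elementary abelian Sylow subgroups, Proposition \ref{Propo 1} forces $\Gamma$ to possess a singular vertex $z^{A}$, and by Proposition \ref{Propo 2}(a) we have $|z|=p$ for some prime $p$. The Sylow $2$-subgroups of $SL(2,7)$ are generalized quaternion groups of order $16$ and exponent $8$, so Proposition \ref{Propo 2}(g) rules out $p\in\{3,7\}$ and forces $p=2$. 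As $SL(2,7)$ has the unique involution $-I$, one has $z=-I\in Z(G)$ and $z^{A}=\{z\}$.

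Next I would fix an element $r\in G=SL(2,7)$ of order $8$ (such $r$ exists since $Q_{16}$ contains a cyclic subgroup of order $8$). Because $\langle r\rangle$ is cyclic and contains $z=r^{4}$, the elements $z,r^{2},r,r^{3}$ pairwise commute, so the $A$-orbits $z^{A},(r^{2})^{A},r^{A},(r^{3})^{A}$ are pairwise adjacent or equal in $\Gamma$. Looking at the orders $2,4,8,8$, the only possible coincidence among these four orbits is $r^{A}=(r^{3})^{A}$, so the essential step is to rule this out.

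For this I would use the identification $Aut(SL(2,7))\cong PGL(2,7)$: every automorphism of $SL(2,7)$ is realized by conjugation by an element of $GL(2,7)$ and hence preserves trace. A short computation with a primitive $8$th root of unity $\zeta\in\mathbb{F}_{49}$ gives $tr(r)=\zeta+\zeta^{-1}$ and $tr(r^{3})=\zeta^{3}+\zeta^{-3}$; each squares to $2$, so each equals $\pm 3$ in $\mathbb{F}_{7}$, and their sum is $\sum_{\gcd(k,8)=1}\zeta^{k}=0$ (the $x^{3}$-coefficient of $\Phi_{8}(x)=x^{4}+1$). Hence $tr(r)\neq tr(r^{3})$, so $r$ and $r^{3}$ lie in distinct $A$-orbits. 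Consequently $\Gamma[z^{A},(r^{2})^{A},r^{A},(r^{3})^{A}]=K_{4}$, producing four vertices of degree at least $3$ in $\Gamma$ and contradicting the uniqueness of the singular vertex. The main obstacle is this last trace computation, which is where the argument crucially uses the explicit structure of $Aut(SL(2,7))$ rather than just the abstract theory developed in Propositions \ref{Propo 2} and \ref{Propo 3}.
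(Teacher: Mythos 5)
Your proof is correct and follows essentially the same route as the paper: both arguments exhibit the induced $K_{4}$ on the four orbits of $r,r^{2},r^{3},r^{4}$ for an element $r$ of order $8$ in a generalized quaternion Sylow $2$-subgroup, the whole point being that $r^{A}\neq (r^{3})^{A}$. The only difference is how that non-fusion is certified --- the paper argues via $N_{Aut(G)}(\langle r\rangle )=SC_{Aut(G)}(\langle r\rangle )$, while you use trace-invariance under $Aut(SL(2,7))\cong PGL(2,7)$, which is a clean and correct alternative.
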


\begin{proof}
Let $G=SL(2,7)$ and suppose that $\Gamma $ is an $\mathcal{F}$-graph for
some $A\leq Aut(G)$. A Sylow $2$-subgroup $S$ of $G$ is generalized
quaternion of order $16$ and contains a unique cyclic subgroup $%
T=\left\langle t\right\rangle $ of order $8$. Notice that any two elements
of order $8$ in $T$ are conjugate to each other in $H=Aut(G)$ if and only if
they are conjugate by an element in $N_{H}(T)=SC_{H}(T).$ It follows that
the induced graph on the set of $A$-orbits represented by $%
t,t^{2},t^{3},t^{4}$ is $K_{4}$, which is impossible. This proves the claim.
\end{proof}

\subsection{\textbf{THE CASE WHERE $O_{p}(G)\neq 1$}}

\begin{propositionn}
\label{Propo 2**} If $O_{p}(G)\neq1$ then $z\in O_{p}(G).$
\end{propositionn}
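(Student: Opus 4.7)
The plan is to prove the statement by contradiction: assume $O_p(G) \neq 1$ and $z \notin O_p(G)$, and derive a contradiction. Note that by Proposition~\ref{Propo 2}(j), $z^A$ is $G$-invariant, so the assumption $z \notin O_p(G)$ is equivalent to $z^A \cap O_p(G) = \emptyset$ (since $O_p(G)$ is also $G$-invariant and $z^A$ cannot split across its boundary).

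First, I would fix a Sylow $p$-subgroup $P$ of $G$ and choose $u$ of order $p$ in $O_p(G) \cap Z(P) = C_{O_p(G)}(P)$, which is nontrivial because $P$ acts as a $p$-group on the nontrivial $p$-group $O_p(G)$. Then $u \in O_p(G)$, so $u^A \subseteq O_p(G)$, and under our standing contradiction hypothesis this forces $u^A \neq z^A$. Proposition~\ref{Propo 2}(c) then implies that $C_G(u)$ contains no element of order coprime to $p$; combined with $P \leq C_G(u)$ this gives $C_G(u) = P$.

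Next, pick any prime $q \in \pi(G) \setminus \{p\}$ (which exists by the standing hypothesis $|\pi(G)| \geq 2$) and any $q$-element $y \in G$. If there is some such $y$ with $C_{O_p(G)}(y) \neq 1$, I pick $w$ of order $p$ in this centralizer; then $w$ and $y$ commute with coprime orders, so Proposition~\ref{Propo 2}(c) yields $z^A \in \{w^A, y^A\}$, and since $y^A \neq z^A$ (distinct prime orders) we must have $w^A = z^A$, placing $z \in w^A \subseteq O_p(G)$ and giving the desired contradiction.

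The hard part will be ruling out the opposite case, where $C_{O_p(G)}(y) = 1$ for every $q$-element $y$ and every $q \in \pi(G) \setminus \{p\}$. In that case $O_p(G) \rtimes \langle y \rangle$ is a Frobenius group, and since by Proposition~\ref{Propo 2}(g) each Sylow $q$-subgroup of $G$ is elementary abelian, such a Sylow $q$-subgroup is a Frobenius complement and hence cyclic of order $q$. To close off this case, I would use Proposition~\ref{Propo 2}(g) to select $y_1 \in y^A$ commuting with some $z_1 \in z^A$, apply Proposition~\ref{Propo 2}(j) to assume $z_1 = z$, and analyze the coprime action of $\langle y_1 \rangle$ on the nontrivial subgroup $C_{O_p(G)}(z)$ (nontrivial because $z$ is a $p$-element acting on the $p$-group $O_p(G)$). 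Any common fixed point $w$ of $z$ and $y_1$ in $O_p(G) \setminus \{1\}$ would yield $w^A = z^A$ by the argument of the previous paragraph and hence $z \in O_p(G)$, so the task reduces to producing such a common fixed point---something I would pursue by combining the coprime action on $C_{O_p(G)}(z)$ with the tight constraints on triangles and on Sylow-$p$ exponent imposed by Proposition~\ref{Propo 2}(b) and (f) on the triangle $\{z^A, y^A, (zy_1)^A\}$ that the cyclic subgroup $\langle z, y_1 \rangle$ of order $pq$ forces in $\Gamma$.
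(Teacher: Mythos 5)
Your first half is fine: the observation that $z^A\cap O_p(G)=\emptyset$ forces every $q$-element ($q\neq p$) with a nontrivial fixed point on $O_p(G)$ to produce, via Proposition \ref{Propo 2}(c), an element of $z^A$ inside $O_p(G)$ is correct, and it is essentially the paper's opening step ($C_G(O_p(G))\leq O_p(G)$ and, more generally, that $O_p(G)$ is a Frobenius kernel in any subgroup $O_p(G)Y$ with $Y$ a $p'$-group). The problem is that everything of substance in this proposition lives in the case you defer to the last paragraph, and the strategy you propose there is self-defeating. In that case you have \emph{assumed} that every $q$-element acts fixed-point-freely on $O_p(G)$; hence there is provably no common fixed point of $z$ and $y_1$ in $O_p(G)\setminus\{1\}$, since such a point would in particular be a fixed point of $y_1$. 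The triangle $\Gamma[z^A,y_1^A,(zy_1)^A]$ contains the singular vertex $z^A$, so Proposition \ref{Propo 2}(b) and (f) impose no contradiction on it; the configuration ``$O_p(G)$ acted on Frobeniusly by all $p'$-elements, with $z$ outside $O_p(G)$ centralizing a $q$-element'' is locally consistent with all of your cited tools, so no amount of combining them will close the case.

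What is actually needed (and what the paper does) is a global argument on $\overline{G}=G/O_p(G)$: take a minimal $A$-invariant normal subgroup $\overline{M}$. If $\overline{M}$ is a $p'$-group, then $M=O_p(G)R$ is Frobenius with kernel $O_p(G)$ and complement $R$ of prime order; a Frattini argument gives $G=O_p(G)N_G(R)$, Proposition \ref{Propo 2}(g) puts an $A$-conjugate of $z$ into $C_G(R)=R\times S$ with $S$ a nontrivial $p$-group, and then $O_p(G)S$ turns out to be a normal $p$-subgroup of $G$, forcing $S\leq C_{O_p(G)}(R)=1$, a contradiction. If $\overline{M}$ is not a $p'$-group it must be nonabelian simple, whence $M$ is a nonsolvable Eppo-group with $O_p(M)\neq 1$; Theorem \ref{Theo 1} then pins $p=2$ and $\overline{M}$ down to $PSL(2,4)$, $PSL(2,8)$, $Sz(8)$, $Sz(32)$, and one eliminates these using the oddness of the relevant outer automorphism groups (to force $z$ to induce an inner automorphism and land in a normal $2$-subgroup) and, for $PSL(2,4)$, the fact that no involution of $S_5$ centralizes an element of order $5$. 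None of this machinery --- the minimal normal subgroup analysis, the Frattini argument, the Eppo classification --- appears in your sketch, so the proof has a genuine gap rather than a merely unpolished ending.
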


\begin{proof}
Suppose that $O_{p}(G)\neq 1$ and $z\notin O_{p}(G).$ Set $\overline{G}%
=G/O_{p}(G).$ We observe by Proposition \ref{Propo 2} $(c)$ that $%
C_{G}(O_{p}(G))\leq O_{p}(G)$.

Let $\overline{M}=M/O_{p}(G)$ be a minimal $A$-invariant normal subgroup of $%
\overline{G}.$ Assume first that $M\leq O_{p,p^{\prime }}(G)$. As $z\notin M$
the group $M$ is a Frobenius group with kernel $O_{p}(G)$ by Proposition \ref%
{Propo 2} $(c)$. It follows that $\overline{M}$ is elementary abelian and
hence is of order $r$ for some prime $r\neq p.$ Let $R$ be a Frobenius
complement of $O_{p}(G)$ in $M.$ Then $G=O_{p}(G)N_{G}(R).$ Note that an $A$%
-conjugate of $z$ centralizes $R$ by Proposition \ref{Propo 2} $(g)$. It
follows by Proposition \ref{Propo 2} $(c)$ that $C_{G}(R)=RS$ where $S$ is a
nontrivial $p$-group. Then $O_{p}(O_{p}(G)C_{G}(R))=O_{p}(O_{p}(G)(R\times
S))=O_{p}(G)S$ which leads to the contradiction that $S=1.$ Thus $\overline{M%
}$ is not a $p^{\prime }$-group. On the other hand $\overline{M}$ is not a $%
p $-group and hence is a direct product of isomorphic nonabelian simple
groups. As in the proof of Proposition \ref{Propo 3} $(a)$ we conclude that $%
\overline{M}$ is a nonabelian simple group. Then $z\notin M$ by Proposition %
\ref{Propo 3} $(b)$ and hence $M$ is a nonsolvable Eppo-group by Proposition %
\ref{Propo 2} $(d)$. As $M$ has a nontrivial normal $p$-subgroup, Theorem %
\ref{Theo 1} implies that $p=2$ and $\overline{M}$ is isomorphic to either $%
PSL(2,q)\;\text{for}\;q\in \{4,8\}\;\text{or}\;Sz(8)\;\text{or}\;Sz(32).$

Notice that the outer automorphism groups of $PSL(2,8)$, $Sz(8)$ and $Sz(32) 
$ are of odd order. Therefore $z$ induces an inner automorphism on $%
\overline{M}$ in each of these cases, that is, there exists $x\in M$ with $%
[M,xz]\leq O_{p}(G)$. By Proposition \ref{Propo 2} $(c)$ it holds that $%
xz\in z^{A}$. On the other hand, by a similar argument as above, one can see
that $O_{p}(G)\left\langle xz\right\rangle $ is a normal $p$-subgroup of $G$%
. Then $xz\in z^{A}\cap O_{p}(G)$ which is not the case. So we are left with
the case $\overline{M}\cong PSL(2,4)\cong A_{5}$ whence $\overline{G}\cong
S_{5}$ which is also impossible since no element of order $5$ is centralized
by an involution in $S_5$. This completes the proof.
\end{proof}

In this section we study the case where $O_{p}(G)\ne 1$ by examining the
subcases $O_{p}(G)\neq F(G)$ and $O_{p}(G)=F(G)$ separately. The next result
provides a precise description of the structure of $G$ when $O_{p}(G)$ and $%
O_{p^{\prime}}(F(G))$ are both nontrivial.

\begin{propositionn}
\label{Propo 6} If $1\neq O_{p}(G)\neq F(G)$ then $F(G)=P\times Q$ where $%
P=O_{p}(G)$ is an elementary abelian $p$-group, $Q=O_{p^{\prime}}(F(G))$ is
a Sylow $q$-subgroup of $G$ for another prime $q$ which is elementary
abelian, $G/P$ is a Frobenius group with kernel $F(G)/P$ and complement
either of prime order or a $p$-group which has a unique subgroup of order $p 
$. Furthermore both $P\setminus \{1\}$ and $Q\setminus \{1\}$ are $A$-orbits.
\end{propositionn}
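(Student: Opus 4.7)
The plan is to build $F(G)=P\times Q$ layer by layer, pin down the Frobenius structure of $G/P$, and classify the complement $G/F(G)$. Since $O_{p}(G)\ne 1$, Proposition~\ref{Propo 2**} gives $z\in P:=O_{p}(G)$. Because $F(G)\ne P$, there is a prime $q\ne p$ with $O_{q}(G)\ne 1$. I first identify $Q$ as the normal Sylow $q$-subgroup: any Sylow $q$-subgroup $Q$ is elementary abelian with $Q\setminus\{1\}$ a single $A$-orbit by Proposition~\ref{Propo 2}(g); picking a nontrivial element in $O_{q}(G)\cap Z(Q)$, its $A$-orbit is contained in the $A$-invariant subgroup $O_{q}(G)$ and equals $Q\setminus\{1\}$, so $Q=O_{q}(G)$ is normal. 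The same coprime-commuting argument via Proposition~\ref{Propo 2}(c) rules out a second prime divisor of $|O_{p'}(F(G))|$: two commuting elements of coprime orders from distinct $O_{r_i}(G)$'s would force $z^{A}$ to consist of $r_i$-elements, contradicting $|z|=p$.

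Next I handle $P$. Any $y\in P\setminus\{1\}$ commutes with every $w\in Q\setminus\{1\}$ because $[P,Q]\le P\cap Q=1$; since $y^{A}\ne w^{A}$ have coprime prime-power orders, Proposition~\ref{Propo 2}(c) forces $z^{A}$ to be one of them, and it cannot be the $q$-orbit $w^{A}$, so $y^{A}=z^{A}$. Hence $P\setminus\{1\}=z^{A}$, and since $Z(P)$ is $A$-invariant and nontrivial an $A$-conjugate of $z$ lies in $Z(P)$, giving $z^{A}\subseteq Z(P)$; thus $P=Z(P)$ is elementary abelian, and $F(G)=P\times Q$. For the Frobenius structure of $G/P$ it is enough to show that no element of $G\setminus F(G)$ centralizes any $w\in Q\setminus\{1\}$, because $[G,Q]\le Q$ makes $[g,w]\in P\cap Q=1$ whenever $gP$ centralizes $wP$. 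If $g\in G$ satisfies $[g,w]=1$, then either $g^{A}=w^{A}$ so $g$ is a $q$-element of the unique normal Sylow $Q\le F(G)$, or $g^{A}\ne w^{A}$ and Proposition~\ref{Propo 2}(c) forces $g\in z^{A}\subseteq P\subseteq F(G)$; either way $g\in F(G)$.

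The main obstacle is classifying the complement $H/P\cong G/F(G)$. Since $z^{A}\cap F(G)\ne\emptyset$, Proposition~\ref{Propo 2}(d) makes $G/F(G)$ an Eppo group, and it is a Frobenius complement acting fixed-point-freely on $Q$. I first argue that $G/F(G)$ is solvable: every nonsolvable Frobenius complement contains a copy of $SL(2,5)$, whose central involution commutes with a $5$-element and so produces an element of order $10$, contradicting Eppo. With $G/F(G)$ solvable Eppo, Theorem~\ref{Theo 1} says that if $|\pi(G/F(G))|\ge 2$ then $G/F(G)$ is Frobenius or $2$-Frobenius, and in either case it contains an inner Frobenius subgroup $K\rtimes L$ with $K$ an $r$-group and $L$ an $s$-group; since $G/F(G)$ is a Frobenius complement, its Sylow $r$-subgroup $K$ is cyclic (or generalized quaternion if $r=2$), so the unique order-$r$ subgroup of $K$ is characteristic and is normalized by an order-$s$ element of $L$, producing a non-abelian Frobenius subgroup of order $rs$. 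This violates the Burnside criterion that every order-$rs$ subgroup of a Frobenius complement is cyclic. Therefore $|\pi(G/F(G))|\le 1$, and as a Frobenius complement $G/F(G)$ is cyclic or generalized quaternion of prime-power order $p_{0}^{k}$, having a unique subgroup of order $p_{0}$. If $p_{0}=p$ this is the $p$-group clause; if $p_{0}\ne p$, then $p_{0}\nmid|F(G)|$, so a Sylow $p_{0}$-subgroup of $G$ is isomorphic to $G/F(G)$ and has exponent $p_{0}$ by Proposition~\ref{Propo 2}(g), collapsing it to a cyclic group of prime order $p_{0}$.
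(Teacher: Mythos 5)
Your proposal is correct in substance and actually fills in several steps that the paper's own proof leaves implicit. The paper's argument is very short: it identifies $Q$ via Proposition~\ref{Propo 2}(c),(g),(h), invokes Proposition~\ref{Propo 2**} to get $z^{A}\subset P$, deduces the Frobenius structure of $G/P$ from Proposition~\ref{Propo 2}(c), and then simply asserts the classification of the complement; it never spells out why $P$ is elementary abelian or why $P\setminus\{1\}$ is an $A$-orbit. Your derivation of $P\setminus\{1\}=z^{A}$ (commuting with $Q$ plus Proposition~\ref{Propo 2}(c)) and of $P=Z(P)$ (the orbit $z^{A}$ meets the $A$-invariant subgroup $Z(P)$) supplies exactly the missing content. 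Where you genuinely diverge is the complement: the paper leaves $|\pi(G/F(G))|=1$ unexplained, while you import the classical theory of Frobenius complements (nonsolvable ones contain $SL(2,5)$, hence an element of order $10$; every subgroup of order $rs$ is cyclic, which collides with the inner Frobenius subgroup of a solvable Eppo group with two primes). This is a clean and fully rigorous way to close that step, at the cost of an external input the paper does not cite.

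Two small points to repair. First, your dichotomy for $g\in C_{G}(w)$, $w\in Q\setminus\{1\}$, is misstated: an element such as $g=zw$ has order $pq$, centralizes $w$, and lies in neither $w^{A}$ nor $z^{A}$, so neither branch applies as written. The conclusion $C_{G}(w)\leq F(G)$ is still true: by Proposition~\ref{Propo 2}(c) the group $C_{G}(w)$ is a $\{p,q\}$-group, and decomposing $g$ into its $p$-part and $q$-part puts the former in $z^{A}\subseteq P$ and the latter in $Q$, so $g\in PQ=F(G)$. Second, for $G/P$ to be a Frobenius group you need a nontrivial complement, i.e.\ $G\neq F(G)$; the paper gets this from the standing hypothesis that $\Gamma$ has a singular vertex (so $\Gamma\neq C_{3}$, whereas $G=P\times Q$ abelian would force $\Gamma=C_{3}$). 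Neither issue affects the overall validity, but both should be stated.
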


\begin{proof}
By hypothesis $Q\ne 1$. Since $Q$ is an $A$-invariant, normal and nilpotent
subgroup it follows by $(c),(g)$ and $(h)$ of Proposition \ref{Propo 2} that 
$Q\ $is an elementary abelian $q$-group for some prime $q$ and is a Sylow $q$%
-subgroup of $G$. Thus we have $F(G)=Q\times P.$ Note that $\Gamma\ne C_{3}$
and hence $F(G)\ne G$. We observe that $z^{A}\subset P$ by Proposition \ref%
{Propo 2**}. Then $C_{F(G)/P}(xP)=1$ for any $x\in G\setminus F(G)$ by
Proposition \ref{Propo 2} $(c)$, that is, $G/P$ is a Frobenius group with
kernel $F(G)/P$. This forces that $\left\vert \pi(G/F(G))\right\vert =1$,
and if $\pi(G/F(G))\neq\{p\}$ then $G/F(G)$ is cyclic of prime order as
claimed.
\end{proof}

\begin{example}
Let $G=Q\rtimes T$ where $Q=\left\langle \sigma\right\rangle $ is of order $%
3 $ and $T=\left\langle \tau\right\rangle $ is of order $4$ and $%
\sigma^{\tau }=\sigma^{-1}.$ Then $F(G)=P\times Q$ where $P=\left\langle
\tau^{2}\right\rangle$, and $G/O_{2}(G)$ is the Frobenius group of order $6$%
. There exists an automorphism $\alpha$ of $G$ such that $\alpha$ fixes $%
\sigma$ and inverts $\tau.$ Let $t$ denote the inner automorphism of $G$
induced by $\tau$ and $A=\left\langle t,\alpha\right\rangle$. Then $%
\Gamma(G,A)$ is an $\mathcal{F} $-graph with $5$ vertices, one triangle $%
\Gamma[({\tau}^2)^A, \sigma^A, ({\tau}^2\sigma)^A]$ and two $P_2$'s, namely $%
\Gamma[\tau^A, ({\tau}^2)^A]$ and $\Gamma [({\tau}^2)^A, (\tau\sigma)^A]$.
\bigskip
\end{example}

The rest of this subsection is devoted to the case where $1\neq
O_{p}(G)=F(G) $. First we handle the subcase where $C_{G}(F(G))\leq F(G)$.

\begin{propositionn}
\label{Propo 7} Suppose that $1\neq O_{p}(G)=F(G)$ and that $C_{G}(F(G))\leq
F(G)$ holds. Set $\overline{G}=G/F(G).$ Then one of the following holds.

\begin{enumerate}
\item[(a)] $\overline{G}$ has a normal subgroup $\overline{M}$ of prime
order which is complemented in $\overline{G}$ by a $p$-group of order
dividing $p$ and acting Frobeniusly on it.

\item[(b)] $\overline{G}\cong SL(2,4)$,\thinspace\ $p=2$,\thinspace\ $%
Z(G)=z^{A}\cup \{1\},\;1\neq F(G)/Z(G)$ is a direct sum of natural
irreducible modules of $\overline{G}.$ Furthermore $G$ does not split over $%
Z(G)$.

\item[(c)] $\overline{G}\cong SL(2,4),\;p=5,\;F(G)$ is elementary abelian, $%
F(G)\setminus \{1\}$ is the union of two $A$-orbits, all $p$-elements in $%
G\setminus F(G)$ lie in the same $A$-orbit, $z^{A}\cap Z(S)=\phi $ for any $%
S\in Syl_{p}(G)$, and $A/C_{A}(\overline{G})$ is isomorphic to $S_{4}$ or $%
S_{5}$.
\end{enumerate}
\end{propositionn}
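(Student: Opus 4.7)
The plan is to pass to $\overline{G}=G/F(G)$ and exploit its structure. Since $z\in O_p(G)=F(G)$ by Proposition~\ref{Propo 2**}, the orbit $z^A$ is contained in $F(G)$, so Proposition~\ref{Propo 2}(d) applied with $M=G$, $N=F(G)$ shows that $\overline{G}$ is an Eppo-group. The hypothesis $C_G(F(G))\leq F(G)$ ensures that $\overline{G}$ acts faithfully on $F(G)$, while $O_p(\overline{G})=1$ because any normal $p$-subgroup of $\overline{G}$ would pull back to a normal $p$-subgroup of $G$ strictly containing $F(G)$. Hence the candidates for $\overline{G}$ are exactly those listed in Theorem~\ref{Theo 1}, with case (d) requiring $p\neq 2$. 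Since $|\pi(G)|\geq 2$, we have $\overline{G}\neq 1$, and the proof splits according as $\overline{G}$ is solvable or not.

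In the solvable case $\overline{G}$ is either Frobenius or $2$-Frobenius with $|\pi(\overline{G})|=2$. For every prime $q\in\pi(\overline{G})\setminus\{p\}$, Proposition~\ref{Propo 2}(g) forces a Sylow $q$-subgroup of $G$ to be elementary abelian, and it projects isomorphically to its image in $\overline{G}$; this is incompatible with the nested Frobenius structure of a $2$-Frobenius group together with $O_p(\overline{G})=1$, eliminating that possibility. In the Frobenius case $\overline{G}=\overline{K}\rtimes\overline{H}$, the kernel must be a $q$-group for some $q\neq p$, and its characteristic lift $Q'\triangleleft G$ has $Q'\setminus\{1\}$ as a single $A$-orbit adjacent to $z^A$. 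Combined with the fixed-point-free action of $\overline{H}$ and Proposition~\ref{Propo 2}(c) bounding centralizers of complement elements, this pins $|\overline{K}|$ to be a prime and $\overline{H}$ to be either trivial or of order $p$ acting Frobeniusly, producing conclusion~(a).

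In the nonsolvable case Proposition~\ref{Propo 3}(a) shows that the unique nonsolvable $A$-chief factor of $\overline{G}$ is simple, so $\overline{G}$ is a simple Eppo-group from Theorem~\ref{Theo 1}(c) or (when $p\neq 2$) an $O_2$-extension of one from (d). The sieve is again Proposition~\ref{Propo 2}(g): for each $q\neq p$ in $\pi(\overline{G})$ the Sylow $q$-subgroup of $\overline{G}$ must be elementary abelian. Lemma~\ref{Lemma1} applied with $N=F(G)$ (characteristic in $G$) rules out $\overline{G}\cong Sz(8)$; inspecting Sylow structure excludes $PSL(2,q)$ for $q\in\{7,8,9,17\}$, $PSL(3,4)$, $Sz(32)$, and $M_{10}$, because each of these has at least two distinct primes whose Sylow subgroups are either nonabelian or cyclic of composite order, and at most one such prime can equal $p$. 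The survivor is $\overline{G}\cong PSL(2,5)\cong SL(2,4)$, and a detailed analysis of $A$-orbits on $F(G)$ together with the star shape of the Gr\"unberg--Kegel graph (Proposition~\ref{Propo 2}(i)) narrows $p$ to $\{2,5\}$.

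The most delicate step, and the main obstacle, is the module-theoretic analysis of $F(G)$ that separates (b) from (c). If $p=2$, Proposition~\ref{Propo 2}(k) together with the uniqueness of the singular vertex forces $z\in Z(G)$, and a careful count of $A$-orbits on $F(G)\setminus\{1\}$—any extra orbit would either create a second complete vertex or a forbidden $K_{1,3}$ at $z^A$—shows $Z(G)=z^A\cup\{1\}$ and that $F(G)/Z(G)$ is a direct sum of natural $4$-dimensional $\mathbb{F}_2$-modules for $SL(2,4)$, while non-splitting over $Z(G)$ is obtained from the same orbit count. If $p=5$, the failure of Proposition~\ref{Propo 2}(k)'s hypothesis yields $F(G)$ elementary abelian, whose non-identity elements inherit exactly the two $\overline{G}$-orbits on its natural $4$-dimensional $\mathbb{F}_5$-module; the subtlest point is to identify $A/C_A(\overline{G})$ as $S_4$ or $S_5$, for which I will show that smaller subgroups of $Aut(A_5)=S_5$ would leave too many $A$-orbits on the $p$-elements outside $F(G)$, violating the $\mathcal{F}$-graph condition.
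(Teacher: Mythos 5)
Your overall skeleton (reduce to the Eppo-group $\overline{G}=G/F(G)$ via Proposition \ref{Propo 2}(d), split into solvable and nonsolvable cases, and sieve the candidates of Theorem \ref{Theo 1} using Proposition \ref{Propo 2}(g)) matches the paper's, but the decisive step in the nonsolvable case fails. You claim that $PSL(2,q)$ for $q\in\{7,8,9,17\}$, $PSL(3,4)$, $Sz(32)$ and $M_{10}$ are all excluded because ``each of these has at least two distinct primes whose Sylow subgroups are either nonabelian or cyclic of composite order, and at most one such prime can equal $p$.'' This is false for most of the list: $PSL(2,7)$, $PSL(2,9)$, $PSL(3,4)$ and $M_{10}$ each have exactly \emph{one} offending prime (namely $2$, with nonabelian Sylow $2$-subgroups; their odd Sylow subgroups are elementary abelian or of prime order), and $PSL(2,8)$ has only the prime $3$. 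So taking $p$ equal to that single prime defeats your sieve, and these groups survive. The paper must work much harder here: it first proves $z\in Z(G)$ by showing that $|G:C_G(z)|$ is forced (via Proposition \ref{Propo 2}(g) and the Sylow structure of each candidate) to be smaller than the minimal index of a proper subgroup of the simple group $\overline{G}$; it then applies Proposition \ref{Propo 2}(e) to see that $G/Z(G)$ is a nonsolvable Eppo-group with the nontrivial normal $p$-subgroup $F(G)/Z(G)$, so Theorem \ref{Theo 1}(d) forces $p=2$ and $\overline{G}\in\{SL(2,4),SL(2,8),Sz(8),Sz(32)\}$, after which element orders and Lemma \ref{Lemma1} leave only $SL(2,4)$. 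Without some substitute for this argument your identification of $\overline{G}$, and hence the whole separation into cases (b) and (c), does not go through. Relatedly, your appeal to ``Proposition \ref{Propo 2}(k) together with the uniqueness of the singular vertex'' to get $z\in Z(G)$ when $p=2$ is not a valid deduction: (k) assumes $z\in Z(P)$ and concludes only that $z^A$ is a complete vertex contained in $O_p(G)$.

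Two further points are asserted rather than proved. In the solvable case the genuinely hard content is (i) that the Frobenius kernel $\overline{M}$ has \emph{prime} order, which the paper obtains by decomposing $F(G)$ into homogeneous $Q$-components and using a weight argument to show the Sylow $q$-subgroup is cyclic, and (ii) that the complement has order dividing $p$ rather than merely being a $p$-group with a unique subgroup of order $p$, which requires the argument that $x^A,(x^p)^A,(x^{-1})^A$ cannot be three distinct mutually adjacent vertices and that an inverting automorphism would force a nonabelian automorphism group on the cyclic section $\overline{M}$. Your one-sentence appeal to ``fixed-point-free action and Proposition \ref{Propo 2}(c)'' does not supply either. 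Finally, you leave open the possibility $O_2(\overline{G})\neq 1$ with $p$ odd (Theorem \ref{Theo 1}(d)); the paper's Step 2 eliminates it by noting that for $p$ odd a Sylow $2$-subgroup of $G$ has exponent $2$ by Proposition \ref{Propo 2}(g), hence is elementary abelian and would act trivially on $O_2(\overline{G})$.
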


\begin{proof}
Let $P=F(G)$. By Proposition \ref{Propo 2**} we know that $z\in P$ and $%
\overline{G}$ is an Eppo-group.\newline

\textit{Step 1. If }$G$\textit{\ is solvable then }$(a)$\textit{\ holds.}%
\newline

\textit{Proof.} Suppose that $G$ is solvable. Then $\overline{G}$, being a
solvable Eppo-group, is either a $q$-group for some prime $q\neq p$ or is a
Frobenius or a $2$-Frobenius group with $\left\vert \pi (\overline{G}%
)\right\vert =2.$ Let $\overline{M}$ be a minimal normal $A$-invariant
subgroup of $\overline{G}.$ It is an elementary abelian $q$-group for some
prime $q\neq p$ and is a Sylow $q$-subgroup of $\overline{G}$ by Proposition \ref{Propo
2} $(h).$ This shows that $\overline{G}$ cannot be $2$-Frobenius with $%
\left\vert \pi (\overline{G})\right\vert =2$ and hence we have either $%
\overline{G}$ is a $q$-group or $\overline{G}$ is a Frobenius group with
kernel $\overline{M}$ whose complement is an $r$-group for some prime $r.$
If $r\neq p$ then $|G/M|=r$ by Proposition \ref{Propo 2} (g). If $r=p$ then $G/M$ is
isomorphic to a $p$-group which has a unique subgroup of order $p$. For any $
Q\in Syl_{q}(G)$ and $R\in Syl_{r}(N_{G}(Q))$ we have $M=PQ$ and $G=MR.$

Suppose that either $Q$ is noncyclic or $r\neq p.$ Then $N=\left\langle
C_{N}(x):1\neq x\in Q\right\rangle $ or $C_{[N,Q]}(R)\neq 1$ for any $QR$-invariant section $N$ of $P.$ This shows that $\Phi (P)=1$ because
otherwise both $\Phi (P)$ and $P\setminus \Phi (P)$ contain $A$-conjugates
of $z$ which is impossible. So $P$ is an elementary abelian $p$-group. Let $
P=P_{1}\oplus P_{2}\oplus \cdots \oplus P_{k}$ be the decomposition of $P$
into the sum of homogeneous $Q$-components. For any $x=x_{1}+\cdots
+x_{k}\in P$ with $x_{i}\in P_{i},\,i=1,2,\ldots ,k$, we define the weight
of $x$ as $\left\vert \{i:x_{i}\neq 1\}\right\vert $. As $A$ acts on the set 
$\{P_{1},\ldots ,P_{k}\}$ we see that $A$ stabilizes the sets of elements of 
$P$ of the same weight.\ Therefore $%
v_{1}^{A},(v_{1}+v_{2})^{A},(v_{1}+v_{2}+v_{3})^{A}$ where $1\neq v_{i}\in
P_{i}$ for $i=1,2,\ldots ,k$, are different $A$-orbits. If $Q$ is noncyclic then $%
C_{Q}(P_{i})$ is nontrivial for each $i=1,2,\ldots ,k$. It follows that $%
k=1\,\ $because otherwise $z^{A}=v_{1}^{A}\neq (v_{1}+v_{2})^{A}=z^{A}$
where $1\ne v_{i}\in C_{Q}(P_{i}),i=1,2.$ Then $P_{1}=P$ and $%
C_{Q}(P)\neq 1$ which is not possible. Thus $Q$ is cyclic. Clearly $C_{G}(z)$
has order divisible by $q$ and $r$ if $r\neq p$ and hence $%
C_{G}(z)=G$. Now $z^{A}\subset Z(G)<P$ as $C_{G}(P)\leq P.$ But then $%
z\notin N=[P,Q]\neq 1$. As $QR$ is a Frobenius group we obtain that $%
C_{N}(R)\neq 1$. This yields a contradiction if $r\neq p$ because it implies
that $N$ contains an $A$-conjugate of $z$ which is not possible. Thus $r=p$.
Since $Q$ is cyclic we see that $R$ is a cyclic $p$-group. If possible, let $%
x$ be a $p$-element such that $p^{2}$ divides the order of $xP$. If $\
x^{A},(x^{p})^{A},(x^{-1})^{A}$ are pairwise different $A$-orbits then they
are adjacent to each other and hence one of them must be $z^{A}$ which is
not possible. So there must exist an element $a\in A$ such that $%
x^{a}=x^{-1}$. But then the subgroup of the semidirect product $GA$
generated by $x$ and $a$ must induce a group $L$ of automorphisms on the
cyclic section $\overline{M}$ which must be isomorphic to a cyclic group
of order dividing $q-1$. Of course this is not possible as $L$ is not abelian. Therefore $|\overline{G}:$ $\overline{M}|$ divides $p.$\\

\textit{Step 2. If }$G$\textit{\ is nonsolvable we have }$O_{2}(\overline{G}%
)=1$\textit{\ and hence }$\overline{G}$ \textit{is either a simple
Eppo-group or isomorphic to }$M_{10}$.\newline

\textit{Proof. }If $G$ is nonsolvable the assumption $O_{2}(\overline{G}
)\neq1\ $leads to $p\neq2$ as $P=O_{p}(G)$. Therefore a Sylow $2$-subgroup
of $G$ is of exponent $2$ and hence is elementary abelian. This implies that
a Sylow $2$-subgroup of $\overline{G}/O_{2}(\overline{G})$ acts trivially on 
$O_{2}(\overline{G})$ which is not possible. Then $O_{2}(\overline{G})=1$ and
hence $\overline{G}$ is either a simple Eppo-group or isomorphic to $M_{10} $%
.\newline

\textit{Step 3. If }$G$\textit{\ is nonsolvable and }$C_{G}(z)$\textit{\
contains a Sylow }$p$\textit{-subgroup of }$G$\textit{\ then} $(b)$ 
\textit{holds.}\newline

\textit{Proof. } We shall first observe that $z\in Z(G)$ under the
assumption that $C_{G}(z)$ contains a Sylow $p$-subgroup of $G$: For any
prime $q\in \pi (G)\setminus \{p\}\subset \pi (\overline{G}),$ a
Sylow $q$-subgroup of $G$ is a group of exponent $q$ which is isomorphic to
a Sylow $q$-subgroup of $\overline{G}$ where $q$ divides $|C_{G}(z)|.$

Assume now that the Sylow $2$-subgroups of $\overline{G}$ are not elementary
abelian. Then $p=2$ and by Theorem \ref{Theo 1}, the group $\overline{G}$ is
isomorphic to one of the following groups:%
\begin{equation*}
PSL(2,7),PSL(2,9),PSL(2,17),PSL(3,4),Sz(8),Sz(32),M_{10}
\end{equation*}%
We can eliminate the groups $PSL(2,17)$ and $Sz(32)$ from this list because
they contain cyclic groups of orders $9$ and $25$, respectively. Among the
remaining groups, $PSL(2,9),$ $M_{10}$ and $PSL(3,4)$ have elementary
abelian Sylow $3$-subgroups of order $9$ and cyclic Sylow subgroups for
primes different from $2$ and $3$. So $|G:C_{G}(z)|$ divides $3$ and hence
is $1.$ The others, namely $PSL(2,7)$ and $Sz(8)$, have cyclic Sylow
subgroups for odd primes. Therefore $G=C_{G}(z)$ in case
where the Sylow $2$-subgroups of $\overline{G}$ are not elementary abelian.

Assume next that the Sylow $2$-subgroups of $\overline{G}$ are abelian. Then 
$\overline{G}\cong PSL(2,5)$ by Proposition \ref{Prop 1}. We get $%
[G:C_{G}(z)]\leq 4$ and hence $z\in Z(G)$ establishing the first claim.

As $z\in Z(G)$, we have $\left\langle z^{A}\right\rangle \leq Z(G)=z^{A}\cup
\{1\}.$ In this case $Z(G)$ is properly contained in $P$ since $C_{G}(P)\leq
P$. Hence, by Proposition \ref{Propo 2} $(e)$, $G/Z(G)$ is a nonsolvable
Eppo-group with nontrivial normal subgroup $P/Z(G)$. In particular $p=2$ and 
$\overline{G}$ is isomorphic to one of the groups $%
SL(2,4),Sz(8),SL(2,8),Sz(32)$. The last two can be eliminated from this list
because they contain elements of orders $9$ and $25$ respectively. $Sz(8)$
cannot also occur by Lemma \ref{Lemma1}.

To complete the proof of Step 3 it remains only to show that $G$ does not
split over $Z(G)$. Assume the contrary, that is, assume that $G=H\times Z(G)$
for some subgroup $H$ which is isomorphic to $G/Z(G)$. Since $G/Z(G)$ is a
perfect group we see that $H=H^{\prime }=G^{\prime }$ is $A$-invariant. As $%
H $ contains an element $h$ of order $4$, the induced graph on
the set of $A$-orbits represented by $h,h^{2},hz,h^{2}z$ is $K_{4}$. This
contradiction completes the proof. \newline

\textit{Step 4. If }$G$\textit{\ is nonsolvable and }$C_{G}(z)$ \textit{does
not contain a Sylow }$p$\textit{-subgroup of }$G$\textit{, then }$(c)$%
\textit{\ holds.}\newline

\textit{Proof. } Let $N$ be an $A$-invariant minimal normal subgroup of $G$
contained in $P$. Clearly $N\leq Z(P)$. Let $S\in Syl_{p}(G)$ and $1\neq
x\in N\cap Z(S).$ Then we have $x^{A}\neq z^{A}.$ Assume that $A$ has $k$
orbits in $N\setminus \{1\}$. Note that $N<S$ because otherwise $z\in Z(S)$.
Then for any $y\in S\setminus N$ the orbit $y^{A}$ is adjacent to $x^{A}$
and is different from the vertices contained in the set $N.$ As $\deg
(x^{A})\leq 2$ we see that $k\leq 2.$ If $k=1,$ then $z^{A}\cap N=\emptyset $
and hence $N<P$ as $z\in P$. On the other hand, $\overline{G}$ acts on $N$
in such a way that every $p^{\prime }$-element of $\overline{G}$ is fixed
point free on $N$. Then $E=N\rtimes \overline{G}$ is a nonsolvable
Eppo-group with nontrivial normal subgroup $N$. We see by Theorem \ref{Theo
1} that $p=2.$ As $\overline{G}$ is nonsolvable, $|\overline{G}|$ is even
and hence $P<S$. Again the fact that $\deg (x^{A})\leq 2$ yields $P\setminus
N=z^{A}$ and $S\setminus P\subset y^{A}$ for any $y\in S\setminus P.$ This
implies in particular that the exponent of $\overline{S}$ of $\overline{G}$\
must be $2$ and hence $\overline{S}$ is abelian. Thus we have either $%
\overline{G}\cong PSL(2,5)$ or $\overline{G}\cong PSL(2,8).$ The latter
cannot occur since $PSL(2,8)$ contains elements of order $9$ and $p=2.$ On
the other hand as $P\setminus N=z^{A}$ we see that $\exp (P)=2$ which
implies that $P\leq C_{G}(z)$. Hence $|G:C_{G}(z)|\leq 4,$ as $3$ and $5$
have to divide $|C_{G}(z)|.$ But the simple group $\overline{G}$ cannot have
a proper subgroup of index less than $5$ and so $z\in Z(G)$ in case where $%
k=1$. Therefore we must have $k=2$, that is, $N\setminus \{1\}$ is the union
of two $A$-orbits. \ \ 

If $N<P$ then $S=P$ since $\deg (x^{A})\leq 2$. This means $p$ does not divide $%
\left\vert \overline{G}\right\vert $ which shows in particular that $p$ is
odd. Hence a Sylow $2$-subgroup of $G$ is noncyclic, elementary abelian.
If $T\in Syl_{2}(G)$ then we have  $P/N=\left\langle C_{P/N}(x):1\neq x\in
T\right\rangle $ where $N=\left\langle C_{N}(x):1\neq x\in T\right\rangle $
which implies that ($P\setminus N)\cap z^{A}\neq \emptyset \neq N\cap z^{A}$%
, a contradiction. Thus $P=N$ and hence $P$ is abelian. As $z\notin Z(S)$,
we see that $P<S.$ We also clearly have $P\setminus \{1\}=x^{A}\cup z^{A}$
for some $x\in P\cap Z(S)$.

If Sylow $r$-subgroups of $\overline{G}$ are not of exponent $r$ then $p=r$
and $S\setminus P$ intersects at least two $A$-orbits, corresponding to
elements of order $p$ and $p^{2}$ nontrivially, which is not the case as $%
\deg(x^{A})\leq2$. So every Sylow subgroup of $\overline{G}$ is of prime
exponent, in particular Sylow $2$-subgroups of $\overline{G}$ are elementary
abelian and hence $\overline{G}$ is isomorphic to either $PSL(2,5)$ or $%
PSL(2,8)$. The latter case cannot occur because a Sylow $3$-subgroup of $%
PSL(2,8)$ is cyclic of order $9$. If $p=2$ then $3$ and $5$ divide the order
of $C_{G}(z)$ and hence $|G:C_{G}(z)|$ divides $4$ which is impossible since 
$PSL(2,5)$ has no proper subgroup of index less than $5$. Therefore $p$ is odd, $%
\overline{G}\cong PSL(2,5)$, and $|G:C_{G}(z)|$ divides $2p$ and is
divisible by $p$.

Let $yP$ be a nonidentity element in $\overline{G}$ where $y$ is a $p$%
-element in $G\setminus P,$ and let $S_{1}\in Syl_{p}(G)$ with $y\in S_{1}$. Then
clearly $P\trianglelefteq S_{1}$ and $y^{A}$ is adjacent to $v^{A}$ for any $%
v\in Z(S_{1})\cap P.$ Recall that $P\setminus \{1\}=x^{A}\cup z^{A} $ for
some $x\in P\cap Z(S)$. As $C_{G}(z)$ does not contain a Sylow $p$-subgroup
of $G$ we have $v^{A}=x^{A}$. Since $\deg(x^A)=2$ it holds that all the $p 
$-elements in $G\setminus P$ lie in the same $A $-orbit, in particular all
the $p$-elements in $G\setminus P$ are of the same order.

Let $T\in Syl_{2}(G)$. Clearly $T$ is elementary abelian of order $4$ as $p$
is odd and we have 
\begin{equation*}
\lbrack P,T]=P_{1}=C_{P_{1}}(t_{1})\oplus C_{P_{1}}(t_{2})\oplus
C_{P_{1}}(t_{3})
\end{equation*}%
where $T=\{1,t_{1},t_{2},t_{3}\}=\langle t_{1},t_{2}\rangle .$ Notice that $%
C_{P_{1}}(t_{1})$ is $T$-invariant, and $t_{2}$ acts fixed point freely on $%
C_{P_{1}}(t_{1})$ whence $t_{2}$ inverts elements of $C_{P_{1}}(t_{1})$.
Since nontrivial elements of $C_{P_{1}}(t_{1})$ are conjugate to $z$ we see
that $N_{G}(\left\langle z\right\rangle )$ contains $C_{G}(z)$ properly . So
we have $[G:N_{G}(\left\langle z\right\rangle )]=p$. It follows that $p=5$
because $G$ cannot have a subgroup of index less than $5.$

Clearly $A$ normalizes $F(G)$ and induces automorphisms on $\overline{G}.$
Let $A_{1}=C_{A}(\overline{G}).$ Then $\overline{A}=$ $A/A_{1}\leq Aut(%
\overline{G})\cong S_{5}$. We see that the involutions of $T$ must lie in
the same $A$-orbit and for that purpose it is sufficient and necessary that
there exists an element in $A/A_{1}$ that leaves $\overline{T}$ invariant
and acts on it as an automorphism of order $3.$ Observe that the normalizer
of a Sylow 2-subgroup of $A_{5}$ in $S_{5}$ is isomorphic to $S_{4}$. So the
number of Sylow 3-subgroups of $\overline{A}\leq S_{5}$ is either $1$ or $4$ or $10$. The first
cannot occur since a Sylow $3$-subgroup of $S_{5}$ normalizes only $2$ Sylow
$2$-subgroups and acts transitively on the remaining $3$ Sylow $2$-subgroups. On
the other hand we know that $\overline{A}$ is not contained in $Inn(%
\overline{G})\cong A_{5},$ because if $g$ is an element of order a power of $%
5$ such that $\overline{g}$ an element of order $5$ of $\overline{G}$ then $%
\overline{g}$ and $\overline{g}^{2}$ are not conjugate in $Inn(\overline{G})$
and hence $g^{A}$ and $(g^{2})^{A}$ are two adjacent, different vertices,
which is impossible. Therefore $\overline{A}$ is isomorphic to $S_{4}$ or $S_{5}$
establishing the claim.
\end{proof}

\begin{remark}
It is very desirable to pin down the structure of $F(G)$ in part (b) of the
above theorem and decide whether this case can really occur. The same
question about the possible nonoccurence exists also in part (c) in the
light of our knowledge of irreducible $S_{5}$-modules over $GF(5).$ These
contain more than two $S_{5}$-orbits of nontrivial elements. On the other
hand $F(G)$ is a minimal normal subgroup of $GA$ and hence is a homogeneous $%
\overline{G}$-module and also a homogeneous $C_{A}(\overline{G})$-module.
\end{remark}

.

\begin{propositionn}
\label{Propo 8} If $1\neq O_{p}(G)=F(G)$ and $C_{G}(F(G))\nleq F(G)$ then
either $G$ is quasisimple and isomorphic to one of $SL(2,5), 2PSL(3,4), 
2^{2}PSL(3,4)$ or $G=O_{p}(G)\times E(G)$ where $O_{p}(G)=\left\langle
z^{A}\right\rangle $ and $E(G)\cong PSL(2,5).$
\end{propositionn}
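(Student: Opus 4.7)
The plan is to use the generalized Fitting subgroup $F^{*}(G)=F(G)E(G)$ to extract the layer, show it is quasisimple, and then split on the structure of a Sylow $2$-subgroup as in Proposition \ref{Prop 1}. Since $C_{G}(F^{*}(G))\le F^{*}(G)$ while $C_{G}(F(G))\not\le F(G)$, one obtains $E(G)\ne 1$, and $E(G)\cap F(G)=Z(E(G))$ is a $p$-group. If $E(G)$ had two components they would centralize each other, and because each component modulo its centre is a nonabelian simple group divisible by at least three primes, one could pick commuting elements of distinct prime orders, both different from $p$; Proposition \ref{Propo 2}(c) would then force one of them to lie in $z^{A}$, contradicting $|z|=p$. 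Hence $E(G)$ is quasisimple; write $S=E(G)/Z(E(G))$.

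By Proposition \ref{Prop 1}, first assume a Sylow $2$-subgroup of $G$ is abelian. Then $S\cong PSL(2,5)$, and since $SL(2,5)$ has nonabelian Sylow $2$-subgroup, $E(G)=PSL(2,5)$ with $Z(E(G))=1$. Put $L=C_{G}(E(G))\supseteq F(G)$; the embedding $G/L\hookrightarrow\mathrm{Aut}(PSL(2,5))=S_{5}$ together with Eppo-ness of $G/F(G)$ rules out $S_{5}$ and yields $G=E(G)\times L$. A prime-power element $y\in L\setminus F(G)$ paired with some $v\in E(G)$ of coprime prime order in $\{2,3,5\}$ would, by Proposition \ref{Propo 2}(c), force $y\in z^{A}\subseteq F(G)$, a contradiction; so $L=F(G)$. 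The same coprime pairing applied to $w\in F(G)\setminus\{1\}$ forces $F(G)\setminus\{1\}=z^{A}$, giving $F(G)=\langle z^{A}\rangle$, which is the second alternative.

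Now suppose a Sylow $2$-subgroup of $G$ is nonabelian, so $p=2$ and every odd Sylow subgroup of $G$ has prime exponent. Then $S$ is a simple Eppo group by Theorem \ref{Theo 1}. The prime-exponent condition immediately eliminates $PSL(2,8)$, $PSL(2,17)$ and $Sz(32)$ (each has a cyclic subgroup of non-prime odd order); and Lemma \ref{Lemma1} applied to $G/F(G)$ (which must equal $Sz(8)$ itself, since any proper outer extension of $Sz(8)$ fails to be Eppo) eliminates $Sz(8)$. If $Z(E(G))=1$, the argument from the abelian case reappears to give $G=E(G)\times F(G)$ with $F(G)$ elementary abelian of exponent $2$; for $E(G)=PSL(2,5)$ this makes a Sylow $2$-subgroup of $G$ abelian, contrary to hypothesis, while for $E(G)\in\{PSL(2,7),PSL(2,9),PSL(3,4)\}$, any involution $t\in E(G)$ lying at the centre of some Sylow $2$-subgroup $U$ of $E(G)$ is centralized by an element $s\in U$ of order $4$, so that $z^{A},(tz)^{A},s^{A}$ are three distinct vertices adjacent to $t^{A}$, contradicting the $\mathcal{F}$-graph hypothesis. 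Hence $Z(E(G))\ne 1$, and since any element of order $\ge 4$ in $Z(E(G))$ paired with an odd-prime-order element of $E(G)$ would again violate Proposition \ref{Propo 2}(c), $Z(E(G))$ is elementary abelian.

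It remains to show $G=E(G)$. The coprime-pairing argument applied to any $w\in F(G)\setminus Z(E(G))$, combined with the observation that every central involution of $E(G)$ must lie in $z^{A}$, forces $F(G)=Z(E(G))$. Then $G/C_{G}(E(G))\hookrightarrow\mathrm{Aut}(E(G))$ and the Eppo-ness of $G/F(G)$ cuts this image down to $\mathrm{Inn}(E(G))\cong S$, so $G=E(G)\cdot C_{G}(E(G))$. Applying Proposition \ref{Propo 2}(c) to an element of order $4$ in $E(G)$ forces $C_{G}(E(G))$ to be a $2$-group, hence normal and contained in $F(G)=Z(E(G))\le E(G)$, so $G=E(G)$ is quasisimple. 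Finally, Lemmas \ref{Lemma2} and \ref{Lemma3} rule out $G\cong SL(2,9)$ and $SL(2,7)$ respectively, and the elementary-abelian constraint on $Z(E(G))$ excludes all higher $2$-covers of $PSL(3,4)$, leaving exactly $G\in\{SL(2,5),\,2.PSL(3,4),\,2^{2}.PSL(3,4)\}$. The main obstacle is the detailed centralizer analysis that rules out the trivial-centre case (especially for $PSL(3,4)$) and the reduction $G=E(G)$.
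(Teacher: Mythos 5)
Your overall strategy is the same as the paper's: extract the layer $E(G)$ of $F^{*}(G)$, show it is quasisimple with $Z(E(G))=E(G)\cap F(G)$ contained in $z^{A}\cup\{1\}$, run the quotient $G/F(G)$ through the Eppo classification of Theorem \ref{Theo 1}, and finish with the critical-group lemmas of subsection 3.2. Your reorganization (splitting first on abelian versus nonabelian Sylow $2$-subgroups via Proposition \ref{Prop 1}, and handling the trivial-centre subcase by exhibiting a vertex of degree three inside $E(G)\times F(G)$ rather than by the paper's uniform observation that every element of $E(G)$ has prime order when $Z(E(G))=1$) is legitimate and reaches the same intermediate conclusions.

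There is, however, one step that fails as written: the claim that ``the Eppo-ness of $G/F(G)$ cuts the image of $G$ in $\mathrm{Aut}(E(G))$ down to $\mathrm{Inn}(E(G))$.'' The group $M_{10}\cong PSL(2,9).2$ \emph{is} an Eppo group (it appears in case (c) of Theorem \ref{Theo 1}), so when $E(G)/Z(E(G))\cong PSL(2,9)$ the Eppo condition alone does not exclude $G/F(G)\cong M_{10}$, i.e.\ does not give $G=E(G)\cdot C_{G}(E(G))$. Your subsequent appeal to the lemma excluding $SL(2,9)$ is then circular, since that lemma as stated treats $G=SL(2,9)$, which is exactly what the gapped step was supposed to deliver. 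The paper closes this hole explicitly: since the Schur multiplier of $M_{10}$ has order $3$, there is no central extension of $M_{10}$ by $\mathbb{Z}_{2}$ whose derived subgroup is $SL(2,9)$, so $G/F(G)\cong M_{10}$ cannot occur. (Alternatively one can note that the $K_{5}$ produced in the paper's $SL(2,9)$ lemma already lives inside the characteristic subgroup $E(G)\cong SL(2,9)$ of $G$ and survives for any $A\leq\mathrm{Aut}(G)$, so the larger $G$ is excluded directly.) Apart from this, only cosmetic points remain: in the abelian-Sylow-$2$ branch your reason for $Z(E(G))=1$ covers only $p=2$ (for odd $p$ one needs that the Schur multiplier of $PSL(2,5)$ is $\mathbb{Z}_{2}$), and for $2^{2}PSL(3,4)$ one should add, as the paper does, that the relevant cover is the one admitting an automorphism transitive on the central involutions, since $Z(E(G))\setminus\{1\}$ must be a single $A$-orbit.
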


\begin{proof}
The generalized Fitting subgroup $F^{\ast }(G)$ is of the form $F^{\ast
}(G)=F(G)E(G)$ where $E(G)$ is the layer of $G$, that is the subgroup
generated by components, namely, subnormal quasisimple subgroups. As $%
C_{G}(F(G))\not\leq F(G)$ we see that $E(G)\neq 1$. Also note that $E(G)$
must be quasisimple since different components of $G$ centralize each other.
Clearly $Z(E(G))\subseteq z^{A}\cup \{1\}$ and hence 
\begin{equation*}
Z(E(G))=1\;\;\text{or}\;Z(E(G))\supseteq z^{A}\cup \{1\}\supseteq
\;C_{G}(E(G))\geq F(G).
\end{equation*}%
This observation shows that one of the following holds:

\begin{itemize}
\item $E(G)$ is a simple Eppo-group and $F^{\ast }(G)=\langle z^{A}\rangle
\times E(G)$,

\item $F^{\ast }(G)=E(G)$ is quasisimple with nontrivial center $Z(E(G))=$ $%
\langle z^{A}\rangle $ and $E(G)/Z(E(G))$ is a simple Eppo-group.
\end{itemize}

In any case $F^{\ast }(G)/Z(F^{\ast }(G))$ is isomorphic to one of the
following groups: 
\begin{equation*}
PSL(2,q)\;\;\text{for}\;\;q\in \{5,7,8,9,17\},\, PSL(3,4),\, Sz(8),\, Sz(32).
\end{equation*}%
In the case $Z(E(G))=1$ every element of the simple group $E(G)$ must be of
prime order, in particular $E(G)$ must have elementary abelian Sylow $2$%
-subgroups. The only simple Eppo-group satisfying these conditions is $%
PSL(2,5)$ whence we have $F^{\ast }(G)=\langle z^{A}\rangle \times PSL(2,5).$

Suppose now that $E(G)$ is quasisimple with nontrivial center and look at
the list of all possible groups for $E(G)/Z(E(G))$. The groups $PSL(2,8)$, $%
Sz(32)$ both have trivial Schur multipliers. Since the Schur multiplier of $%
PSL(2,17)$ is of order $2$ we would get $p=2$ in this case and this would
lead to a contradiction as $PSL(2,17)$ contains an element of order $9$.
Thus $F^{\ast }(G)/Z(F^{\ast }(G))$ is isomorphic to one of the following
groups: $PSL(2,5),\;PSL(2,7),\;PSL(2,9),\;PSL(3,4),\;Sz(8).$ Notice that the
Schur multiplier of $PSL(2,9)$ is of order $6$, but $p\neq 3$ because
otherwise a Sylow $2$-subgroup of $PSL(2,9)$ must be elementary abelian
which is not the case. Therefore in the first three cases we have $F^{\ast
}(G)$ is isomorphic to one of $SL(2,5),\;SL(2,7),\;SL(2,9).$ The Schur
multiplier of $PSL(3,4)$ is isomorphic to $\mathbb{Z}_{4}\times \mathbb{Z}%
_{12}$ so that $Z(F^{\ast }(G))$ is isomorphic to one of the groups $\mathbb{%
Z}_{2},\;\mathbb{Z}_{2}\times \mathbb{Z}_{2},\;\mathbb{Z}_{3}$. Again the
last one is not possible as a Sylow $2$-subgroup of $PSL(3,4)$ is not of
exponent $2$. Essentially there exist up to isomorphism one quasisimple
group $K_{1}$ such that $Z(K_{1})\cong \mathbb{Z}_{2}$ and $%
K_{1}/Z(K_{1})\cong PSL(3,4)$ and one quasimple group $K_{2}$ such that $\
Z(K_{2})\cong \mathbb{Z}_{2}\times \mathbb{Z}_{2}$ and $K_{2}/Z(K_{2})\cong
PSL(3,4)$ possessing an automorphism acting transitively on the set of the
involutions in the center. Similarly the Schur multiplier of $Sz(8)$ is
elementary abelian of order $4$ and the full covering group of $Sz(8)$ has
an automorphism of order $3$ which acts transitively on the set of
involutions of the center of this covering group. So there exist unique
quasisimple groups $L_{1}$ and $L_{2}$ with $Z(L_{1})\cong \mathbb{Z}_{2}$
and $Z(L_{2})\cong \mathbb{Z}_{2}\times \mathbb{Z}_{2}$ and $%
L_{i}/Z(L_{i})\cong Sz(8)$ for $i=1,2$.

In both cases $F(G)=\langle z^{A}\rangle $ and hence $G/F(G)$ is a
nonsolvable Eppo-group having a simple normal subgroup $F^{\ast }(G)/F(G).$
Appealing to the list of such Eppo-groups we obtain that $G=F^{\ast }(G)$
since the only nonsimple Eppo-group $M_{10}$ cannot occur as $G/F(G)$
because the covering group $2PSL(2,9)$ cannot be extended to $M_{10}$ whose
Schur multiplier is of order $3$. By the Lemmas of subsection
3.2 we can conclude that $G/Z(G)$ cannot be isomorphic to any of $%
PSL(2,7),\;PSL(2,9),\;Sz(8)$. As a result $G$ is isomorphic to one of 
\begin{equation*}
SL(2,5),\;2PSL(3,4),\;2^{2}PSL(3,4)
\end{equation*}%
establishing the claim.
\end{proof}

\begin{example}
Let $G=SL(2,5)$. Then $Aut(G)$ has $6$ orbits on $G\setminus \{1\}$ of lengths $%
1,20,20,30,24,24$, represented by elements of orders $2,3,6,4,5,10$
respectively. One can easily check that $\Gamma (G,Aut(G))$ is an $\mathcal{F%
}$-graph consisting of $2$ triangles and a tail $P_{2}$ joined at the
singular vertex corresponding to the central element. The unique vertex with
degree $1$ corresponds to the orbit consisting of the elements of order $4.$
\end{example}

\begin{example}
Let $G=Z\times H$ where $Z$ is an elementary abelian $p$-group for some
prime $p$ and $H=A_5\cong PSL(2,5).$ Take further $A\leq Aut(G)$ given as $%
A=U_{1}\times U_{2}$ where $\ U_{1}$ is cyclic of order $\left\vert
Z\right\vert -1$ and acts Frobeniusly on $Z$ fixing $H$ elementwise, and $%
U_{2}=S_5$ fixes $Z$ elementwise and acts as $Aut(H)$ on $H.$ Then 
$\Gamma =\Gamma (G,A)$ is a friendship graph with $3$ triangles.

Let $B=U_{1}\times U_{3}$ with $U_{3}=N_{U_{2}}(T)$ for $T\in
Syl_{2}({U_{2}}')$ where $U_{3}$ is considered as the subgroup of $S_{5}$ which stabilizes the point $5$
 in the natural permutation representation of $S_{5}$
on the set $\{1,2,3,4,5\}$. Then $U_3\cong S_4$ and the $U_3$-orbits on $H=A_5$ can be represented by
the elements%
\begin{equation*}
(1,2)(3,4),(1,2,3),(1,2,3,4,5),(1,2)(3,5),(1,2,5)
\end{equation*}
are respectively of lengths $3,8,24,12,12.$ So $\Gamma (H,U_{3})=\Gamma (A_{5},S_{4})$ has $5$ vertices and no edges and
that $\Gamma (G,B)$ is a friendship graph with $5$ triangles.
\end{example}

\subsection{\textbf{THE CASE WHERE $O_{p}(G)=1$}}

\begin{propositionn}
\label{Propos 2} Suppose that $O_{p}(G)=1$. Then $p=2$ and $G=F(G)S$ where $
S\in Syl_{2}(G)$ and $F(G)$ is  elementary abelian.
Moreover one of the following holds where $H=GA$ and $\overline{H}%
=H/C_{H}(F(G)):$

\begin{enumerate}
\item[(i)] $S\cong D_{8}$, $\left\vert \overline{H}/\overline{G}\right\vert
=q-1 $ and $\left\vert F(G)\right\vert =q^{2}$ where $q\in \{3,7\}$.

\item[(ii)] $S\cong Q_{8}\ast D_{8}$, the extraspecial group of order $32$
with $20$ elements of order $4$, $\left\vert F(G)\right\vert =3^{4}$, and $%
\overline{H}/\overline{G}$ is a Frobenius group of order $10$ or $20.$
\end{enumerate}
\end{propositionn}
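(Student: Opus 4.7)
The plan is to reduce the structure of $G$ in stages --- Fitting subgroup, then the prime $p$, then the Sylow $2$-structure --- and finally identify the possibilities for $(S, F(G))$. The first two stages follow fairly mechanically from Propositions~\ref{Propo 2} and~\ref{Prop 1}, while the third is the main technical obstacle.

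First I would pin down $F(G)$ as an elementary abelian Sylow $q$-subgroup for some prime $q \neq p$, with $C_G(F(G)) = F(G)$. Since $O_p(G) = 1$, the nilpotent Fitting subgroup is a $p'$-group. Proposition~\ref{Propo 2}(c) forbids elements of order $q_1 q_2$ with $q_1, q_2$ distinct primes different from $p$ in $F(G)$ (such an element would yield a triangle in $\Gamma$ containing no $p$-element), so $F(G) = Q$ is a $q$-group. Proposition~\ref{Propo 2}(g) forces $Q$ elementary abelian and $Q \setminus \{1\} = y^A$ a single $A$-orbit. For $C_G(Q) = Q$: Proposition~\ref{Propo 2}(c) applied to a nontrivial $q$-element of $Z(Q)$ shows $C_G(Q)$ is a $\{p,q\}$-group, and its $p$-part, being characteristic in the normal subgroup $C_G(Q)$, is normal in $G$ and therefore trivial. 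Consequently $Q$ is the unique Sylow $q$-subgroup of $G$ and $\overline G := G/Q$ embeds faithfully into $GL(Q)$.

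Next I would establish $p = 2$ and $\pi(G) = \{p, q\}$. Proposition~\ref{Propo 2}(k) combined with $O_p(G) = 1$ gives $z \notin Z(P)$ for any Sylow $p$-subgroup $P$, so $P$ is nonabelian. If $p$ were odd, Sylow $2$-subgroups would be elementary abelian by Proposition~\ref{Propo 2}(g), placing us in the branch of Proposition~\ref{Prop 1} where $G$ is solvable or admits a unique nonabelian $A$-chief factor isomorphic to $PSL(2,5)$. In either branch, a careful degree analysis at $y^A$ using that non-$z^A$ $p$-elements act fixed-point-freely on $Q$ (by Proposition~\ref{Propo 2}(c)), together with the identity $C_G(Q) = Q$ and the Lemmas of Subsection~3.2 that forbid the critical composition factors, yields a contradiction. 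To reduce to $\pi(G) = \{2, q\}$, any putative third prime $r$ would produce an $r$-element $u$ acting fixed-point-freely on $Q$; then coprime action and commutator identities inside $\langle u, y \rangle$ give an extra vertex adjacent to $y^A$, violating $\deg(y^A) \leq 2$. Hence $p = 2$, $\pi(G) = \{2, q\}$, and $G = QS$ with $S \in Syl_2(G)$ nonabelian.

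It remains to identify $S$ and $Q$. Any $v \in Z(S) \setminus \{1\}$ commutes with $z$, forcing $\{z^A, v^A, (vz)^A\}$ to form a triangle at $z^A$; if $Z(S)$ were to contain two $A$-orbits, or a single $A$-orbit yielding pairwise distinct such triangles, one of the vertices $v^A$ would acquire degree $\geq 3$, contradicting uniqueness of the singular vertex. This pins down $|Z(S)| = 2$, and Proposition~\ref{Propo 2}(f) bounds $\exp(S) \le 8$, restricting $S$ to $D_8$ or an extraspecial $2$-group of order $32$. The module $Q$ is then a faithful irreducible $GF(q)[S]$-module, and $\overline H / \overline G$ must transitively permute $Q \setminus \{1\}$, so $|Q| - 1$ divides $|\overline H / \overline G|$. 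For $S \cong D_8$, the faithful irreducibles over $GF(q)$ are $2$-dimensional, so $|Q| = q^2$; matching transitivity with the normaliser of the image of $D_8$ in $GL_2(q)$ forces $|\overline H / \overline G| = q - 1$ and pins down $q \in \{3, 7\}$. For the extraspecial case, the unique faithful irreducible over $GF(3)$ of $Q_8 \ast D_8$ (of $+$-type) is $4$-dimensional, giving $|Q| = 3^4$, and the subgroups of $GL_4(3)$ normalising the image of $S$ and transitively permuting $Q \setminus \{1\}$ are exactly the Frobenius groups of orders $10$ and $20$; the $-$-type extraspecial group $Q_8 \ast Q_8$ would produce the wrong count of involutions versus order-$4$ elements to fit the $\mathcal{F}$-graph hypothesis.

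The main obstacle is this last stage: the explicit module-theoretic identification. The first two stages are direct consequences of the key observations already assembled, but the final classification requires enumerating the faithful irreducibles of small extraspecial $2$-groups over small odd-characteristic fields and matching them carefully against the orbit structure dictated by the $\mathcal{F}$-graph condition, with the Lemmas of Subsection~3.2 invoked to excise the intermediate possibilities.
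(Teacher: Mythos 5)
Your first stage (that $F(G)$, \emph{if nontrivial}, is an elementary abelian self-centralizing Sylow $q$-subgroup whose nonidentity elements form one $A$-orbit) matches the paper, but you never prove $F(G)\neq 1$. Since the only hypothesis is $O_p(G)=1$, you must rule out the possibility that a minimal $A$-invariant normal subgroup $M$ has $C_G(M)=1$; the paper does this by showing such an $M$ would be a simple Eppo-group on which $z$ induces an \emph{outer} automorphism, so $p$ divides $|Out(M)|$, and then running through the list of Theorem 3.1 to reach a contradiction via Proposition 3.2(g). This is a substantial piece of the argument, not a formality, and it is absent from your proposal. Your reduction to $\pi(G)=\{p,q\}$ and $p=2$ is likewise only gestured at (``a careful degree analysis \ldots yields a contradiction''); in the paper $p=2$ is in fact only extracted inside the final case analysis, and the $\{p,q\}$-group statement comes from showing $Z(S)$ acts fixed-point-freely on $O_{p'}(G)$ and then a chief-factor/Frattini argument, none of which your sketch supplies.

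The decisive gap is in the last stage. From $|Z(S)|=2$ and $\exp(S)\mid 8$ you conclude that $S$ is $D_8$ or extraspecial of order $32$; this is a non sequitur. Those two conditions exclude neither $D_{16}$ and $SD_{16}$ nor extraspecial groups of order $2^{7},2^{9},\dots$ (all of which have non-central involutions available for $z$), nor many other $2$-groups. The paper needs two further inputs that you never invoke: (i) a structural analysis (Step 2 of the paper's proof) showing that either $S$ is extraspecial or $Z_2(S)=S'=\Phi(S)$ is extraspecial of odd exponent, and, crucially, (ii) the Huppert--Hering classification of transitive linear groups, applied to $\overline{H}$ acting transitively on $F(G)\setminus\{1\}$ (note it is $\overline{H}$, not $\overline{H}/\overline{G}$, that is transitive). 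It is this classification that bounds the size of $S$, forces $p=2$ in each surviving branch, and produces the arithmetic ($q+1\mid 16$, hence $q\in\{3,7\}$; the Frobenius group of order $10$ or $20$ in the extraspecial normalizer case). Replacing it by ``enumerate the faithful irreducible $GF(q)[S]$-modules'' does not work: $F(G)$ need not be irreducible under $S$ alone, irreducibility says nothing about transitivity on nonzero vectors, and without Huppert--Hering you have no a priori bound on $|S|$ or on $\overline{H}/\overline{G}$. As it stands the proposal cannot be completed along the route described.
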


\begin{proof}
For the sake of easy understanding we shall divide the rest of the proof
into smaller steps.\newline

\textit{Step 1. }$G$\textit{\ is a }$\{p,q\}$\textit{-group where }$F(G)$%
\textit{\ is a Sylow }$q$\textit{-subgroup of }$G$\textit{\ which is
elementary abelian, self-centralizing and }$|Z(G/F(G))|=p$. \textit{%
Furthermore nontrivial elements of} $F(G)$ \textit{constitute an} $A$-%
\textit{orbit}.\newline

\textit{Proof.} Let $M$ be an $A$-invariant minimal normal subgroup of $G.$
As $O_{p}(G)=1$ we see that $C_{G}(M)=1$ or $C_{G}(M)=\dot{M}.$ In the first case $M$ is nonsolvable. Then $M$ is nonabelian simple and hence $%
z^{A}\cap M=\emptyset $ by Proposition \ref{Propo 3} $(a)$-$(b)$. Now
Proposition \ref{Propo 2} $(d)$ implies that $M$ is a simple Eppo-group and $%
G$ is isomorphic to a subgroup of $Aut(M)$ containing $M$. Clearly $z$
induces by conjugation an automorphism $\zeta $ on $M$. Note that $\zeta $
is an outer automorphism because otherwise there would exist $x\in M$ such
that $xz\in C_{G}(M)=1$. Hence $p$ divides $\left\vert Out(M)\right\vert.$ 
If $p$ is odd then a Sylow $2$-subgroup of $M$ is abelian by Proposition \ref%
{Propo 2} $(g)$. Now appealing to Propositio \ref{Prop 1} we observe that $M$ is
isomorphic to one of $PSL(2,5).$ But $\left\vert Out(PSL(2,5)\right\vert =2.$ This shows that $p=2.$

Then all Sylow $q$-subgroups of $M$ for odd primes $q$ are of exponent $q$
by Proposition \ref{Propo 2} $(g)$. We observe by Theorem \ref{Theo 1} that $%
M$ is isomorphic to one of the following groups: 
\begin{equation*}
PSL(2,5),PSL(2,7),PSL(2,9),PSL(3,4)
\end{equation*}%
because $PSL(2,8)$ and $PSL(2,17)$ do not satisfy the exponent condition,
and both $|Out(Sz(8))|$ and $|Out(Sz(32))|$ are odd. Notice that in all
these remaining cases there exists a prime $q\neq p$ such that $q$ does not
divide $\left\vert C_{M}(\zeta )\right\vert $ which is impossible by $(g)$
of Proposition \ref{Propo 2}. Therefore $M$ must be an elementary abelian $q$
-group for some prime $q\neq p.$ We have $M\in Syl_{q}(G)$ and $M\setminus
\{1\}$ is an $A$-orbit by $(g)$ and $(h)$ of Proposition \ref{Propo 2}.
Clearly it also holds that $C_{G}(M)=M$.

We know by part $(k)$ of Proposition \ref{Propo 2} that $z^{A}\cap
Z(S)=\emptyset $ for any Sylow $p$-subgroup $S$ of $G.$ Let $S\in Syl_{p}(G)$%
. If $1\neq u\in Z(S)$ then $C_{G}(u)=S$ as $u\notin z^{A},$ in particular $%
u $ acts fixed point freely on $O_{p^{\prime }}(G)$ for any $1\neq u\in Z(S)$
$.$ This shows first that $O_{p^{\prime }}(G)$ is nilpotent and hence $%
O_{p^{\prime }}(G)=M$ and second that $Z(P)$ is cyclic$.$ Suppose that there
exists an element $x$ in $Z(S)$ of order $p^{2}$. Then for any $y\in
S\setminus Z(S)$ we see that $y^{A}=z^{A}$ as $\Gamma \lbrack
x^{A},(x^{p})^{A},y^{A}]=C_{3}.$ Therefore every element in $S\setminus Z(S)$ is
of order $p.$ But this is not possible since $xy\in S\setminus Z(S)$ and is
of order $p^{2}.$ Thus we get $\left\vert Z(S)\right\vert =p.$

Let $L/M$ be an $A$-invariant minimal normal subgroup of $G/M.$ Suppose that 
$L/M$ is nonsolvable. Then by parts $(a)$ and $(c)$ of Proposition \ref%
{Propo 3} $L/M$ is nonabelian simple and $z^{A}\cap L\subset M$ and hence $%
z^{A}\cap L=\emptyset .$ It follows by $(b)$ of Proposition \ref{Propo 3}
that $L$ is a nonsolvable Eppo-group which has a nontrivial normal subgroup $%
M$. Appealing to Theorem \ref{Theo 1} we find $M=O_{2}(L)$. Thus $q=2\neq p$ which
yields that a Sylow $2$-subgroup of $L$ is elementary abelian. Then we see
that the kernel of $L/M$ on $O_{2}(L)=M=F(G)$ contains a Sylow $2$-subgroup
and hence is nontrivial. This contradiction shows that $L/M$ is solvable. As 
$M=O_{p^{\prime }}(G)$ we see that $L/M$ is an elementary abelian $p$-group.
Then $U=S\cap L$\ is a Sylow $p$-subgroup of \thinspace $L$ and $G=MN_{G}(U)$%
. If possible choose $r\in \pi (N_{G}(U))\setminus \{p\}$ and let $R$ be a
subgroup of $N_{G}(U))$ of order $r$. Since $Z(S)\leq U$ we see by Step 2
that $C_{G}(U)$ is a $p$-group. More precisely $Z(S)=[Z(S),R]\leq $ $[U,R]$
is nontrivial and $R$ acts Frobeniusly on $[Z(U),R]M$ as $r\neq q$ because $%
M $ is a Sylow $q$-subgroup of $G.$ This contradiction shows that $%
N_{G}(U)=S $ is a $p$-group and $G=MS$ and $L=MZ(S).$\\

\textit{Step 2. For any\thinspace } $S\in Syl_{p}(G)$\textit{\ we have }$%
z\in X=\{x\in S\setminus Z(S):x^{p}=1\}$\textit{\ is contained in an }$A$%
\textit{-orbit while }$S\setminus (Z(S)\cup X)$ \textit{is contained in
another }$A$\textit{-orbit, and one of the following holds :}

$(a)$ $S$\textit{\ is extraspecial,}

$(b)$ $Z_{2}(S)=S^{\prime }=\Phi (S)$\textit{\ is an extraspecial group of
exponent }$p$, $p$\textit{\ is odd.}\\

\textit{Proof.} Let $S\in Syl_{p}(G)$. By Step 1 we know that $U=$ $Z(S)$ is
of order $p$ and $U\neq S.$ The number $k$ of $A$-orbits the union of which
covers $S\setminus Z(S)$ is at most $2$ because otherwise there exist $x_{i}$
in $S\setminus Z(S)$ such that $x_{i}^{A},i=1,2,3$ and $u^{A}$ for any $%
1\neq u\in U$ are pairwise distinct. As $u^{A}$ is adjacent to $%
x_{i}^{A},i=1,2,3$ we see that $u^{A}=z^{A}$ which is not possible. Suppose
that $k=2.$ There exists $x\in S\setminus U$ such that $x\notin z^{A}$.
Assume that $x$ is also of order $p$. As $U$ acts fixed point freely on $M$
we see that 
\begin{equation*}
M=\left\langle C_{M}(y):y\in \left\langle x\right\rangle U\setminus
\{1\}\right\rangle =\left\langle C_{M}(t):t\in Ux\right\rangle .
\end{equation*}%
If $x\in Z_{2}(S)$ then $\left\langle x\right\rangle U\trianglelefteq S$ and
hence $x^{S}=xU$. Thus we obtain that $C_{M}(x)\neq 1$ and hence $x\in
z^{A}. $ If $x\notin Z_{2}(S)$ then as $k=2$ we see that $S\setminus
Z_{2}(S)\subset x^{A}$ and $Z_{2}(S)\setminus U\subset z^{A}.$ But then
there exists $u\in U$ such that $t=xu$ centralizes some nontrivial element
in $M$ implying the contradiction that $t\in (S\setminus Z_{2}(S))\cap
z^{A}. $ This shows that in case $k=2$ any element in $%
(S\setminus U)\setminus z^{A}$ is of order $p^{2}$ and hence any two
elements of $S\setminus Z(S)$ of the same order are lying in the same $A$%
-orbit. If $k=1 $ then $S\setminus Z(S)\subseteq z^{A}.$ In any case we have $%
\{x\in S\setminus Z(S):x^{p}=1\}$ $\subseteq z^{A}\ $and $S\setminus
(Z(S)\cup X)$ is contained in another $A$-orbit which consists of elements of
order $p^{2}$ if it is nonempty.

Suppose that there exists an element $y\in S$ of order $p^{2}$ such that $%
y^{p}\notin U=\left\langle u\right\rangle $. Then $y^{p}=v\in z^{A}.$ The
abelian group $\left\langle u,y\right\rangle $ acts on the nontrivial group $%
C_{M}(v).$ If $x$ $\in $ $\left\langle u,y\right\rangle \setminus
\left\langle v\right\rangle $ has some nontrivial fixed point in $C_{M}(v)$
then it must be $A$-conjugate to $z$ and hence is contained in $\Omega
_{1}(\left\langle u,y\right\rangle )\leq $ $\left\langle u,v\right\rangle $.
But clearly any element of $\left\langle u,v\right\rangle \setminus
\left\langle v\right\rangle $ acts fixed poiny freely on $C_{M}(v).$ It follows that $C_{M}(v)\left\langle u,y\right\rangle /\left\langle
v\right\rangle $ is a Frobenius group which is not possible as $\left\langle
u,y\right\rangle/\left\langle v\right\rangle $ is noncyclic of order $p^{2}$%
. This contradiction shows that $y^{p}\in \left\langle u\right\rangle $,
that is $\mho ^{1}(S)=U.$ So independent of the exponent of $S$ we have $%
\mho ^{1}(S)\leq U.$

If $S^{\prime }=U$ \ then we have $\Phi (S)=S^{\prime }\mho ^{1}(S)=U=Z(S)$
and hence $S$ is extraspecial and clearly $S=Z_{2}(S)$.

If $U<S^{\prime }$ then $S^{\prime }\setminus U$ and $S\setminus S^{\prime }$
are lying in different $A$-orbits. So there are no $A$-invariant subgroups $%
Y $ such that $MU<Y<MS=G$ except $MS^{\prime }.$ In particular we have $%
Z_{2}(S)=S^{\prime }$ or $Z_{2}(S)=S.$ As $S/U$ is of exponent $p$ and $%
Z_{2}(S)^{\prime }\leq U<S^{\prime }$ we see that $Z_{2}(S)=S^{\prime }=\Phi
(S)$. Similarly, as $MC_{S}(S^{\prime })$ is $A$-invariant and contains $MU$
we see that either $C_{S}(S^{\prime })\leq S^{\prime }$ or $S^{\prime
}<C_{S}(S^{\prime })S^{\prime }$ and hence $C_{S}(S^{\prime })S^{\prime }=S.$
The second implies $C_{S}(S^{\prime })=S$ and hence the contradiction $%
S^{\prime }\leq Z(S)=U.$ Thus we have $C_{S}(S^{\prime })\leq S^{\prime }.$
Suppose next that $z^{A}\cap S^{\prime }=\emptyset $. Then $MS^{\prime }$ is
a Frobenius group and hence $S^{\prime }$ is isomorphic to either $\mathbb{Z}%
_{p^{2}}$ or $Q_{8}$. So the unique subgroup of order $p$ in $%
S^{\prime }$ must be $A$-conjugate to $\left\langle z\right\rangle $ which
is not possible. This shows that $S^{\prime }\setminus U\subset z^{A}$ \ and
hence $S\setminus S^{\prime }$ is the set of elements of order $p^{2}$ in $S$%
.

Furthermore since $\exp (S^{\prime })=p$ we have that $S^{\prime }$ is
either extraspecial of exponent $p$ or is elementary abelian. Now $M$ is an
irreducible $GA$-module and $MS^{\prime }$ is a normal subgroup of $GA.$ Let 
$M=W_{1}\oplus \cdots \oplus W_{m}$ be the Wedderburn decomposition of $M$
with respect to $MS^{\prime }.$ As $A$ acts on the set $\{W_{1},\ldots
,W_{m}\}$ of $MS^{\prime }$-Wedderburn components, it leaves $W_{1}\cup \cdots \cup
W_{m}$ invariant. Since $M\setminus \{1\}$ is an $A$-orbit by Step 1 we must
have $W_{1}\cup \cdots \cup W_{m}=M$ and this is possible only if $m=1.$
Thus $M$ is a homogeneous $MS^{\prime }$-module and hence a homogeneous $%
MS^{\prime }/M$-module. If $S^{\prime }$ is abelian we obtain that $%
S^{\prime }/C_{S^{\prime }}(M)$ is cyclic of order $p\ $which is impossible
as $C_{S^{\prime }}(M)=1$ and $U<S^{\prime }$. So $S^{\prime }$ is
extraspecial of exponent $p$ which yields in particular that $p$ is odd.\\

\textit{Step 3. The end of the proof.}\medskip\ \ \ \ \ \ \ 

\textit{Proof.} Note that $S\cong \overline{G}\trianglelefteq $ $\overline{H}
$. Then $\overline{H}$ is isomorphic to a subgroup of $GL(m,q)=Aut(M)$,
which acts transitively on the set of all nonidentity elements of $M$ and
has a normal subgroup $\overline{G}$ which is isomorphic to $S$. Appealing
to \cite{Huppert} if $A$ is assumed to be solvable, or to \cite{He} which
uses CFSG we conclude that one of the following holds:

\begin{enumerate}
\item $SL(k,q^{n})\leq\overline{H}\leq\Gamma L(k,q^{n})$,

\item $Sp(k,q^{n})\trianglelefteq\overline{H}$, where the parameters $n$ and 
$k$ are related to the dimension $m$ by $m=nk$,

\item There exists a normal subgroup $\overline{N}$ in $\overline{H}$ which
is an extraspecial $2$-group of order $2^{m+1}$ such that $C_{\overline{H}}(%
\overline{N})=Z(\overline{H})$ and $\overline{H}/\overline{N}Z(\overline{H})$
is faithfully represented on $\overline{N}/Z(\overline{N})$ and

\begin{itemize}
\item either $m=2$,\; $q\in\{3,5,7,11,23\}$ and $\overline{H}/\overline{N}Z(%
\overline{H}) $ is isomorphic to a subgroup of $S_{3}$,

\item or $m=4$,\; $q=3$ and $\mathbb{Z}_{5}\leq\overline{H}/\overline{N}$ is
isomorphic to a subgroup of a Frobenius group of order $20$ acting
faithfully on $\overline{N}/Z(\overline{N})$.
\end{itemize}
\end{enumerate}

Assume that (1) or (2) holds. Then the fact that $\overline{H}$ has a normal
subgroup $\overline{G}$ isomorphic to $S$ bounds the parameters: $k\leq2$; and
$q^{n}\in\{2,3\}$ if $k=2$ , because otherwise $\overline{H}$ does not contain
a nonabelian normal $p$-subgroup. Suppose first that $k=1$. Then $\overline
{H}$ is isomorphic to a subgroup of $\Gamma L(1,q^{n})$. On the other
hand$\ \Gamma L(1,q^{n})=KL$ where $K$ is a normal subgroup which is
isomorphic to the multiplicative group of the field $GF(q^{n})$ and $L\cong
Aut(GF(q^{n})$, $K\trianglelefteq\Gamma L(1,q^{n})$ and $K\cap L=1.$ Note that
$K$ is cyclic of order $q^{n}-1$ and $L$ is cyclic of order $n.$ We deduce
that $\overline{G}$ has a cyclic normal subgroup which we denote by abuse of
language by $\overline{G}\cap K$ so that $\overline{G}/(\overline{G}\cap K)$
is also a cyclic group. Since on the other hand either $S$ is extraspecial or
$S^{\prime}$ is extraspecial we see that $S\cong\overline{G}$ must be an extraspecial group of
order $p^{3}$ containing a cyclic subgroup of order $p^{2}$ and also a
subgroup of order p different from $Z(\overline{G}).$ In any case there exists
an element $x\in\overline{G}\setminus (\overline{G}\cap K).$ If we  consider $x$ as an
element of $\Gamma L(1,q^{n})$ it can be given as  $x=h\gamma$ where
$h\in\overline{H}\cap K$ and $\left\langle \gamma\right\rangle $ is the unique
subgroup in $L$ of order $p.$ In particular, $p$ divides $n\ $\ and $\gamma$
is the automorphism of the field $GF(q^{n})$ given by $y^{\gamma}=y^{q^{r}}$
where $n=pr.$ Using this description we want to compute $C_{\overline{H}%
}(\overline{G}):$ We have
\[
\;\;C_{\overline{H}\cap K}(\overline{G})=C_{\overline{H}\cap K}(x)\leq
C_{K}(\gamma)\cong GF(q^{r})^{\times}%
\]
and hence $\left\vert C\overline{_{H}}(\overline{G})\right\vert $ divides
$\left\vert L\right\vert \left\vert C_{K}(\gamma)\right\vert =pr$ $(q^{r}-1).$
Clearly $Z(\overline{G})$ is of order $p$ and is contained in $(\overline
{G}\cap K)\cap C_{\overline{H}\cap K}(x)$ which gives the additional
information that $p$ divides $q^{r}-1.$ $\overline{H}/C_{\overline{H}}(\overline{G})$ is isomorphic to a subgroup of
$Aut(\overline{G})$ containing
\[
Inn(\overline{G})=\overline{G}C_{\overline{H}}(\overline{G})/C_{\overline{H}%
}(\overline{G})\cong\overline{G}/Z(\overline{G})\cong%
\mathbb{Z}
_{p}\times%
\mathbb{Z}
_{p}.
\]
We know by \cite{W}  that $Aut(\overline{G})/Inn(\overline{G})$ is isomorphic
to $%
\mathbb{Z}
_{2}$ if $\ \overline{G}\cong D_{8}$ and to the Frobenius group of order
$p(p-1)$ if $p$ is odd. But taking the metacyclic structure of $\Gamma
L(1,q^{n})$ into account we see that the group of automorphisms induced by
$\overline{H}$ on $\overline{G}$ modulo inner automorphisms must be abelian
and must leave $\overline{G}\cap K$ invariant and hence $\left\vert
\overline{H}\right\vert $ divides $2^{4}r(q^{r}-1)$ if $p=2$, and divides
$p^{3}(p-1)r(q^{r}-1)$  if $p$ is odd as $%
\mathbb{Z}
_{p-1}\cong\overline{H}/C_{\overline{H}}(Z(\overline{G}))$ acts transitively
on $Z(\overline{G})\setminus \{1\}.$ As $M\setminus \{1\}$ is an $\overline{H}$-orbit we see that
$q^{pr}-1$ divides the order of $\overline{H}$. So we get that
\[
q^{2r}-1\text{ divides }2^{4}r(q^{r}-1),\text{ or }p\text{ is odd and }%
q^{rp}-1\text{ divides \ }p^{3}(p-1)r(q^{r}-1)
\]
where $p$ is a prime dividing $q^{r}-1.$ In particular
\[
q^{rp}-1\leq p^{4}r(q^{r}-1)\leq(q^{r}-1)^{5}r<q^{r5}r<q^{rp}%
\]
if $p>5$ which is clearly not possible. So $p\in\{2,3,5\}.$

Let us assume that $p\geq3$ and hence $n=pr\geq3.$ Then by Theorem
3.5 and Theorem 3.9 in  \cite{He} we see that if $(n,q)\neq(6,2)$ there exists a prime
divisor $s$ of  $q^{n}-1$ which does not divide $q^{k}-1$ for any $0<k<n.$
Thus $s$ divides $(p-1)r.$ If $s$ divides $p-1$ then $s=2$ and $q$ is odd and
$s$ divides $q-1$ which is not possible. So $s$ divides $r$ and hence we have
$q^{s}\equiv q(\operatorname{mod}s)$ and hence $q^{rp}\equiv q^{\frac{r}{s}%
	p}\equiv1(\operatorname{mod}s)$ which contradicts the definition of $s.$ So we are
left with the case $(n,q)\neq(6,2).$ In this case $q^{n}-1=63$, $p=3$ and
$r=2$ giving that $p^{3}(p-1)r(q^{r}-1)=3^{3}2^{2}(2^{2}-1)$ which shows that
$q^{n}-1$ does not divide $p^{3}(p-1)r(q^{r}-1).$  So we have $p=2$ and we get
 $q^{r}+1$ divides $2^{4}r$. This yields that $r=1$ and $q\in\{3,7\}.$
Then $\overline{H}=\Gamma L(1,q^{2})$ and $[\overline{H}:\overline{G}]=q-1$.

Suppose next that $k=2$. Then either $q^{n}=3$ or $q^{n}=2$ and $m=k$. Note that in the second case $\overline{H}\leq S_{3}$ which is not possible as
$\overline{G}$ is nonabelian. In the first case $SL(2,3)=Sp(2,3)\leq
\overline{H}\leq\Gamma L(2,3)$ and $\overline{G}\leq O_{2}(\overline{H})$. The group 
$\overline{G}$ is an extraspecial group of order $8$ containing the involution
$\overline{z}$ outside its center and hence $\overline{G}\cong D_{8}.$ This
forces $\overline{H}$ to be the semidihedral group of order $16$ containing a
subgroup isomorphic to $Q_{8}$ and acting regularly on $M.$ So $(i)$ holds in
this case.

Now we can assume that (3) holds. We obtain immediately that $p=2,$ because
otherwise $[\overline{G},\overline{N}]\leq \overline{G}\cap \overline{N}$ $=1
$ implying the contradiction that $\overline{G}\leq C_{\overline{H}}(%
\overline{N})=Z(\overline{H})$. Thus $\overline{G}\overline{N}$ $\leq O_{2}(%
\overline{H})$. If $\overline{H}/\overline{N}Z(\overline{H})$ is not a $2$%
-group then $O_{2}(\overline{H})\leq \overline{N}O_{2}(Z(\overline{H}))$. So
we have either $\overline{G}\leq \overline{N}O_{2}(Z(\overline{H}))$ or $%
\overline{H}=\overline{G}\overline{N}Z(\overline{H})$ and $|\overline{G}:%
\overline{G}\cap \overline{N}Z(\overline{H})|=2.$ The second case may occur
only if $m=2.$

We shall distinguish between the cases $m=2$ and $m=4.$ If $m=4$ then $%
q=3,\;Z(\overline{H})=Z(\overline{N})\cong \mathbb{Z}_{2}$, $\overline{N}%
=O_{2}(\overline{H})$ is extraspecial of order $2^{5}$ and $\left\vert
H/N\right\vert \in \{5,10,20\}$. Therefore $\overline{G}\leq \overline{N}$.
On the other hand an element of order $5$ in $\overline{H}$ acts irreducibly
on $\overline{N}/Z(\overline{N})$ and normalizes $\overline{G}$. This
implies that $\overline{G}=\overline{N}$ and also $S\cong \overline{G}$ is
an extraspecial group of order $32$. There are two isomorphism classes of
extraspecial groups of order $32$, which differ by the number of involutions
they contain. And only the one which is the central product of a dihedral
group and a quaternion group and contains exactly $20$ elements of order $4$
admits an automorphism of order $5$. This forces that $S\cong Q_{8}\ast D_{8}
$. As elements of the same order in $S\setminus Z(S)$ are $A$-conjugate we
see that the elements of $\overline{G}$ of order $4$ must be conjugate in $%
\overline{H}$. Since an element $x\in \overline{G}$ of order $4$ has exactly
two conjugates inside $\overline{G}$, namely $x$ and $x^{-1}$, we see that $|%
\overline{H}:\overline{G}|\in \{10,20\}$. Observe that all maximal abelian
subgroups of $S$ are isomorphic to $%
\mathbb{Z}
_{4}\times 
\mathbb{Z}
_{2}$ and hence for any $1\neq w\in M$ we have $\left\vert
C_{S}(w)\right\vert =2$ so that the subgroup $\overline{G}\overline{L}$ is
transitive on \thinspace $M\setminus \{1\}$ where $\overline{L}\in Syl_{5}(%
\overline{H}).$

If $m=2$ then by \cite{Huppert}  $\overline{N}\cong Q_{8}$ and it
acts irreducibly on $M$ implying that $Z(\overline{H})$ acts by scalars on $%
M.$ Hence $Z(\overline{H})$ is cyclic of order dividing $q-1$ and $\overline{H}/\overline {%
N}Z(\overline{H})$ is isomorphic to a subgroup of $S_{3}.$ As $Z(\overline {N%
})\leq Z(\overline{H})$ and a Sylow $2$-subgroup of $Z(\overline{H})$ is of
order $2$ for $q\in\{3,7,11,23\}$ it holds that $Z(\overline{H})=Z(\overline {N%
})\times O_{2^{\prime}}(Z(\overline{H}))$. As $Q_{8}\ncong\overline{G}$ we
have $\overline{G}\nleqq\overline{N}$ and therefore $\overline{G}%
\overline{N}=O_{2}(\overline{H})$ implying $\overline{H}=\overline{G}%
\overline{N}O_{2^{\prime}}(Z(\overline{H}))$. This gives that $q=3$, and $%
\overline{G}\equiv D_{8}$ and $\overline{H}=\overline{G}\overline{N}.$ So we are left with the case $q=5$.

Notice that $Z(\overline{H})$ is of order dividing $4$ in this case. As $24=\left\vert
M\right\vert -1$ divides $\left\vert \overline{H}\right\vert $ we see that $
O_{2}(\overline{H})=\overline{N}Z(\overline{H})$ and $\overline{G}\leq%
\overline{N}Z(\overline{H})$. Since $\overline{G}$ is extraspecial we get\ $%
\overline{G}\cong D_{8}$ and hence $\overline{N}Z(\overline{H})\cong
Q_{8}\ast%
\mathbb{Z}
_{4}.$ Then an element of order $3$ in $\overline{H}$ must normalize $%
\overline{G}$ and hence must act trivially on it and on   $\overline{N}Z(\overline{H})$ as well. This contradiction
completes the proof.
\end{proof}

\begin{example}
We shall now present a slightly modified version of an example given in \cite%
{Huppert}. Consider the subgroup of $GL(4,3)$ generated by the matrices%
\newline

\ $\alpha =%
\begin{bmatrix}
N_{1} & O \\ 
O & N_{1}%
\end{bmatrix}%
$\,\,\, $\beta =%
\begin{bmatrix}
N_{2} & O \\ 
O & N_{2}%
\end{bmatrix}%
$\,\,\, $\gamma =%
\begin{bmatrix}
I_{2} & I_{2} \\ 
I_{2} & -I_{2}%
\end{bmatrix}%
$\,\,\, $\delta =%
\begin{bmatrix}
O & -I_{2} \\ 
I_{2} & O%
\end{bmatrix}%
$\ \ where\\

$O=%
\begin{bmatrix}
0 & 0 \\ 
0 & 0%
\end{bmatrix}%
$\,\,\, $I_{2}=%
\begin{bmatrix}
1 & 0 \\ 
0 & 1%
\end{bmatrix}%
$\,\,\, $N_{1}=%
\begin{bmatrix}
0 & 1 \\ 
-1 & 0%
\end{bmatrix}%
$\,\,\, $N_{2}=%
\begin{bmatrix}
1 & 0 \\ 
0 & -1%
\end{bmatrix}%
$\,\,\,

 $f=%
\begin{bmatrix}
1 & 1 & -1 & -1 \\ 
0 & 0 & -1 & 1 \\ 
0 & 0 & -1 & -1 \\ 
-1 & 1 & 1 & -1%
\end{bmatrix}%
$\,\,\, $g=$ $%
\begin{bmatrix}
0 & 1 & 0 & -1 \\ 
0 & 0 & 1 & 0 \\ 
0 & 1 & 0 & 1 \\ 
1 & 0 & 0 & 0%
\end{bmatrix}%
.$

Then we have the following relations:\newline
$\alpha^{2}=\gamma^{2}=\delta^{2}=-I_{4}=-\beta^{2},\;
\alpha^{\beta}=\alpha^{-1},\; \gamma^{\delta}=\gamma^{-1},\;$ $[\left\langle
\alpha,\beta\right\rangle ,\left\langle \gamma,\delta \right\rangle ]=I_{4},$
that is $\left\langle \alpha,\beta\right\rangle \cong D_{8},\; \left\langle
\gamma,\delta\right\rangle \cong Q_{8},$\; $\left\langle
\alpha,\beta,\gamma,\delta\right\rangle \cong D_{8}\ast Q_{8}$, the
extraspecial group of order $2^{5}$ with $20$ elements of order $4$.
Furthermore we have $f^{5}=-g^{4}=I_{4},$\; $f^{g}=f^{2},$\; $%
\alpha^{f}=\beta\gamma,$ $\beta^{f}=-\alpha\beta,$\; $\gamma^{f}=\alpha\beta%
\delta,$\; $\delta^{f}=-\gamma,$\; $\alpha^{g}=\alpha\beta\gamma,$\; $%
\beta^{g}=-\beta,$\; $\gamma ^{g}=-\beta\gamma\delta,$ $\delta^{g}=-\beta%
\delta.$

Set $B=\left\langle f,g\right\rangle $. Then $B/Z(B)$ is the Frobenius group
of order $20$ with $Z(B)=\left\langle -I_{4}\right\rangle $ and $B$ acts
nontrivially on $S=\left\langle \alpha,\beta,\gamma,\delta\right\rangle $ so
that the kernel of the action is $Z(B)=Z(S).$ The subgroup $SB$ of $GL(4,3)$
acts in a natural way on the vector space of column matrices over $GF(3)$
which we denote by $M$ and consider it as an elementary abelian group of
order $3^{4}.$ Let $G$ denote the normal subgroup $MS$ of the semidirect
product $A=MSB$ and consider $A$ as a group of automorphisms of $G.$ Observe
that if $u=-I_{4}\in Z(S)$ and $v=[1,0,0,0]^{T}\in C_{M}(\beta),$ then the $%
A $-orbits in $G\setminus \{1\}$ are represented by $v,u,\beta,v\beta,\delta$
which are of lengths $3^{4}-1, 3^{4}, 3^{2}\cdot10, 3^{2}(3^{2}-1)\cdot10,
3^{4}\cdot20$ respectively. Now if $\Gamma=\Gamma(G,A)$ we see that $\Gamma[
\beta^{A}, v^{A}, (v\beta)^{A}]=C_{3} =\Gamma[\beta^{A}, u^{A}, \delta^{A}]$
and $\Gamma$ is a friendship graph with two triangles joined at the singular
vertex $\beta^{A}.$
\end{example}

\begin{example}
Let $M$ be an elementary abelian group of order $3^{2}$ and $T$ be a
Sylow $2$-subgroup of $GL(2,3)=Aut(M).$ Note that $T\cap
SL(2,3)=\left\langle y_{1}, y_{2}\right\rangle $ is a quaternion group and
acts Frobeniusly on $M$. Let $z$ be an involution in $T\setminus SL(2,3).$
Then we have $T=\left\langle y_{1}, y_{2}, z\right\rangle $ and $z$ acts on $%
\langle y_{1}, y_{2}\rangle $ such that $y_{1}^{z}=y_{1}^{-1},\,\,
y_{2}^{z}=y_{1}y_{2}.$ Let $A=MT$ and $G=M\langle y_{1}, z\rangle.$ Then $G$
is a subgroup of $A$ of index $2$ on which $A$ acts faithfully by
conjugation. If $1\neq m\in C_{M}(z)$ then the $A$-orbits in $G\setminus
\{1\}$ are represented by $m, y_{1}^{2}, y_{1}, z, zm$ and are of lengths $%
8, 9, 18, 12, 24$ respectively. It holds that $\Gamma$ is an $\mathcal{F}$%
-graph consisting of a triangle $\Gamma[ z^{A}, m^{A}, (zm)^{A}]$ together
with a tail $\Gamma[ z^{A}, (y_{1}^{2})^{A}.y_{1}^{A}]= P_{2}.$
\end{example}

\section{Final Remarks}

\begin{corollaryn}
If $G$ is a group with $\left\vert \pi (G)\right\vert \geq 2$ and $\Gamma $
is an $\mathcal{F}$-graph for some $A\leq Inn(G)$ then $\left\vert
Z(G)\right\vert =2$ and $G/Z(G)\cong S_{3}$ and $A$ contains a Sylow $2$%
-subgroup of $Inn(G)\cong S_{3}$.
\end{corollaryn}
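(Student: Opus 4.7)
The plan is to apply the main theorem and eliminate, case by case, every outcome except the one giving $G\cong D_{12}$ with the specified $A$. Two standing observations under $A\leq\mathrm{Inn}(G)$ will be used throughout: first, every $A$-orbit is contained in a single $G$-conjugacy class, and in particular $H=GA=G$ in the notation of the main theorem; second, every nontrivial element of $Z(G)$ is a singleton $A$-orbit which, being central, is a complete vertex of $\Gamma$.

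First I would rule out the no-singular-vertex case of Proposition~\ref{Propo 1}: there $G=P\times Q$ is abelian, $\mathrm{Inn}(G)=1$, so $A=1$ and every orbit is a singleton, incompatible with having the three prescribed orbits of size $>1$. The existence of a singular vertex then forces $|V(\Gamma)|\geq 4$; combined with the second observation this yields $|Z(G)|\leq 2$, since two nontrivial central elements would give two complete vertices both of degree $|V(\Gamma)|-1\geq 3$, contradicting the uniqueness of the singular vertex. Case~(1) is immediately excluded since it demands $|\overline{H}/\overline{G}|>1$ while $H=G$. Case~(2c) is excluded because $A/C_A(\overline{G})$ must embed in $\mathrm{Inn}(\overline{G})\cong A_{5}$, which contains neither $S_{4}$ nor $S_{5}$.

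The main obstacle is eliminating Cases (2a), (2b), (3), and the non-$D_{12}$ instances of Case~(4). For Case~(2b), $z\in Z(G)$ forces $z^{A}=\{z\}$ and $|Z(G)|=2$, but $G/Z(G)$ has $A_{5}\cong G/F(G)$ as a quotient; applying Proposition~\ref{Propo 2}\,(e), $\Gamma$ must consist of triangles and $P_{2}$'s joined at $z^{A}$, and counting $G$-classes of $3$-, $5$- and $2$-elements in $F(G)\setminus Z(G)$ produces too many non-singular vertices of degree $\geq 2$. For Case~(3), each of the four listed quasisimple candidates is excluded by combining the lemmas of Section~3.2 with explicit conjugacy-class structure: for instance, $SL(2,5)$ has nine $G$-conjugacy classes, and the commutativity pattern between classes of orders $3,4,5,6,10$ forces at least two vertices of degree $\geq 3$ in $\Gamma(G,A)$ for any $A\leq\mathrm{Inn}(G)$. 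For Case~(2a), Proposition~\ref{Propo 2}\,(g) forces $Q\setminus\{1\}$ to be a single $G$-class of size $q-1$; the ensuing centralizer numerics either yield $C_{F(G)}(Q)=1$ (which disconnects $\Gamma$) or contradict the degree bound outside $z^{A}$.

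This leaves Case~(4). Here $F(G)=P\times Q$ with $P=O_{p}(G)$ elementary abelian and $Q$ an elementary abelian $q$-Sylow, and $G/P$ is Frobenius with kernel $Q$ and complement $K$. Since $Q\setminus\{1\}$ is a single $G$-class and $K$ acts fixed-point-freely on $Q$, a centralizer count yields $|K|=|Q|-1$; transitivity of $K$ on $P\setminus\{1\}$ gives $|K|\geq|P|-1$. Together with Case~(4)'s constraint that $|K|$ be a prime or a cyclic $p$-group of order dividing $p^{2}$, the only consistent solution is $p=2$, $|P|=|K|=2$, $q=3$, $|Q|=3$, whence $|G|=12$, $Z(G)=P$ has order $2$, and $G/Z(G)\cong S_{3}$. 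For $Q\setminus\{1\}=\{y,y^{-1}\}$ to form a single $A$-orbit some $a\in A$ must invert $y$, which is conjugation by an element of order $2$ in $S_{3}$; thus $A$ contains a Sylow $2$-subgroup of $\mathrm{Inn}(G)$, and the $\mathcal{F}$-graph property is then confirmed by direct computation, as in the example following Proposition~\ref{Propo 6}.
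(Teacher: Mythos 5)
Your overall strategy---invoke the main theorem and eliminate cases using $H=GA=G$ and the fact that nontrivial central elements are singleton complete vertices---is the same as the paper's, and your disposal of the no-singular-vertex case, of case (1), of case (2c), and the bound $\left\vert Z(G)\right\vert \leq 2$ are all correct. But the eliminations that carry the real weight are not proofs as written, and one of your concrete claims is false. In case (4), the numerical constraints you extract ($\left\vert K\right\vert =\left\vert Q\right\vert -1$, $\left\vert K\right\vert \geq \left\vert P\right\vert -1$, $\left\vert K\right\vert$ a prime or a $p$-group with a unique subgroup of order $p$) do \emph{not} force $\left\vert G\right\vert =12$: they are satisfied, for instance, by $\left\vert P\right\vert =2$, $\left\vert Q\right\vert =5$, $\left\vert K\right\vert =4$ (a group of order $40$) and by $\left\vert P\right\vert =2$, $\left\vert Q\right\vert =9$, $\left\vert K\right\vert \cong Q_{8}$ (order $144$). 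Excluding these requires the further graph-theoretic argument the paper gives (if $\overline{x}$ has order $p^{2}$ in $\overline{G}=G/PQ$ then $x^{A},(x^{-1})^{A},(x^{p})^{A}$ would form a triangle avoiding $z^{A}$, etc.), which you do not supply. Moreover, even after a correct analysis the answer is not ``$G\cong D_{12}$'': the dicyclic group $\mathbb{Z}_{3}\rtimes \mathbb{Z}_{4}$ of order $12$ also satisfies $\left\vert Z(G)\right\vert =2$ and $G/Z(G)\cong S_{3}$, and with $A=Inn(G)$ its commuting graph of orbits is a friendship graph with two triangles (this is essentially Example 3.4 of the paper). So your identification of a unique group is wrong, even though the corollary's actual conclusion happens to survive.

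The eliminations of cases (2a), (2b) and (3) are also too thin. For (2b) you propose counting classes of $3$-, $5$- and $2$-elements ``in $F(G)\setminus Z(G)$,'' but $F(G)$ is a $2$-group there; the actual contradiction comes from elements outside $F(G)$ (e.g.\ the two $A_{5}$-classes of $5$-elements together with $z$ and the order-$10$ elements produce a clique on five vertices). For (3) you treat only $SL(2,5)$ and appeal to ``the lemmas of Section 3.2,'' which concern $Sz(8)$, $SL(2,9)$ and $SL(2,7)$ and say nothing about $2PSL(3,4)$, $2^{2}PSL(3,4)$ or $O_{p}(G)\times PSL(2,5)$. The paper's proof avoids all of this ad hoc work with one observation you are missing: under $A\leq Inn(G)$, Proposition \ref{Propo 2}\,(g) forces $N_{G}(T)$ to act transitively on $T\setminus \{1\}$ for every abelian Sylow subgroup $T$ for a prime different from $p$. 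This kills $PSL(2,7)$ and $PSL(3,4)$ quotients (Sylow $7$-normalizers induce only $\mathbb{Z}_{3}$), pins $p=5$ when $G/N\cong PSL(2,5)$, and then disposes of the remaining possibilities in a few lines. I would recommend rebuilding the hard half of your argument around that transitivity criterion rather than around orbit-size numerics.
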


\begin{proof}
If $\Gamma $ has no singular vertex then Proposition \ref{Propo 1} shows
that $G$ is abelian of order at least $6$ which is not possible. Therefore $%
\Gamma $ possesses a singular vertex. Let $p$ be the order of a
representative of the singular vertex. If $O_{p}(G)=1$, it holds by Proposition \ref
{Propos 2} that $A$ is not contained in $Inn(G)$. Hence we may also
assume that $O_{p}(G)\neq 1.$

We shall repeatedly use the fact that if $\Gamma$ is an $\mathcal{F}$-graph
for some $A\leq Inn(G)$ and a Sylow subgroup $T$ of $G$ for some prime
different from $p$ is abelian then $N_{G}(T)$ must act transitively on $%
T\setminus \{1\}.$

Suppose first that $G$ is solvable. Then the Theorem \ref{Theorem} says that the
structure of $G$ can be described as $G=PQR$ where $P=O_{p}(G),$ $Q\in
Syl_{q}(G)$ for some prime $q\neq p,\,PQ\trianglelefteq G$ and $R\in
Syl_{r}(N_{G}(Q))$ where $r\ne q.$  We observe that $R=1 
$ implies that the elements of $Z(Q)\setminus \{1\}$ lie in different $A$-orbits and
hence that $\left\vert Q\right\vert =2$ . Therefore $p$ is odd. On the other hand 
$z\in P$ and $z$ is $A$-conjugate to all the other elements in $\left\langle
z\right\rangle \setminus \{1\}$ and this conjugation must take place in the
normalizer of $\left\langle z\right\rangle $. Note that the Sylow $p$-subgroup of $%
N_{G}(\left\langle z\right\rangle )$ centralizes $\left\langle
z\right\rangle $ and $z$ has to centralize an element of order $2$ and hence a
Sylow $2$-subgroup of $G$. This forces that $N_{G}(\left\langle z\right\rangle
)=C_{G}(z)$ which is impossible. Similarly we see that $
R/C_{R}(Q)$ is of order $r$ if it is nontrivial and if $r\neq p$ then $r=2$
and $\left\vert Q\right\vert =3$ and $p\geq 5.$ As $6$ has to divide $%
\left\vert C_{G}(z)\right\vert $ it holds that $C_{P}(QR)\neq 1.$ But $%
C_{P}(QR)\setminus \{1\}\subset z^{A}$ which is not possible because any two
elements of $C_{P}(QR)$ are conjugate in $G$ if and only if they are
conjugate in $N_{G}(C_{P}(QR))=N_{P}(C_{P}(QR))QR$ which has a center
containing at least $5$ elements. Therefore $R$ is a $p$-group and $\overline{G}%
=G/PQ$ contains only one subgroup of order $p$. If there exists a $p$-element $x$ such that $|\overline{x}|=p^{2}$ in $\overline{G}$
then it is not possible that $x$, $x^{-1}$, $x^{p}$\ represent different $A$
orbits. So we obtain that $\overline{G}\cong 
Q_{8}=\left\langle a,b\right\rangle .$ Then $a,b$ and $ab$ lie in
different $A$-orbits. It follows that $p=2=\left\vert R/C_{R}(Q)\right\vert $ and
hence $\left\vert Q\right\vert =3$. Notice that $A$ has to act transitively on $%
Y=\bigcup\nolimits_{X\in Syl_{q}(G)}C_{Z(P)}(X)\setminus \{1\}.$ We see that $Y=C_{Z(P)}(Q)\setminus \{1\}$ as $C_{Z(P)}(Q)$ is
normalized by $PQR=G$. If $Y\neq
\emptyset $ then $Y\subseteq Z(G)$ as $Y\cap Z(G)\neq \emptyset.$ This yields  $\left\vert C_{Z(P)}(Q)\right\vert \leq 2.$ If there exists an
involution $y$ in $[P,Q]\cap Z(S)$ where $S\in Syl_{2}(G)$ and $u\in
S\setminus P$ then $y\notin Y$ and hence $y,yz,u,z$ represent different $A$%
-orbits which are pairwise adjacent to each other. Then $[P,Q]=1$ and hence $%
P\subseteq z^{A}\cup \{1\}$. As $P\cap Z(G)\neq 1$ we get $%
P=Z(G)=\langle z\rangle \ $and $G/Z(G)\cong QR/C_{R}(Q)\cong S_{3}$. So
either $G\cong 
\mathbb{Z}
_{2}\times S_{3}$ or $G$ has cyclic $2$-subgroup of order $4$ and a normal
subgroup of order $3$ such that $Z(G)\cong 
\mathbb{Z}
_{2}$ and $G/Z(G)\cong S_{3}.$ Clearly a Sylow $2$-subgroup of $QR$ is
contained in $A$. Here $\Gamma $ is a friendship graph with four or two $C_{3}$
's, depending on whether $A$ is a $2$-group or not.

Assume next that $G$ is nonsolvable.  By the Theorem\ref{Theorem} we see that $G$ has a normal
subgroup such that $G/N$ is isomorphic to $PSL(2,5)$ or $PSL(2,7)$ or $%
PSL(3,4)$. In the last two of these groups a Sylow $2$-subgroup is not
abelian and so $p=2$. But then in both of these groups a Sylow $7$-group $S$ is
cyclic and in both of them $N_{G}(S)/C_{G}(S)$ is not of order $6.$ Therefore we
have $G/N\cong PSL(2,5).$ If $p\neq 5$ then $5$ does not divide $\left\vert
N\right\vert $ and hence $N_{G}(T)$ does not act transitively on $T\setminus
\{1\}$ where $T\in Syl_{5}(G)$. Then we must have $p=5$ and we are in the case
Theorem\ref{Theorem} (2)(c) or (3). In the former case $A$ is not contained in $Inn(G)$ and so
we have $G=P\times PSL(2,5)$ where $P=\left\langle z^{A}\right\rangle =Z(G)$
is an elementary abelian $5$-group. This contradiction completes the proof. 
\end{proof}

\begin{corollaryn}
For no nonabelian group $G$ with $\left\vert \pi(G)\right\vert \geq2$ the
commuting graph of $G$ is an $\mathcal{F}$-graph.
\end{corollaryn}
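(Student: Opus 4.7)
The plan is to deduce this corollary from the preceding one by specializing to the trivial automorphism group $A=\{1\}$. The classical commuting graph of $G$ — whose vertex set is $G\setminus\{1\}$ with edges joining distinct commuting elements — is exactly $\Gamma(G,A)$ when $A$ is trivial, since each $A$-orbit is then a singleton $\{x\}$ for $x\in G\setminus\{1\}$ and the adjacency relation in the definition of $\Gamma(G,A)$ collapses to ordinary commutativity. The trivial subgroup satisfies $\{1\}\leq Inn(G)$, so the hypothesis of the previous corollary is available as soon as the classical commuting graph is assumed to be an $\mathcal{F}$-graph.

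So I would argue by contradiction. Assume $G$ is a nonabelian group with $|\pi(G)|\geq 2$ whose commuting graph is an $\mathcal{F}$-graph, and take $A=\{1\}$. By the previous corollary we must have $|Z(G)|=2$ and $G/Z(G)\cong S_3$, so in particular $Inn(G)\cong G/Z(G)\cong S_3$ has a Sylow $2$-subgroup of order $2$. The same corollary demands that $A$ contain such a Sylow $2$-subgroup of $Inn(G)$, yet $|A|=1$. This is an immediate contradiction and finishes the proof.

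The main — indeed only — obstacle is recognizing that the classical commuting graph of a group is precisely the instance of $\Gamma(G,A)$ with $A$ trivial; once this identification is made, the result is essentially a corollary of the corollary above and requires no further group-theoretic casework.
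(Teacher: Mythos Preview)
Your argument is clean and valid \emph{provided} ``the commuting graph of $G$'' is taken with vertex set $G\setminus\{1\}$. Under that convention your identification with $\Gamma(G,\{1\})$ is exact, the previous corollary applies with $A=\{1\}\leq Inn(G)$, and the contradiction you draw (that the trivial group would have to contain a Sylow $2$-subgroup of $Inn(G)\cong S_3$) is immediate. In that reading your proof is shorter than the paper's.

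The paper, however, uses the other standard convention: the commuting graph has vertex set $G\setminus Z(G)$. With that definition your identification $\text{(commuting graph)}=\Gamma(G,\{1\})$ holds only when $Z(G)=1$; indeed the paper's proof dispatches that case exactly by appeal to the previous corollary, and then treats $Z(G)\neq 1$ separately. There it observes that for any $x\in G\setminus Z(G)$ the coset $xZ(G)$ induces a clique, so $|Z(G)|\leq 3$, and that any vertex adjacent to one element of $xZ(G)$ is adjacent to all of them; this forces $C_G(x)\setminus Z(G)=xZ(G)$, and connectivity then gives $G=Z(G)\cup xZ(G)$, making $G$ abelian --- a contradiction.

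So the issue is definitional. If the intended object is the graph on $G\setminus\{1\}$, your proof is complete and slicker. If it is the graph on $G\setminus Z(G)$, as in the paper, then the case $Z(G)\neq 1$ is genuinely missing from your argument and cannot be recovered from the previous corollary, since that induced graph is not of the form $\Gamma(G,A)$ for any $A$. You should make explicit which convention you are using, and if it is the latter, supply the extra paragraph.
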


\begin{proof} Suppose that $G$ is a nonabelian group with $\left\vert \pi (G)\right\vert
	\geq 2.$ If $Z(G)=1$ then the commuting graph of $G$ is $\Gamma =\Gamma
	(G,1) $ and by the above corollary we see that it is not an $\mathcal{F}$%
	-graph. So $Z(G)$ is nontrivial. The commuting graph of $G$ is then $\Delta=\Gamma \lbrack G\setminus Z(G)]$.  For any  $x\in G\setminus Z(G)$ the graph $\Delta$ induces a clique on the vertex set $xZ(G)$ and hence  $|Z(G)|\leq 3.$ Furthermore if a vertex in $xZ(G)$ is adjacent to some vertex $v$ not contained in  $xZ(G)$ then every element of  $xZ(G)$ is adjacent to $v.$ This shows that $C_{G}(x)\setminus Z(G)=xZ(G)$ for any $x\in G\setminus Z(G)$ and  since $\Delta$ is connected we obtain $G=Z(G)\cup xZ(G)$ is an abelian group which is not possible.

\end{proof}

\begin{corollaryn}
$\Gamma $ is a starlike graph that is an $\mathcal{F}$-graph with no cycles
if and only if it is a star graph. If this is the case and $\Gamma $ has
at least $3$ vertices then $G$ is either a special $p$-group or is a $p$%
-group with $\Omega _{1}(Z(G))=Z(G)<G^{\prime }=Z_{2}(G)$ and $G^{\prime
\prime }=1$ and $C_{G}(y)\cap Z_{2}(G)=Z(G)$ for any $y\in G\setminus
Z_{2}(G).$ In both cases $Z_{2}(G)\setminus \{1\}$ is a union of two $A$%
-orbits.
\end{corollaryn}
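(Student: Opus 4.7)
The forward direction (a star graph is trivially a starlike $\mathcal{F}$-graph with no cycles) is immediate. For the converse, I first show $G$ must be a $p$-group: if $|\pi(G)|\geq 2$, Proposition \ref{Propo 2}(g) provides an orbit $x^A$ with $x$ a nontrivial element of a Sylow $q$-subgroup ($q\neq p$) adjacent to $z^A$; taking commuting representatives $z_1\in z^A$ and $x_1\in x^A$, the orbit $(z_1x_1)^A$ is adjacent to both, producing a triangle in $\Gamma$, a cycle, contradiction. So $G$ is a $p$-group. Since $Z(G)\neq 1$, every element of $Z(G)\setminus\{1\}$ gives a complete vertex of $\Gamma$ (central elements commute with everything). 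Two distinct complete vertices together with any third vertex form a triangle, so $Z(G)\setminus\{1\}$ is a single $A$-orbit, which I identify with $z^A$. Now $z^A$ is automatically adjacent to every other vertex, and any additional edge $u^A\sim v^A$ between non-central vertices combined with $z^A\sim u^A$ and $z^A\sim v^A$ produces a triangle, contradicting the no-cycles assumption. Hence $\Gamma=K_{1,n}$ is a star.

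With $\Gamma=K_{1,n}$ ($n\geq 2$) and $Z(G)\setminus\{1\}=z^A$ established, I observe that for any leaf orbit $y^A$, $C_G(y)\subseteq Z(G)\cup y^A$: any $g\in C_G(y)\setminus Z(G)$ has $g^A\sim y^A$, and since the only neighbor of a leaf in a star is $z^A$, the alternative $g^A=z^A$ forces $g\in Z(G)$, a contradiction, so $g^A=y^A$. Applying this to $y^p\in C_G(y)$, which when nontrivial has strictly smaller order than $y$ and hence cannot be $A$-conjugate to $y$, yields $y^p\in Z(G)$. The same reasoning applied to $z$ itself shows $z^p=1$ (otherwise $(z^p)^A$ would be a distinct orbit in $Z(G)\setminus\{1\}$). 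Thus $Z(G)$ is elementary abelian and $G^p\leq Z(G)$.

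Two cases then arise according to the nilpotency class of $G$. In the first, $G'\leq Z(G)$ (class at most $2$); combined with $G^p\leq Z(G)$, this gives $\Phi(G)\leq Z(G)$, and the equality $Z(G)=G'=\Phi(G)$ characterising a special $p$-group follows from the star shape of $\Gamma$ forcing $G'$ to fill out $Z(G)$. In the second, $G'\not\leq Z(G)$ (class $3$); since $G'\leq Z_2(G)$ always holds for class $3$, I verify $G'=Z_2(G)$ via minimality using that $Z_2(G)$ is characteristic and $A$-invariant, and $G''=1$ because $[G',G']\leq [Z_2(G),G]\leq Z(G)$, together with the star structure ruling out nontrivial commutators inside $G'$ producing further leaf-leaf edges. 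The centralizer condition $C_G(y)\cap Z_2(G)=Z(G)$ for $y\notin Z_2(G)$ is then immediate from $C_G(y)\subseteq Z(G)\cup y^A$ combined with $A$-invariance of $Z_2(G)$ (which forces $y^A\cap Z_2(G)=\emptyset$). The final assertion that $Z_2(G)\setminus\{1\}$ is a union of two $A$-orbits, namely $z^A$ and a single non-central orbit contained in $Z_2(G)$, is the main obstacle: it amounts to showing that $A$ acts transitively on $Z_2(G)\setminus Z(G)$, which is the sharpest structural consequence of the $K_{1,n}$-shape and requires analysing the $A$-orbits in the section $Z_2(G)/Z(G)$ using that any two non-$A$-conjugate elements of $Z_2(G)\setminus Z(G)$ would give distinct leaf orbits whose centralisers intersect $Z_2(G)$ only in $Z(G)$.
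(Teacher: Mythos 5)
Your reduction to a $p$-group, the identification of $Z(G)\setminus\{1\}$ as a single $A$-orbit forming the centre of the star, the containment $C_G(y)\subseteq Z(G)\cup y^A$ for noncentral $y$, and the deduction $\mho^1(G)\leq Z(G)=\Omega_1(Z(G))$ all agree with the paper's argument. The gap is in the structural dichotomy. You split according to whether the class is $2$ or $3$, and in the second case you justify $G'\leq Z_2(G)$ by saying it "always holds for class $3$" --- but nothing in your argument excludes class $\geq 4$, so the case division is not exhaustive and the appeal to class $3$ is circular. The missing step, which is the crux of the paper's proof, is this: for $x\in Z_2(G)\setminus Z(G)$ the map $g\mapsto[g,x]$ is a homomorphism of $G$ into $Z(G)$ with kernel $C_G(x)$, so $C_G(x)\trianglelefteq G$ and $G/C_G(x)$ embeds in the abelian group $Z(G)$, whence $G'\leq C_G(x)$; combined with $C_G(x)\subseteq Z(G)\cup x^A\subseteq Z_2(G)$ (using $A$-invariance of $Z_2(G)$), this gives $G'\leq C_G(x)\leq Z_2(G)$ unconditionally, hence class at most $3$.

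That one inclusion also delivers everything you leave open. If $Z(G)<G'$, then $G'\setminus Z(G)$ is a nonempty $A$-invariant subset of $C_G(x)\setminus Z(G)\subseteq x^A$, so $G'=\{1\}\cup z^A\cup x^A=C_G(x)$ for every $x\in Z_2(G)\setminus Z(G)$; this forces $Z_2(G)\setminus Z(G)\subseteq x^A$, hence $G'=Z_2(G)$ and the two-orbit decomposition you flag as "the main obstacle," and it shows that $G'$ centralizes each of its own elements, i.e. $G''=1$ (your estimate $[G',G']\leq[Z_2(G),G]\leq Z(G)$ only yields $G''\leq Z(G)$, and "ruling out leaf-leaf edges" is not an argument). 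In the special case, the equality $G'=Z(G)$ needs to be said precisely: $1\neq G'\cap Z(G)$, $G'$ is $A$-invariant, and $A$ is transitive on $Z(G)\setminus\{1\}$, so $Z(G)\leq G'$; "the star shape forcing $G'$ to fill out $Z(G)$" gestures at this but is not a proof. (For what it is worth, the paper's own proof establishes the final two-orbit assertion only in the non-special case, where $Z_2(G)=G'$; in the special case $Z_2(G)=G$ and the assertion as literally stated is in tension with a star having more than two leaves, as in the paper's extraspecial example of order $27$.)
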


\begin{proof}
If $\Gamma $ has no cycles, then it is a tree and so $G$ is a $p$-group for
some prime $p$ by Proposition \ref{Propo 2} (f)$.$ Therefore $\Gamma $ has a complete
vertex and has $\left\vert V\right\vert -1$ pendant vertices, that is, all
the rays of the graph are of length $1$. This proves the first claim. Assume now that $G$ is a $p$-group for some prime $p$ and has a group of
automorphisms $A$ such that $\Gamma $ is an $\mathcal{F}$-graph with no
cycles.

If $G$ is abelian then $\Gamma $ is a complete graph and hence has at most
two vertices. So we can assume that $G$ is nonabelian. This implies the existence of a vertex $x^{A}\subseteq G\setminus Z(G)$. Hence $Z(G)\setminus \{1\}=\{v\}$ is a
vertex and any other vertex is adjacent only to $v$. It follows that for any $x\in
G\setminus Z(G)$ and any $y\in C_{G}(x)\setminus Z(G)$ we have $x^{A}=y^{A},$
that is $C_{G}(x)\setminus Z(G)$ is an $A$-orbit for any $x\in G\setminus
Z(G).$ Now $G^{\prime }\leq C_{G}(x)\leq Z_{2}(G)$
for any $x\in Z_{2}(G)\setminus Z(G)$ because in this \ case we have $%
C_{G}(x)\trianglelefteq G$ and $G/C_{G}(x)$ is isomorphic to a subgroup of $%
Z(G)$ and both $C_{G}(x)$ and $Z_{2}(G)$ are $A$-invariant. Furthermore as $%
Z(G)\cap G^{\prime }\neq 1$ it holds that $Z(G)\leq G^{\prime }$. So have either $G^{\prime }=Z(G)=\Phi (G)$ as $G/Z(G)$ is of exponent $p$ or $%
Z(G)<G^{\prime }$ hence $G^{\prime }=C_{G}(x)$ for any $x\in
Z_{2}(G)\setminus Z(G).$ This implies in the second case that $G^{\prime
}=Z_{2}(G)=\{1\}\cup z^{A}\cup x^{A}$ for some $z\in Z(G)\,$and $x\in
G^{\prime }\setminus Z(G).$ In both cases we have $G^{\prime \prime }=1,$ $%
\Omega _{1}(Z(G))=Z(G)$ and if $y\in G\setminus Z_{2}(G)$ then $C_{G}(y)\cap
Z_{2}(G)=Z(G).$
\end{proof}

\begin{example}
Let $G=\left\langle a,b\right\rangle $ be the extraspecial group of order $%
3^{3}$ and exponent $3$ $.$ Let $B$ $=\left\langle x,t\right\rangle \cong
D_{8}$ acts on $G$ as follows: $x$ centralizes $Z(G)$, $t$ inverts $Z(G)$%
,\thinspace\ $a^{x}=b^{-1}$,\thinspace\ $b^{x}=a$,\thinspace\ $a^{t}=a^{-1}$%
,\thinspace\ $b^{t}=b$. Let $%
C=\langle x^{2},t\rangle $. Then $\Gamma (G,A_{1})=P_{3}$ where $%
A_{1}=Inn(G)B$\,\, and 
$\Gamma (G,A_{2})$ is a star graph with $4$ vertices where $A_{2}=Inn(G)C$.
\end{example}

\begin{corollaryn}
$\Gamma$ is a triangle free $\mathcal{F}$-graph if and only if it is a star
graph.
\end{corollaryn}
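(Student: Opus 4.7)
The $(\Leftarrow)$ direction is immediate. For the forward direction I would split into the no-singular-vertex and singular-vertex cases. In the no-singular-vertex case Proposition \ref{Propo 1}(i) leaves only $\Gamma\in\{P_{1},P_{2},P_{3},C_{3}\}$, and triangle-freeness rules out $C_{3}$, so each remaining option is already a star.

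Assume now that $\Gamma$ has a singular vertex $z^{A}$, with $p$ a prime dividing $|z|$; note that $|V(\Gamma)|\geq 4$ because $\deg(z^{A})\geq 3$. My plan is to show, in turn, that (i) $G$ is a $p$-group, (ii) $z$ may be chosen to lie in $Z(G)$, making $z^{A}$ a complete vertex, and (iii) triangle-freeness then collapses $\Gamma$ to the star $K_{1,|V(\Gamma)|-1}$.

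For (i), if $|\pi(G)|\geq 2$ then the standing hypothesis of Section~3 applies, so I would fix a prime $q\neq p$ in $\pi(G)$, a Sylow $q$-subgroup $Q$, and $x\in\Omega_{1}(Z(Q))$. Proposition \ref{Propo 2}(g) then furnishes commuting representatives $x_{1}\in x^{A}$ and $z_{1}\in z^{A}$ of orders $q$ and $p$; the product $x_{1}z_{1}$ is nonidentity of order $pq$, so $(x_{1}z_{1})^{A}$ is a vertex distinct from $x^{A}$ and $z^{A}$ by order and adjacent to each of them, producing the forbidden triangle $\{x^{A},z^{A},(x_{1}z_{1})^{A}\}$. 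Hence $G$ is a $p$-group. For (ii), $Z(G)$ is nontrivial and $A$-invariant; if $z^{A}\cap Z(G)=\emptyset$ then for $v\in Z(G)\setminus\{1\}$ the vertex $v^{A}\neq z^{A}$ is adjacent to every other vertex, so $\deg(v^{A})=|V(\Gamma)|-1\geq 3$, contradicting the uniqueness of the singular vertex. So $z\in Z(G)$ and $z^{A}$ is complete. Step (iii) is then immediate: any further edge $u^{A}\sim w^{A}$ with $u^{A},w^{A}\neq z^{A}$ would together with the edges to $z^{A}$ produce a triangle.

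The only place where care is needed is step (ii): once $G$ is a $p$-group the standing hypothesis of Section~3 no longer formally applies, so the implication "$z\in Z(G)\Rightarrow z^{A}$ complete" has to be read off directly from the definition of $\Gamma$ rather than invoked from Proposition \ref{Propo 2}(k).
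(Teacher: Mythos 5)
Your proof is correct and follows essentially the same route as the paper: rule out the no-singular-vertex case via Proposition \ref{Propo 1}, use Proposition \ref{Propo 2}(g) to manufacture a triangle when $\left\vert \pi(G)\right\vert \geq 2$, and then exploit the complete central vertex in the resulting $p$-group. The only cosmetic difference is that the paper concludes by showing $\Gamma$ has no cycles and invoking the preceding corollary on starlike graphs, whereas you identify the complete vertex with $z^{A}$ directly and read off the star structure; your version also spells out the triangle in the $\left\vert \pi(G)\right\vert \geq 2$ case, which the paper leaves implicit.
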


\begin{proof}
Suppose that $\Gamma$ is an $\mathcal{F}$-graph. To prove the claim, one
needs only to show that $\Gamma$ has no cycles in case it has no triangles. If $%
\Gamma$ has no singular vertex then $\Gamma=P_{n}$ for some $n\leq3$ by Proposition \ref{Propo 1} and hence is a star graph. If $%
\left\vert \pi(G)\right\vert \geq2$ then $\Gamma$ contains certainly a
triangle. Then $G$ is a $p$-group and hence $\Gamma$ has a complete vertex. 
It follows that $\Gamma$ has a triangle if it contains a cycle.
\end{proof}

\begin{corollaryn}
Suppose that $A$ acts coprimely on the group $G$ in such a way that $\Gamma$ is
an $\mathcal{F}$-graph. Then either $G$ is a $p$-group for some prime $p$, or 
$\Gamma=C_{3}$ and there are infinitely many different examples with $%
\Gamma=C_{3}$ and $(\left\vert G\right\vert ,\left\vert A\right\vert )=1.$
\end{corollaryn}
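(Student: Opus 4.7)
The plan is to deduce the corollary from the Main Theorem by using the coprime hypothesis to eliminate every singular-vertex case. If $|\pi(G)|=1$ then $G$ is a $p$-group and we are done, so assume $|\pi(G)|\ge 2$. By Proposition \ref{Propo 1}, if $\Gamma$ has no singular vertex then $\Gamma=C_3$ and $G=P\times Q$ with $P$ and $Q$ elementary abelian, which is the desired conclusion. It therefore suffices to prove that under the coprime hypothesis $\Gamma$ cannot possess a singular vertex. The uniform tool is this: by coprime-action Sylow theory, $G$ has an $A$-invariant Sylow $q$-subgroup $Q^*$ for every $q\in\pi(G)$; combined with Proposition \ref{Propo 2}(g), for $q\neq p$ the set $Q^*\setminus\{1\}$ coincides with the single $A$-orbit $x^A$, so $|Q^*|-1$ divides $|A|$ and is therefore coprime to $|G|$.

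I would then walk through each case of Theorem \ref{Theorem}. In cases (1)(a) and (1)(b), $F(G)$ is the unique Sylow $q$-subgroup of $G$ and $|F(G)|-1\in\{8,48,80\}$, each sharing the prime $2$ with $|G|$. In cases (2)(b), (2)(c), (3), and also $G=O_p(G)\times E(G)$, the Sylow structure of $\overline{G}\cong A_5$ or of the quasisimple layer always supplies a prime $q\neq p$ with $|Q^*_q|-1$ divisible by $2$ or $3$, both of which divide $|G|$. In case (4), the Frobenius action of the complement of $F(G)/P$ on the elementary abelian $Q$ forces either $p$ or the prime $s$ of the complement to divide $|Q|-1=q^r-1$, and each of these divides $|G|$. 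The subtlest case is (2)(a). If the complement of $\overline{M}\cong\mathbb{Z}_q$ in $\overline{G}$ is a nontrivial $p$-group, Frobenius action forces $p\mid q-1$, and since $(q-1)\mid|A|$ this contradicts $(|A|,|G|)=1$. If the complement is trivial, so $G=P\rtimes Q$, the $A$-orbits on $P\setminus\{1\}$ are pairwise adjacent in $\Gamma$ and thus form a clique in $\Gamma$, bounded by three by the $\mathcal{F}$-graph condition; the Fitting decomposition $P=C_P(Q^*)\oplus[P,Q^*]$, with both summands nontrivial (as $z\in Z(G)=C_P(Q^*)$ and $C_G(F(G))\le F(G)$ forces $[P,Q^*]\neq 0$), forces at least three orbits, one of which must equal $[P,Q^*]\setminus\{1\}$. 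Since $q$ divides $|[P,Q^*]|-1=p^{m-k}-1$, this orbit size yields $q\mid|A|$, the desired contradiction.

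For the infinite family of coprime examples, fix $p=3$ and let $q$ range over primes $q>3$ with $q\equiv 2\pmod 3$; there are infinitely many such $q$ by Dirichlet's theorem. Set $G=\mathbb{Z}_3\times\mathbb{Z}_q$ and let $A=\mathbb{Z}_3^{\times}\times\mathbb{Z}_q^{\times}$ act by scalar multiplication on each factor. The three $A$-orbits on $G\setminus\{1\}$ are $\mathbb{Z}_3^{\times}\times\{0\}$, $\{0\}\times\mathbb{Z}_q^{\times}$, and $\mathbb{Z}_3^{\times}\times\mathbb{Z}_q^{\times}$; they are pairwise adjacent in $\Gamma$ because $G$ is abelian, hence $\Gamma(G,A)=C_3$. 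Coprimality $(|A|,|G|)=\gcd(2(q-1),3q)=1$ follows since $q$ is odd, $q\not\equiv 1\pmod 3$, and $\gcd(q-1,q)=1$. The main obstacle is the orbit-decomposition argument in case (2)(a) with trivial complement, where the $\mathcal{F}$-graph clique bound must be combined with the Fitting decomposition to pin down exactly three $A$-orbits on $P\setminus\{1\}$ before the contradicting divisibility $q\mid p^{m-k}-1$ can be extracted.
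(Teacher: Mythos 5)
Your proposal is correct in outline and takes essentially the same route as the paper: reduce via the Main Theorem, observe that for an $A$-invariant Sylow $q$-subgroup $Q^{*}$ with $q\neq p$ the set $Q^{*}\setminus\{1\}$ is a single $A$-orbit so that $|Q^{*}|-1$ divides $|A|$, eliminate every singular-vertex case by producing a common prime divisor of $|G|$ and $|A|$, and finish with the $C_{3}$ description and an infinite coprime family (your $\mathbb{Z}_{3}\times\mathbb{Z}_{q}$ with $q\equiv 2\pmod 3$ is a valid instance of the paper's general $P\times Q$ construction). The paper compresses your case-by-case work: outside of ``$G$ solvable with $O_{p}(G)\neq 1$'' it simply notes that $2$ divides $(|G|,|A|)$, and in the remaining cases it uses that every prime dividing the order of the Frobenius complement of $G/O_{p}(G)$ divides $|M/O_{p}(G)|-1$, which divides $|A|$. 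The one place where you go beyond the paper is case (2)(a) with trivial complement, where the Frobenius-complement divisibility is vacuous; your clique-plus-Fitting-decomposition patch is the right idea, but as written it tacitly assumes $F(G)$ abelian (needed both for the pairwise adjacency of the $A$-orbits in $F(G)\setminus\{1\}$ and for $P=C_{P}(Q^{*})\oplus[P,Q^{*}]$), which the statement of case (2)(a) does not guarantee, and $z$ need not lie in $Z(G)$ --- the nontriviality of $C_{P}(Q^{*})$ should instead be extracted from Proposition \ref{Propo 2} $(g)$. These points are repairable, and the paper's own proof is no more explicit on this subcase.
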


\begin{proof}
Suppose that $\left\vert \pi (G)\right\vert \geq 2$. Assume first that $%
\Gamma $ has an isolated vertex. Let $p$ be the order of a representative of
a complete vertex of $\Gamma .$ By the Theorem \ref{Theorem} we only  need to consider the case
that $G$ is solvable and $O_{p}(G)\neq 1$, because otherwise $2$ divides $%
(\left\vert G\right\vert ,\left\vert A\right\vert )$. Thus we may assume that $%
G/O_{p}(G)$ is a Frobenius group with an elementary abelian kernel $%
M/O_{p}(G)$ so that $A$ acts transitively on the set of nontrivial elements of $%
M/O_{p}(G)$. Then $\left\vert M/O_{p}(G)\right\vert -1$ divides $\left\vert
A\right\vert $. Every prime dividing the order of the
Frobenius complement of $G/O_{p}(G)$ divides also $%
\left\vert M/O_{p}(G)\right\vert -1$ and hence divides $(\left\vert
G\right\vert ,\left\vert A\right\vert )$. This shows that $\Gamma $ has no
isolated vertex. Now appealing to the Theorem\ref{Theorem} we see that $G=P\times Q$ where $P$ and $Q$ are
elementary abelian $p$- and $q$-groups for two distinct primes $p$ and $q.$ We also know that both $\left\vert P\right\vert -1$ and $\left\vert Q\right\vert
-1$ divide $\left\vert A\right\vert $
as $A$ acts transitively on both $P\setminus \{1\}$ and $Q\setminus \{1\}$. On the other hand there exists a
subgroup $A$ of $Aut(G)$ given as $A=A_{1}\times A_{2}$ \ with $\left\vert
A_{1}\right\vert =$ $\left\vert P\right\vert -1$ and $\left\vert
A_{2}\right\vert =\left\vert Q\right\vert -1$ so that $A_{1}$ acts
Frobeniusly on $P$ and centralizes $Q$, and $A_{2}$ acts Frobeniusly on $Q$
and centralizes $P$. So if the primes $p$ and $q$ and the positive
integers $n$ and $m$ are chosen in such a way that $p$ does not divide $%
q^{m}-1$ and $q$ does not divide $p^{n}-1$ then there exists a nilpotent
group $G$ of order $p^{n}q^{m}$ with elementary abelian Sylow subgroups and $%
A\leq Aut(G)$ such that $\Gamma=C_{3}$.
\end{proof}

Finally, we list some questions we would like to see them answered.\\

\begin{itemize}
\item [Q.1] What can be said about a $p$-group $G$ admitting a group $A$ of
automorphisms such that $\Gamma $ is an $\mathcal{F}$-graph?

\item [Q.2] Do the cases (b) and (c) in the Proposition \ref{Propo 7} really occur?

\item [Q.3] Do the quasisimple groups $2PSL(3,4)$ and $2^{2}PSL(3,4)$ have really
a group of automorphisms $A$ such that $\Gamma $ is an $\mathcal{F}$-graph?

\item [Q.4] What can be said about the groups $G$ such that $\Gamma $ is planar
for some $A\leq Aut(G)$?
\end{itemize}

\end{document}